\documentclass[12pt, twoside]{article}

\usepackage{amsmath, amssymb, amsthm, physics, url, bbm, mathrsfs, cite, xcolor, mathtools, dsfont, esint}

\usepackage[shortlabels]{enumitem}
\usepackage[title]{appendix}

\mathtoolsset{showonlyrefs}
\usepackage[utf8]{inputenc}

\definecolor{darkblue}{RGB}{32,57,231}
\usepackage[colorlinks,
	citecolor=darkblue, 
	linkcolor=darkblue, 
	urlcolor=darkblue,
	bookmarks=false,
	pdfauthor={Aobo Chen}, 
	pdftitle={Walk dimension and vanishing curve modulus in metric measure spaces}
]{hyperref}

\usepackage{orcidlink}

\numberwithin{equation}{section}
\numberwithin{figure}{section}

\newtheorem{theorem}{Theorem}[section]
\newtheorem{lemma}[theorem]{Lemma}
\newtheorem{proposition}[theorem]{Proposition}
\newtheorem{corollary}[theorem]{Corollary}

\theoremstyle{definition}
\newtheorem{definition}[theorem]{Definition}
\newtheorem{remark}[theorem]{Remark}
\newtheorem{notation}[theorem]{Notation}

\def\dist{{\mathop {{\rm dist}}}}
\def\diam{{\mathop{{\rm diam }}}}
\newcommand*{\dif}{\mathop{}\!\mathrm{d}}
\newcommand{\supp}{\mathrm{supp}}
\newcommand{\one}{\mathds{1}}

\def\sD {{\mathcal D}} \def\sE {{\mathcal E}} \def\sF {{\mathcal F}}
 \def\sH {{\mathcal H}}

 \def\bN {{\mathbb N}} 
  \def\bR {{\mathbb R}}

 \def\bZ {{\mathbb Z}}

  \def\rF {{\mathscr F}}

\def\ol{\overline}
\def\wt{\widetilde}
\def\wh{\widehat}

\newcommand{\loc}[0]{\operatorname{loc}}
\newcommand{\set}[1]{\left\{ #1 \right\}}
\newcommand{\Sett}[2]{\left\{ #1  : \, #2 \right\}}

\newcommand\restr[2]{{
		\left.\kern-\nulldelimiterspace 
		#1 
		\vphantom{\big|}
		\right|_{#2}
}} 

\def\Mod{\operatorname{Mod}}
\def\Lip{\operatorname{Lip}}
\newcommand{\carre}[1]{
	\Gamma_{p}\!\left\langle{#1}\right\rangle
		}
\newcommand{\lip}[2]{\mathrm{Lip}_{#1}{[ #2 ]}}

\font\titlefont=cmbx12 scaled 1400
	\title{\titlefont Walk dimension and vanishing curve modulus in metric measure spaces}
\date{\today}
\author{Aobo Chen}

\begin{document}

\maketitle

\vspace{-22pt}
\begin{abstract}
On a regular local $p$-Dirichlet space supporting a Poincar\'{e} inequality and a capacity upper bound, we characterize the existence of curve families of positive $p$-modulus in terms of the small-scale behaviour of the scaling function. As an application, we show that, on an Ahlfors-regular metric measure space carrying a strongly local regular Dirichlet form with sub-Gaussian heat kernel estimates, the comparison between the walk dimension and $2$ determines whether the given metric attains its Ahlfors-regular conformal dimension.
	\vskip0.2cm
\noindent {\it Keywords:} Modulus of curve family, walk dimension, Ahlfors-regular conformal dimension, Poincar\'{e} inequality
	\vskip0.2cm 
\noindent {\it Mathematical Subject Classification (2020):} 30L10, 31E05, 51F99, 31C25, 46E36
\end{abstract}

\section{Introduction}

In the Newtonian theory on metric measure spaces, the $p$-modulus of curve families provides a quantitative way to measure the abundance of rectifiable curves, and it plays an essential role in the study of Poincar\'{e} inequalities, quasiconformal geometry, and conformal dimension. On spaces containing sufficiently many curves, the existence of a curve family of positive $p$-modulus often reflects compatibility between first-order $p$-energy and the underlying metric geometry. One motivation for studying such curve families is that positive modulus can yield lower bounds for conformal dimension; see, for example, \cite[Section~5.3]{Tys99}, \cite[Theorem~15.10]{Hei01}, \cite[Theorem~1.0.1]{KL04}, and \cite[Proposition~4.1.8]{MT10}. 

In this work, we consider a metric measure space $(X,d,m)$, where $\#X\geq2$, $(X,d)$ is a locally compact separable metric space and $m$ is a nontrivial locally finite Borel-regular measure on $(X,d)$. Our first objective is to study the following modulus-vanishing phenomenon for spaces equipped with a local $p$-energy form.

\begin{definition}
A metric measure space $(X, d, m)$ is said to satisfy the \emph{vanishing $p$-modulus property} \ref{VMp}\hypertarget{VMp}{}, if \begin{equation}\label{VMp}\tag*{$\mathrm{VM}_{p}$}
\Mod_{p}(\Gamma)=0 \ \text{for every family of curves $\Gamma$ in $(X,d)$}.
\end{equation}
\end{definition}

On certain fractals, such as planer Sierpi\'{n}ski carpets, \ref{VMp} holds for every $p\in[1,\infty)$; equivalently, every curve family has zero modulus. See \cite[Théorème~5.2]{LKP04} and \cite[Proposition~4.3.3]{MT10}. Such fractals have holes at many scales, and their functional inequalities and Brownian motions often behave quite differently from those in Euclidean spaces. In particular, their natural \emph{space-time scaling exponent}, also called the \emph{walk dimension}, is typically larger than the Euclidean exponent. Informally, the walk dimension $\beta$ describes the speed of the associated diffusion: the mean exit time from a ball of radius $r$ is typically comparable to $r^\beta$; see \cite[Lemma~3.9-(c)]{Bar98}. It also governs the Hausdorff dimension of the range of the diffusion, which is given by the minimum of the Hausdorff dimension of the ambient space and $\beta$; see \cite[Lemma~3.27]{Bar98}.

Our main result places this phenomenon in the general framework of metric measure spaces equipped with local $p$-energy forms. Under suitable structural assumptions, it gives a dichotomy for the existence of curve families of positive $p$-modulus in terms of the small-scale behaviour of the scale function. The conditions appearing in the statement are recalled in Section~\ref{s.pre}.

\begin{theorem}\label{t.main}
	Let $(X,d)$ be a complete metric space. Let $p\in(1,\infty)$. Suppose there exists a regular local $p$-Dirichlet space $(X,d,m,\sE_{p},\sF_{p},\Gamma_{p})$ that satisfies \ref{VD} and $\hyperlink{PI}{\mathrm{PI}_{p}(\Psi)}$.
	\begin{enumerate}[label=\textup{({\arabic*})},align=right,leftmargin=*,topsep=5pt,parsep=0pt,itemsep=2pt]
	\item\label{it.main1} If the scale function $\Psi$ satisfies \begin{equation}\label{e.ma1}
		\liminf_{r\downarrow 0}\frac{\Psi(r)}{r^{p}}=0,
	\end{equation}
	then the condition \ref{VMp} holds for $(X,d,m)$.
	\item\label{it.main2} Suppose in addition that $(X,d)$ satisfies the chain condition, and that \hyperlink{cap<}{$\operatorname{Cap}_{p}(\Psi)_{\leq}$} holds. If the scale function $\Psi$ satisfies \begin{equation}
		\liminf_{r\downarrow 0}\frac{\Psi(r)}{r^{p}}>0,
	\end{equation}
then the condition \ref{VMp} fails for $(X,d,m)$.
\end{enumerate}
\end{theorem}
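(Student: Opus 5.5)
For part (1), I will show that every Lipschitz function has zero $p$-energy density in a suitable averaged sense, and then transfer this to curves. The natural route is to compare the $p$-energy measure $\Gamma_p\langle u\rangle$ with upper gradients. Fix a curve family $\Gamma$. It suffices to treat families of nonconstant rectifiable curves lying in a fixed ball $B$ and having length at least some $\delta>0$, since $\Gamma$ is a countable union of such families.

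The key step uses $\mathrm{PI}_p(\Psi)$ at small scales. For any $u\in\sF_p$ with a bounded upper gradient $g$, a Poincar\'e inequality on balls of radius $r$ reads
\begin{equation}
\fint_{B(x,r)}|u-u_{B}|^p\,dm\le C\,\frac{\Psi(r)}{m(B(x,\lambda r))}\,\Gamma_p\langle u\rangle(B(x,\lambda r)).
\end{equation}
I will run this in the reverse direction. Take $u$ to be the truncated distance function $u=\min\{d(x_0,\cdot),r_0\}$, which is $1$-Lipschitz and, by regularity, lies in $\sF_{p,\mathrm{loc}}$ at least after a cutoff. The oscillation of $u$ on $B(x,r)$ with $d(x_0,x)\asymp r_0/2$ is comparable to $r$, so the left-hand side is $\gtrsim r^p$. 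Along a sequence $r_k\downarrow0$ with $\Psi(r_k)/r_k^p\to0$, this forces
\[
\frac{\Gamma_p\langle u\rangle(B(x,\lambda r_k))}{m(B(x,\lambda r_k))}\to\infty,
\]
uniformly in $x$, which contradicts finiteness of the energy once we sum over a bounded-overlap cover using \ref{VD}. So this naive approach actually yields a contradiction unless \ref{VMp} holds in a stronger form.

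The cleaner approach is the following. If $\Mod_p(\Gamma)>0$ for some family, then by a Fuglede-type argument the space supports enough curves that, for Lipschitz $u$, the minimal $p$-weak upper gradient $g_u$ is nontrivial. In particular, the distance function has $g_u\ge c>0$ on a set of positive measure; this follows from integrating $\rho=g_u$ along the curves, where positive modulus forces $\int g_u^p>0$ on some ball. The main obstacle is to relate $\Gamma_p\langle u\rangle$ to $g_u^p\,dm$ with constants, specifically to get $\Gamma_p\langle u\rangle\ge c\,g_u^p\,m$ from $\mathrm{PI}_p(\Psi)$ combined with the scaling of $\Psi$. I would try to prove a lower bound of the form $\liminf \frac{r^p}{\Psi(r)}\cdot(\text{energy ratio})$ via Lebesgue differentiation of the Poincar\'e inequality at density points. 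Under \eqref{e.ma1}, the PI then forces $\Gamma_p\langle u\rangle$ to have infinite density on that positive-measure set, which is a contradiction.

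For part (2), I aim to exhibit a family of positive modulus. Using the chain condition and $\operatorname{Cap}_p(\Psi)_\le$ together with $\Psi(r)\gtrsim r^p$ for small $r$, I will show that $\Gamma_p\langle u\rangle\le C\,\mathrm{Lip}(u)^p\,m$ for Lipschitz $u$. This is done by covering and capacity cutoffs, comparing $\Psi(r)\gtrsim r^p$ with the $r^{-p}$ cost of a Lipschitz cutoff. With PI, this gives a $(1,p)$-Poincar\'e inequality in the Heinonen--Koskela sense with upper gradients. The upper bound of $\Psi$ at large scales relative to $r^p$ should follow from the chain condition; this point needs care. Then the standard result (Heinonen--Koskela) that a $p$-PI space has positive modulus for the family of curves joining two disjoint balls shows that \ref{VMp} fails.
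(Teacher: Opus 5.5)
Your part~(1) has a genuine gap. Every version of your argument evaluates $\Gamma_p\langle u\rangle$ for a \emph{Lipschitz} $u$: either the truncated distance function or a Lipschitz function with nontrivial minimal weak upper gradient. Regularity, however, only says that $\mathcal{F}_p\cap C_c(X)$ is uniformly dense in $C_c(X)$. It does not place Lipschitz functions in $\mathcal{F}_{p,\mathrm{loc}}$.

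In fact your ``naive'' computation essentially shows that, when $\liminf_{r\downarrow0}\Psi(r)/r^p=0$, the truncated distance function \emph{cannot} lie in $\mathcal{F}_{p,\mathrm{loc}}$: summing the reversed Poincar\'e bound over a bounded-overlap cover forces energy $\gtrsim r_k^p/\Psi(r_k)\to\infty$. That is all the contradiction proves, and it says nothing about curves. The same objection defeats your ``cleaner approach''. An infinite density of $\Gamma_p\langle u\rangle$ is no contradiction when $u\notin\mathcal{F}_{p,\mathrm{loc}}$, and that is exactly what happens in this regime.

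There is also a directional problem. $\mathrm{PI}_p(\Psi)$ bounds oscillation \emph{above} by energy. To kill modulus you must instead bound a weak upper gradient \emph{above} by oscillation, roughly $\int g_u^p\,dm\lesssim\liminf_{\delta\downarrow0}\delta^{-p}\cdot(\text{mean $p$-oscillation at scale }\delta)$. Without an upper-gradient Poincar\'e inequality this is not a Lebesgue-differentiation fact. It is the $\Gamma$-liminf inequality for discrete energies from \cite{ACDM15}, and that is the missing ingredient.

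The paper argues in the opposite direction, in two steps:
\begin{enumerate}
\item Fix $u\in\mathcal{F}_p$ and use the cell partition and discrete energies $\mathscr{F}_\delta$ of \cite{ACDM15}. From $\mathrm{PI}_p(\Psi)$, volume doubling and bounded overlap one gets $\mathscr{F}_\delta(u)\lesssim\Psi(\delta)\delta^{-p}\mathcal{E}_p(u)$. Along $\delta_k\downarrow0$ with $\Psi(\delta_k)/\delta_k^p\to0$ this gives $\mathrm{Ch}_p(u)\le\eta\liminf_k\mathscr{F}_{\delta_k}(u)=0$.
\item Take $\varphi\in\mathcal{F}_p$ with $\varphi=1$ on a compact $K$ and $\varphi=0$ off a relatively compact open $G$ with $K\Subset G$. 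Then $\operatorname{Mod}_p(K,X\setminus G)=\widehat{\mathrm{cap}}_p(K,X\setminus G)\le\mathrm{Ch}_p(\varphi)=0$. A countable base and subadditivity of $\operatorname{Mod}_p$ finish the proof.
\end{enumerate}

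Your part~(2) follows the paper's overall route: Lipschitz functions in the domain with controlled energy density, then a Lipschitz Poincar\'e inequality at small scales, then a Heinonen--Koskela argument. Three steps need to be stated differently.
\begin{enumerate}
\item A bound $\Gamma_p\langle u\rangle\le C\,\mathrm{Lip}(u)^p\,m$ with the \emph{global} Lipschitz constant only yields a trivial Poincar\'e inequality. You need the pointwise bound $d\Gamma_p\langle u\rangle/dm\le C\,\mathrm{Lip}_d[u]^p$ for locally Lipschitz $u$. The paper obtains membership in $\mathcal{F}_{p,\mathrm{loc}}$ and this bound from a cutoff Sobolev inequality implied by $\mathrm{PI}_p(\Psi)$ and $\mathrm{Cap}_p(\Psi)_{\le}$ \cite{EB26}, combined with \cite{Yan25b}. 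Your partition-of-unity heuristic points the same way, but it needs product and quotient rules for $\Gamma_p$ and a localization via locality.
\item The comparison you actually need is $\Psi(r)\lesssim r^p$ for \emph{small} $r$, to turn $\Psi(r)^{1/p}$ into $r$. Together with the hypothesis this gives $\Psi(r)\asymp r^p$ on $(0,R_0]$. It comes from almost-monotonicity of $\Psi(r)/r^p$ under the chain condition \cite[Lemma~4.1]{Yan25b}, not from large scales.
\item The resulting inequality involves $\mathrm{Lip}_d[u]$ and holds only for $r\le R_0$. So it is not yet an upper-gradient Poincar\'e inequality, and Heinonen--Koskela does not apply verbatim. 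The paper runs the covering argument of \cite{HK98} directly with $\mathrm{Lip}_d[u]$. It then passes to a bi-Lipschitz geodesic metric (via the chain condition), so that $\mathrm{Lip}_d[u]\le g$ for every continuous upper gradient $g$. Finally it uses \cite[Theorem~1.1]{EBPC24} to identify $\operatorname{Mod}_p(E,F)$ with the capacity computed over locally Lipschitz $u$ with locally Lipschitz upper gradients.
\end{enumerate}
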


To prove part~\ref{it.main1} of Theorem~\ref{t.main}, we use the discrete approximations of the \emph{Cheeger energy} constructed in \cite{ACDM15}. The Poincar\'{e} inequality bounds the discrete energy at scale $\delta$ by $\Psi(\delta)\delta^{-p}\sE_p(u)$ for $u\in\sF_p$. Passing to the $\Gamma$-limit then shows that the Cheeger energy vanishes on $\sF_p$. The equality between modulus and capacity subsequently yields \ref{VMp}. For part~\ref{it.main2}, the Poincar\'{e} inequality, together with the capacity upper bound, gives a \emph{weak $(1,p)$-Poincar\'{e} inequality} formulated in terms of pointwise Lipschitz constants. An adaptation of the argument of Heinonen and Koskela \cite{HK98} then produces a curve family of positive $p$-modulus.

A direct consequence of Theorem~\ref{t.main} is the following triviality of the Newtonian Sobolev space, by \cite[Proposition 7.1.33]{HKST15}. In particular, if \eqref{e.ma1} holds, then $(X, d, m)$ cannot support a nontrivial Poincar\'{e} inequality formulated in terms of upper gradients, since every function has zero minimal $p$-weak upper gradient. For Sierpi\'{n}ski carpets, the nonexistence results for Poincar\'{e} inequalities can also be found in Bourdon and Pajot \cite[Proposition 4.5]{BP02}.

\begin{corollary}
Let $(X,d)$ be a complete metric space, and let
$(X,d,m,\sE_{p},\sF_{p},\Gamma_{p})$
be a regular local $p$-Dirichlet space satisfying \ref{VD} and
$\hyperlink{PI}{\mathrm{PI}_{p}(\Psi)}$. If the scale function $\Psi$
satisfies \eqref{e.ma1}, then
\begin{equation}
N^{1,p}(X;V)=L^{p}(X;V)
\end{equation}
for every vector space $V$, where $N^{1,p}(X;V)$ denotes the
\emph{$V$-valued Newtonian Sobolev space} defined in
\cite[Eq.~(7.1.26)]{HKST15}.
\end{corollary}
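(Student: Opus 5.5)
The corollary follows from Theorem~\ref{t.main}\ref{it.main1} combined with the characterization of Newtonian functions via modulus. Under the stated hypotheses (complete $(X,d)$, a regular local $p$-Dirichlet space with \ref{VD} and $\hyperlink{PI}{\mathrm{PI}_{p}(\Psi)}$, and \eqref{e.ma1}), Theorem~\ref{t.main}\ref{it.main1} gives \ref{VMp}: $\Mod_p(\Gamma)=0$ for every family $\Gamma$ of curves in $(X,d)$. In particular, the family $\Gamma_{\mathrm{rect}}$ of all nonconstant compact rectifiable curves is $p$-exceptional.

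I will then show directly that every $u\in L^p(X;V)$ (more precisely, every Borel representative $u\colon X\to V$ with $\int_X\|u\|^p\,dm<\infty$) has $g\equiv 0$ as a $p$-weak upper gradient. By definition, a Borel function $g\ge0$ is a $p$-weak upper gradient of $u$ if the upper gradient inequality
\begin{equation}
\|u(\gamma(0))-u(\gamma(\ell_\gamma))\|\le\int_\gamma g\,ds
\end{equation}
holds for all curves outside a family of zero $p$-modulus. Since the set of all curves has zero modulus, the condition is vacuous and $g=0$ qualifies. Hence $\tilde u\in\widetilde N^{1,p}(X;V)$ with $\|\tilde u\|_{N^{1,p}}=\|u\|_{L^p}$.

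Next I would check the quotient. In \cite[Eq.~(7.1.26)]{HKST15}, $N^{1,p}$ is the quotient of $\widetilde N^{1,p}$ by functions with zero $N^{1,p}$-norm, i.e.\ by $m$-a.e.\ vanishing functions, since the norm is $\|u\|_{L^p}+\inf_g\|g\|_{L^p}=\|u\|_{L^p}$. The natural map $N^{1,p}(X;V)\to L^p(X;V)$ is therefore an isometric bijection. Surjectivity holds because any representative of an $L^p$ class works; injectivity and well-definedness hold because two functions equal $m$-a.e.\ differ by a null-norm function. This is exactly the content of \cite[Proposition 7.1.33]{HKST15}, which I would cite to avoid repeating it.

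The only potential obstacle is a mismatch of conventions: whether \ref{VMp} as stated covers all curves, including nonrectifiable or constant ones, versus the curve family used in \cite{HKST15}. Since \ref{VMp} asserts vanishing for \emph{every} family, it covers whatever family is required, so no extra work is needed. Measurability of the representatives is the only technical point to note.
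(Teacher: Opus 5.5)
Your proposal is correct and follows the paper's route: Theorem~\ref{t.main}\ref{it.main1} gives \ref{VMp}, and \cite[Proposition~7.1.33]{HKST15} then yields $N^{1,p}(X;V)=L^{p}(X;V)$. Your unpacking of that proposition is accurate: once all curves are $p$-exceptional, the zero function is a $p$-weak upper gradient of every function, so the $N^{1,p}$-norm collapses to the $L^{p}$-norm and the quotient coincides with $L^{p}(X;V)$.
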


Our second application concerns Ahlfors-regular conformal dimension. On planer Sierpi\'{n}ski carpets, the vanishing of curve modulus leads to the fact that the Euclidean metric does not attain the Ahlfors-regular conformal dimension. The conformal geometry of Sierpi\'{n}ski carpets is also of interest in geometric group theory, since Sierpi\'{n}ski carpets arise naturally as boundaries of hyperbolic groups and are related to a carpet analogue of Cannon's conjecture; see \cite[Theorem~5 and Conjecture~6]{KK00}.

Keith and Laakso \cite{KL04} characterized attainment of the Ahlfors-regular conformal dimension by the existence of a weak tangent carrying a curve family of positive modulus. In general, however, weak tangents can be difficult to identify. Our next result bypasses this difficulty for spaces carrying a symmetric diffusion governed by two-sided Gaussian or sub-Gaussian heat kernel estimates. Examples include complete Riemannian manifolds with nonnegative Ricci curvature \cite{LY86}, Heisenberg group \cite{Li10}, certain p.c.f.\ self-similar fractals \cite{BP88,HK99}, resistance-form spaces with volume-doubling resistance metrics \cite{Kum04}, certain non-p.c.f.\ Sierpi\'{n}ski gasket-type fractals \cite{CQ24}, Laakso-type spaces \cite{Ste13,Mur25,AEBS25}, Sierpi\'{n}ski carpets \cite{BB92,BB99,Kaj23}, and certain Julia sets equipped with suitable resistance forms and measures \cite{RT10,Kig09,Kig12}.

More precisely, we prove that, on an Ahlfors-regular metric measure space carrying a strongly local Dirichlet form satisfying $\hyperlink{HKE}{\mathrm{HKE}(\beta)}$,  comparison of the walk dimension $\beta$ with $2$ \emph{determines} whether the given metric attains its Ahlfors-regular conformal dimension. The relevant definitions are recalled in Section~\ref{s.app}.

\begin{theorem}\label{t.dic}
	Let $\alpha\in[1,\infty)$. Let $(X,d,m)$ be a metric measure space that is proper, length and Ahlfors $\alpha$-regular. Let $\beta\in(1,\infty)$. Assume that there exists a conservative, regular and strongly local Dirichlet form $(\sE,\sF)$ on $L^{2}(X,m)$ that satisfies \hyperlink{HKE}{$\mathrm{HKE}(\beta)$}. Then we have \begin{equation}
		\beta>2\ \Longleftrightarrow \ \hyperlink{VMp}{\mathrm{VM}_{2}} \text{ holds}\ \Longleftrightarrow \ \dim_{\mathrm{ARC}}(X,d)<\alpha,
	\end{equation} 
	and \begin{equation}
		\beta=2\ \Longleftrightarrow \ \hyperlink{VMp}{\mathrm{VM}_{2}} \text{ fails}\ \Longleftrightarrow \ \dim_{\mathrm{ARC}}(X,d)=\alpha.
	\end{equation}
\end{theorem}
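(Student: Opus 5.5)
The plan is to reduce Theorem~\ref{t.dic} to Theorem~\ref{t.main} with $p=2$ and $\Psi(r)=r^{\beta}$, and to combine it with the Keith--Laakso characterization and the known constraint $\beta\geq 2$.

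\textbf{Step 1: reduction to a $2$-Dirichlet space.} Under $\mathrm{HKE}(\beta)$ on a proper Ahlfors-regular space, $(\sE,\sF)$ with its energy measure $\Gamma_{2}$ is a regular local $2$-Dirichlet space. I will use the standard consequences of heat kernel estimates (Grigor'yan--Hu--Lau, Barlow--Grigor'yan--Kumagai), taken in the form recalled in Section~\ref{s.pre}:
\begin{enumerate}[label=(\roman*)]
\item \ref{VD}, since $m$ is Ahlfors regular;
\item $\mathrm{PI}_{2}(r^{\beta})$;
\item $\operatorname{Cap}_{2}(r^{\beta})_{\leq}$.
\end{enumerate}
A length space satisfies the chain condition. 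Moreover, $\mathrm{HKE}(\beta)$ forces $\beta\geq2$; this is the classical lower bound $\beta\ge 2$ for strongly local forms with sub-Gaussian estimates. Hence exactly one of $\beta>2$ and $\beta=2$ holds.

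\textbf{Step 2: $\beta$ versus $\mathrm{VM}_2$.} If $\beta>2$, then $\Psi(r)/r^{2}=r^{\beta-2}\to0$, so Theorem~\ref{t.main}\ref{it.main1} gives \hyperlink{VMp}{$\mathrm{VM}_{2}$}. If $\beta=2$, the $\liminf$ equals $1>0$, so Theorem~\ref{t.main}\ref{it.main2} shows that $\mathrm{VM}_2$ fails. Since the two cases are exhaustive, both of the first equivalences follow.

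\textbf{Step 3: the conformal dimension.} Here $\mathrm{VM}_2$ must be linked to $\dim_{\mathrm{ARC}}(X,d)=\alpha$. If $\beta=2$, Theorem~\ref{t.main}\ref{it.main2} produces a curve family of positive $2$-modulus. I need positive $\alpha$-modulus, so the cleanest route is to upgrade $\mathrm{PI}_2$ to an upper-gradient Poincar\'e inequality. When $\beta=2$, the estimates $\mathrm{HKE}(2)$ make the intrinsic metric comparable to $d$, so $X$ supports a $(1,2)$-Poincar\'e inequality in the Heinonen--Koskela sense. For $\alpha\ge2$ this improves to a $(1,\alpha)$-Poincar\'e inequality by H\"older's inequality. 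On an Ahlfors $\alpha$-regular space this implies that $\dim_{\mathrm{ARC}}=\alpha$ is attained: the family of curves joining two continua has positive $\alpha$-modulus, and Tyson's theorem, or Keith--Laakso, then gives the conclusion. I also need $\alpha\geq2$ when $\beta=2$. This follows from the Einstein relation $\beta\le\alpha+1$ together with the dimension of the Gaussian diffusion, or more directly because a $(1,2)$-Poincar\'e inequality with $\alpha<2$ would still yield a $(1,\alpha')$-Poincar\'e inequality for some $\alpha'<2$ by Keith--Zhong, and the argument goes through with $p=\max(\alpha,\alpha')$.

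Conversely, if $\beta>2$, suppose $\dim_{\mathrm{ARC}}=\alpha$. Keith--Laakso then gives a weak tangent $(Y,d_\infty,\mu_\infty)$ with a curve family of positive $\alpha$-modulus. To reach a contradiction, I pass the structure to tangents: blow-ups of $(X,d/r,m/r^\alpha)$ carry rescaled forms with time scaling $r^{\beta}$, and $\mathrm{HKE}(\beta)$, $\mathrm{PI}_2(r^\beta)$ and $\operatorname{Cap}_2(r^{\beta})_{\le}$ are scale invariant with uniform constants. By Mosco/$\Gamma$-compactness of such forms, $Y$ carries a $2$-Dirichlet space with $\mathrm{PI}_2(r^\beta)$. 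Theorem~\ref{t.main}\ref{it.main1}, applied to $Y$ but now with $p=\alpha$, requires $\mathrm{PI}_\alpha$. This is the main obstacle. I would first try to prove $\mathrm{VM}_p$ for all $p$ directly: the discrete Cheeger-energy argument of \ref{it.main1} uses only the oscillation decay. H\"older continuity with exponent $\theta=(\beta-\alpha)/2$, or more generally the Poincar\'e-type bound on $\delta$-scale differences by $\delta^{\beta/2}$, combined with the $L^p$ discrete approximations, should force the $p$-Cheeger energy to vanish whenever the scale exponent exceeds $1$.

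Failing that, the fallback is the Keith--Laakso reformulation with $p=2$ via Tyson-type estimates, using $\mathrm{VM}_2$ on all weak tangents, which follows from the uniform $\mathrm{PI}_2(r^{\beta})$.
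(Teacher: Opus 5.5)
Your Steps 1 and 2 agree with the paper. The genuine gap is in Step 3, in the direction $\beta>2\Rightarrow\dim_{\mathrm{ARC}}(X,d)<\alpha$.

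\textbf{The direction $\beta>2$.} You take the Keith--Laakso output to be a weak tangent carrying a curve family of positive $\alpha$-modulus. You then need $\mathrm{VM}_\alpha$ on that tangent, which $\mathrm{VM}_2$ does not give when $\alpha>2$. Your proposed repair does not work in general.
\begin{enumerate}[label=(\roman*)]
\item The natural version of the discrete argument at exponent $p$ uses $|u_{\delta,i}-u_{\delta,j}|^p\le(2\|u\|_\infty)^{p-2}|u_{\delta,i}-u_{\delta,j}|^2$ for bounded $u\in\sF$. Together with the exponent-$2$ bound $\Psi(\delta)\delta^{-2}\sE(u)$, it gives a discrete $p$-energy at most a constant times $\|u\|_\infty^{p-2}\delta^{\beta-p}\sE(u)$. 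This kills the $p$-Cheeger energy only for $p<\beta$.
\item H\"older continuity with exponent $(\beta-\alpha)/2$ presupposes $\beta>\alpha$.
\end{enumerate}
Neither reaches $p=\alpha$ when $\beta\le\alpha$, which the hypotheses allow; for example $\alpha=3$, $\beta=5/2$ is admissible by the remark after Theorem~\ref{t.dic}.

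The missing idea is simpler. Keith--Laakso actually produce a weak tangent on which $\mathrm{VM}_1$ fails (uniformly big $1$-modulus). The implication $\mathrm{VM}_2\Rightarrow\mathrm{VM}_1$ is elementary by Lemma~\ref{l.VMq>p}: if $g\in L^2$ has infinite integral along every curve, so does $g^2\in L^1$. So the paper first transfers $\mathrm{HKE}(\beta)$ to the tangent via Lemma~\ref{l.dftgt}. That step rests on the stability theorem of \cite{Che26} and the Attouch--Wets/pointed Gromov--Hausdorff equivalence in the appendix; your one-line appeal to ``Mosco compactness'' is standing in for this substantial input. Theorem~\ref{t.main}\ref{it.main1} then gives $\mathrm{VM}_2$, hence $\mathrm{VM}_1$, on the tangent, which is a contradiction. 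No $\mathrm{PI}_\alpha$ is needed. Your vague fallback becomes exactly this argument once the exponent in the Keith--Laakso conclusion is stated correctly.

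\textbf{The direction $\beta=2$.} Your claim that $\alpha\ge2$ is false. The bound $\beta\le\alpha+1$ only gives $\alpha\ge1$, and $\mathbb{R}$, or any admissible pair $(\alpha,2)$ with $\alpha\in[1,2)$ from the same remark, satisfies $\mathrm{HKE}(2)$. Keith--Zhong only lowers the exponent from $2$ to $2-\epsilon$, so $\max(\alpha,\alpha')$ can still exceed $\alpha$. In that case Tyson's theorem, which needs positive $\alpha$-modulus, does not apply.
\begin{enumerate}[label=(\roman*)]
\item For $\alpha\ge2$ your route is a valid alternative that works on $X$ directly. It takes the weak $(1,2)$-Poincar\'e inequality of Proposition~\ref{p.Lip<Sob}, upgrades it to upper gradients and then to a $(1,\alpha)$-inequality, and concludes via the Loewner property and Tyson.
\item The case $\alpha=1$ is trivial by connectedness.
\item For $\alpha\in(1,2)$ your argument does not close as written.
\end{enumerate}
The paper proceeds differently here too. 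It transfers $\mathrm{HKE}(2)$ to an arbitrary weak tangent via Lemma~\ref{l.dftgt}, obtains failure of $\mathrm{VM}_2$ there from Theorem~\ref{t.main}\ref{it.main2}, and concludes with \cite[Corollary~1.0.2]{KL04}.
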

It is worth noting that the three notions appearing in Theorem~\ref{t.dic} arise from different structures. The walk dimension $\beta$ describes the scaling of the chosen Dirichlet form, or equivalently of the associated symmetric diffusion, relative to the metric and measure. The property $\hyperlink{VMp}{\mathrm{VM}_{2}}$ depends on the metric and measure, while the Ahlfors-regular conformal dimension $\dim_{\mathrm{ARC}}$ \emph{depends only on the quasisymmetric gauge of the metric}. Under the heat kernel estimate, Theorem~\ref{t.dic} shows that these three a priori different aspects characterize one another.

The proof of Theorem \ref{t.dic} combines two main ingredients. The first is the characterization of Keith and Laakso \cite{KL04}, which relates the minimality of Ahlfors-regular conformal dimension to the existence of a weak tangent carrying a curve family of positive modulus. The second is the stability of sub-Gaussian heat kernel estimates under pointed Gromov--Hausdorff convergence, proved in \cite{Che26}. As a consequence, we show that a generalized Sierpi\'{n}ski carpet equipped with the Euclidean metric is not minimal for its Ahlfors-regular conformal dimension. We also give a second, more direct proof of this application. The argument combines the \emph{Furstenberg homogeneity} of generalized Sierpi\'{n}ski carpets with the description of their weak tangents in terms of \emph{microsets}, and directly proves that every weak tangent satisfies $\hyperlink{VMp}{\mathrm{VM}_{p}}$ for every $p\in[1,\infty)$. This argument is related to the method used in \cite[Théorème~5.2]{LKP04}.
\begin{remark}
We make two comments on the assumptions and the scope of Theorem~\ref{t.dic}.
\begin{enumerate}[label=\textup{({\roman*})},align=right,leftmargin=*,topsep=5pt,parsep=0pt,itemsep=2pt]
\item By \cite[Theorems~2.1 and~2.2]{Mur25} (see also \cite[Theorem~8.5]{Che26}), there exists an Ahlfors $\alpha$-regular metric measure Dirichlet space $(X,d,m,\sE,\sF)$ satisfying the heat kernel estimates $\hyperlink{HKE}{\mathrm{HKE}(\beta)}$ if and only if
\begin{equation}
\alpha\in[1,\infty)\ \text{and}\ \beta\in[2,\alpha+1].
\end{equation}
Thus, Theorem~\ref{t.dic} applies to every admissible pair $(\alpha,\beta)$.
\item One may ask whether Theorem~\ref{t.dic} extends to general volume-doubling metric measure spaces carrying a strongly local Dirichlet form with sub-Gaussian heat kernel estimates, with the Hausdorff dimension replaced by the \emph{Assouad dimension} $\dim_{\mathrm A}$ and the Ahlfors-regular conformal dimension replaced by the \emph{conformal Assouad dimension} $\dim_{\mathrm{CA}}$ or the conformal Hausdorff dimension $\dim_{\mathrm{CH}}$; see \cite[Definitions~1.4.14,~2.2.1,~and~2.2.4]{MT10} for definitions. The answer is negative, even in the Gaussian case.

Let $\mathrm{SG}$ be the standard Sierpi\'{n}ski gasket in $\bR^{2}$, let $(\sE,\sF)$ be its standard resistance form on $\mathrm{SG}$, let $\mu$ be the Kusuoka measure, and let $\rho$ be the harmonic geodesic metric (the intrinsic metric associated with $\mu$); see \cite[Eq.~(1.1),~Definitions~2.1~and~2.11,~Theorems 2.5~and~4.2]{Kaj12}. Then $(\sE,\sF)$ is a conservative, strongly local, regular Dirichlet form on $L^{2}(\mathrm{SG},\mu)$ satisfying the Gaussian heat kernel estimates $\hyperlink{HKE}{\mathrm{HKE}(2)}$; see \cite[Theorem~6.33]{KM23}.

By \cite[Theorem~6.33]{KM23}, the metric $\rho$ is quasisymmetric to the Euclidean metric on $\mathrm{SG}$. Since conformal dimensions are quasisymmetric invariants, \cite[Proposition~2.2.6]{MT10}, together with \cite[Theorem~1.3]{TW06} and \cite[Theorem~1.6]{EB24} (see also \cite[Theorem B.8]{KS26}), yields
\begin{equation}
	\dim_{\mathrm{CH}}(\mathrm{SG},\rho)=\dim_{\mathrm{CA}}(\mathrm{SG},\rho)=\dim_{\mathrm{ARC}}(\mathrm{SG},\rho)=1.
\end{equation}
On the other hand, by \cite[Theorems~6.1 and~7.2]{Kaj12},
\begin{equation}
	\dim_{\mathrm A}(\mathrm{SG},\rho)\geq\dim_{\mathrm H}(\mathrm{SG},\rho)>1.
\end{equation}
Therefore, without a regularity assumption relating the reference measure to the given metric, Gaussian heat kernel estimates $\hyperlink{HKE}{\mathrm{HKE}(2)}$ do not generally imply that the metric attains either its conformal Assouad dimension or conformal Hausdorff dimension.
\end{enumerate}
\end{remark}

This paper is organized as follows. In Section~\ref{s.pof}, we prove Theorem~\ref{t.main}. In Section~\ref{s.app}, we recall the terminology used in Theorem~\ref{t.dic}, prove Theorem~\ref{t.dic}, and establish directly that every weak tangent of a generalized Sierpi\'{n}ski carpet satisfies \ref{VMp} for every $p\in[1,\infty)$. In Appendix~\ref{s.apdx}, we prove that Attouch--Wets convergence and pointed Gromov--Hausdorff convergence are equivalent up to passing to a subsequence. This equivalence is used in the proof of Theorem~\ref{t.dic}.

\begin{notation}
In this paper, we use the following notation and conventions.
\begin{enumerate}[label=\textup{(\roman*)},align=right,leftmargin=*,topsep=5pt,parsep=0pt,itemsep=2pt]
	\item $\bN:=\{1,2,\ldots\}$. That is $0\notin \bN$.
	\item Let $U$ and $V$ be two open subsets in a topological space. If $U$ is precompact and the closure of $U$ is contained in $V$, then we write $U\Subset V$.
	\item Let $X$ be a non-empty set. We define $\one_{A}\in\mathbb{R}^{X}$ for $A\subset X$ by
	 \[\one_{A}(x):= \begin{dcases}
	 	1 & \mbox{if $x \in A$,}\\
	 	0 & \mbox{if $x \notin A$.}
	 \end{dcases}\]
	 \item Let $X$ be a topological space. We set the class of continuous function spaces:\begin{align}
		C(X)&:=\Sett{f\in\bR^{X}}{\textrm{$f$ is a continuous real-valued function on $X$}},\\
		C_c(X)&:=\Sett{f\in C(X)}{\textrm{$X\setminus f^{-1}(0)$ has compact closure in $X$}}.
			\end{align}
		For any $f\in C(X)$, we define $\norm{f}_{\sup}:=\sup_{x\in X}\abs{f(x)}$.
	\item We use the letters $C$, $c$ etc. to denote positive constants whose value is inessential and may change at each occurrence. 
\item For two extended real numbers $A,B\in\mathbb{R}\cup\{-\infty,\infty\}$, let $A\wedge B:=\min(A,B)$ and $A\vee B:=\max(A,B)$.

\item For real-valued quantities $f$ and $g$, if there exists an implicit constant $C\geq1$ that depends on inessential parameters such that $f\leq Cg$ then we write $f\lesssim g$.\end{enumerate}
\end{notation}

\subsection{Preliminaries}\label{s.pre}

In this section, we recall the assumptions appearing in Theorem \ref{t.main}. We say that a triple $(X,d,m)$ is a \emph{metric measure space}, if $(X,d)$ is a locally compact separable metric space, $m$ is a nontrivial locally finite Borel regular measure on $(X,d)$, and $\#X\geq2$.

\begin{definition}
We say that $m$ satisfies the \emph{volume doubling property} \ref{VD}, or $m$ is a \emph{doubling measure} on $(X,d)$, or $(X,d,m)$ is \emph{volume doubling} \ref{VD}, if there exists $C_{\mathrm{VD}} \in (1,\infty)$ such that
	\begin{equation} \label{VD} \tag*{$\operatorname{VD}$}
	0 < m(B(x,2r)) \le C_{\mathrm{VD}} m(B(x,r)) <\infty, \quad \mbox{for all $(x,r)\in  X \times(0,\infty)$.}
	\end{equation}
\end{definition}
\begin{definition}
Let $( X  , d)$ be a metric space with $\#X\geq2$. Let $E\subset X$ be a non-empty subset. A function $u:E\to \bR$ is \emph{Lipschitz} on $E$, if there exists $L\in(0,\infty)$ such that \begin{equation}
		\abs{u(x)-u(y)}\leq L\cdot d(x,y),\ \text{ for all }(x,y)\in E\times E.
	\end{equation}
	We denote $\Lip(X,d)$ to be the collection of all Lipschitz functions on $X$. For $g\in\Lip(X,d)$, $\mathrm{LIP}[g]$ is the smallest constant $L\in[0,\infty)$ such that $\abs{g(x)-g(y)}\leq L d({x,y})$ for all $x,y\in X$. We say a function $u:X\to\bR$ is \emph{locally Lipschitz}, if for every $x\in X$, there is an open set $G\subset X$ containing $x$ such that $\restr{u}{G}$ is Lipschitz on $G$. The collection of all locally Lipschitz functions is denoted by $\Lip_{\loc}(X,d)$.
\end{definition}

The following \emph{chain condition} is taken from \cite[Definition 2.10]{KM20}.
\begin{definition} Let $(X,d)$ be a metric space with $\#X\geq2$. A sequence  $\{x_j\}_{j=0}^N$  of points in $X$ is an \emph{$\epsilon$-chain} between points $x,y \in X$ if $x_0=x$, $x_N=y$ and $d(x_j,x_{j+1}) < \epsilon$ for all $j \in\{0,1,\ldots,N-1\}$. For any $\epsilon>0$ and $x,y \in X$, define
 \[d_\epsilon(x,y) = \inf\Sett{\sum_{j=0}^{N-1} d(x_{j},x_{j+1})}{N\in\bN,\ \{x_j\}_{j=0}^N \text{ is an $\epsilon$-chain between $x$ and $y$}}.\]
The metric space $(X, d )$ is said to satisfy the \emph{chain condition}, if there exists $C \in[1,\infty)$ such that
\begin{equation}
	d_\epsilon(x,y)\leq C d(x,y),\ \text{for all }(\epsilon,x,y)\in(0,\infty)\times X\times X.
\end{equation}
\end{definition}

We follow \cite[Sections 6.2 and 5.2]{HKST15} for the following definitions of upper gradients and modulus of curve families. We also refer to \cite[Section 5.1]{HKST15} for the definition of line integrals along curves.

 \begin{definition}Let $( X  , d, m)$ be a metric measure space.
	\begin{enumerate}[label=\textup{({\arabic*})},align=right,leftmargin=*,topsep=5pt,parsep=0pt,itemsep=2pt]
	\item A Borel function $\rho:  X   \rightarrow[0, \infty]$ is said to be an \emph{upper gradient} of a map $u:  X   \rightarrow \mathbb{R}$ if
\begin{equation}\label{e.upgrad}
\abs{u(\gamma(a))- u(\gamma(b))} \leq \int_\gamma \rho \dif s \text{ for every rectifiable curve $\gamma:[a, b] \rightarrow  X  $.}
\end{equation}
\item Let $\Gamma$ be a family of curves in $ X  $. We say a Borel function $\rho:  X   \rightarrow[0, \infty]$ is an \emph{admissible density} for $\Gamma$, if \[  \begin{minipage}{270pt}
 $\displaystyle{\int_\gamma \rho \dif s \geq 1}$ for every locally rectifiable curve $\gamma \in \Gamma$.
\end{minipage}\] 
\item Let $p\in[1,\infty)$. The \emph{$p$-modulus of $\Gamma$} is defined as
\[\Mod_{p}({\Gamma}):=\inf\left\{\int_ X   \rho^p \dif m\ \Bigg|\  \text{$\rho: X  \to[0,\infty)$ is admissible} \right\}\in [0, \infty].\]
If $E\subset X$ and $F\subset X$, we define $\Mod_{p}(E,F)$ to be the $p$-modulus of all curves joining the sets $E$ and $F$ in $ X  $.
\end{enumerate}
\end{definition}

The following definition of \emph{regular local $p$-Dirichlet space} is given in \cite[Definition 2.2]{EBM25} and \cite[Definition 2.1]{EB26}.
\begin{definition}
	Let $p\in[1,\infty)$. We call $(X,d,m,\sE_{p},\sF_{p},\Gamma_{p})$ a \emph{regular local $p$-Dirichlet space}, if the following properties hold:\begin{enumerate}[label=\textup{({\arabic*})},align=right,leftmargin=*,topsep=5pt,parsep=0pt,itemsep=2pt]
	\item $(X,d)$ is a locally compact, separable metric space with a Radon measure $m$ that has full support, and $\# X \geq 2$.
	\item The space $\sF_{p}$ is a subspace of $L^{p}(X,m)$ and $(\sF_{p},\sE_{p,1}^{1/p})$ is a Banach space where $\sE_{p,1}(\cdot):=\sE_{p}(\cdot)+\norm{\cdot}^{p}_{L^{p}(X,m)}$.
	\item For each $f\in\sF_{p}$, there exists a finite non-negative Borel measure $\carre{f}$ on $X$, called the \emph{energy measure} of $f$, such that \begin{equation}
		\carre{f}(X)=\sE_{p}(f) \text{ and }\carre{\lambda f}=\abs{\lambda}^{p}\carre{f},\ \text{for all }\lambda\in\bR.
	\end{equation}
	\item For every $f,g\in\sF_{p}$ and every Borel set $A\subset X$,\begin{equation}
		\carre{f+g}(A)^{1/p}\leq\carre{f}(A)^{1/p}+\carre{g}(A)^{1/p}.
	\end{equation}
	\item For every $f\in\sF_{p}$ and every $g\in\mathrm{Lip}(\bR)$ with $g(0)=0$, we have \begin{equation}
		g\circ f\in\sF_{p}\text{ and }\dif\carre{g\circ f}\leq\mathrm{LIP}[g]^{p}\dif\carre{f}.
	\end{equation}
	\item For every $f\in\sF_{p}$ and any open set $A\subset X$, if $\restr{f}{A}=C$ $m$-a.e. for some constant $C\in\bR$, then $\carre{f}(A)=0$.
	\item For every $f\in L^{p}(X,m)$ and any sequence of functions $(f_{j})_{j\in\bN}$ in $\sF_{p}$ such that $f_{j}\to f$ in $L^{p}(X,m)$ and $\sup_{j\in\bN}\sE_{p}(f_{j})<\infty$, then \begin{equation}
		f\in\sF_{p}\text{ and }\carre{f}(A)\leq\liminf_{j\to\infty}\carre{f_{j}}(A)\ \text{for every Borel set }A\subset X.
	\end{equation}
	\item The set $C_{c}(X)\cap\sF_{p}$ is dense in $(\sF_{p},\sE_{p,1}^{1/p})$ and in $(C_{c}(X),\norm{\cdot}_{\sup})$.
\end{enumerate}

\end{definition}

We consider a continuous increasing bijection $\Psi:(0, \infty) \rightarrow(0, \infty)$ such that
\begin{equation}\label{e.scale}
	C_{\Psi}^{-1}\left(\frac{R}{r}\right)^{\beta_1} \leq \frac{\Psi(R)}{\Psi(r)} \leq C_{\Psi}\left(\frac{R}{r}\right)^{\beta_2},\ 0<r \leq R<\infty,
\end{equation}
where $C_{\Psi} \in[1, \infty)$ and $\beta_1, \beta_2 \in(0, \infty)$ are universal constants. Such a function $\Psi$ is called \emph{scale function}.

\begin{definition}[\hypertarget{PI}{$\text{Poincar\'{e} inequality}$}]
Let $(X,d,m,\sE_{p},\sF_{p},\Gamma_{p})$ be a regular local $p$-Dirichlet space.
	We say that the \emph{Poincar\'e inequality} \ref{PI} holds if there exist $C_{\mathrm{PI}} \in (0, \infty)$ and $A_{\mathrm{PI}} \in[1, \infty)$ such that for any $x \in X$, any $r \in(0, \infty)$ and any $u \in \mathcal{F}_p$,
\begin{equation}\label{PI}\tag*{$\operatorname{PI}_{p}(\Psi)$}
\int_{B(x, r)}\left|u-u_{B(x, r)}\right|^p \dif m \leq C_{\mathrm{PI}} \Psi(r)\Gamma_p\langle u\rangle(B\left(x, A_{\mathrm{PI}} r\right)),
\end{equation}
where $u_{B(x, r)}:=\fint_{B(x, r)} u \dif m={m(B(x,r))}^{-1}\int_{B(x, r)} u \dif m$. 

We say that \hyperlink{PI}{$\mathrm{PI}_{p}(\beta)$} holds if $\hyperlink{PI}{\mathrm{PI}_{p}(\Psi)}$ holds with $\Psi(r)=r^{\beta}$, $r\in[0,\infty)$.
\end{definition}
\begin{definition}
	Let $(X,d,m,\sE_{p},\sF_{p},\Gamma_{p})$ be a regular local $p$-Dirichlet space. For any two disjoint non-empty Borel sets $A$ and $B$ in $X$, we define \begin{equation}
	\operatorname{Cap}_{p}(A,B):=\inf\Sett{\sE_{p}(f)}{f\in\sF_{p}\text{ such that }\restr{{f}}{A}=0\ m\text{-a.e.}\text{ and }\restr{{f}}{B}=1\ m\text{-a.e.}}.
\end{equation}
We say that the \hypertarget{cap<}{$\text{\emph{capacity upper bound}}$} \hyperlink{cap<}{$\operatorname{Cap}_{p}(\Psi)_{\leq}$} holds if there exist $C \in(0,\infty)$ and $A \in (1,\infty)$ such that 
\begin{equation}
	\operatorname{Cap}_{p}(B(x,r),X\setminus B(x,Ar))\leq C\frac{m(B(x,r))}{\Psi(r)},\ \text{ for any }(x, r)\in X\times (0,\infty).
\end{equation}
\end{definition}
We say that \hyperlink{cap<}{$\mathrm{Cap}_{p}(\beta)_{\leq}$} holds if $\hyperlink{cap<}{\mathrm{Cap}_{p}(\Psi)_{\leq}}$ holds with $\Psi(r)=r^{\beta}$, $r\in[0,\infty)$.
\section{Proof of Theorem \ref{t.main}}\label{s.pof}
We recall the notion of upper gradient in metric measure space.
\begin{definition}Let $( X  , d, m)$ be a metric measure space.
\begin{enumerate}[label=\textup{({\arabic*})},align=right,leftmargin=*,topsep=5pt,parsep=0pt,itemsep=2pt]
\item \textup{(Cf. {\cite[Section 9.2]{ACDM15}})} A Borel function $\rho:  X   \rightarrow[0, \infty]$ is said to be a \emph{$p$-upper gradient} of a function $u:  X   \rightarrow \mathbb{R}$ if $\int_{ X  }\rho^{p}\dif m<\infty$, and there exists a function $\wt{u}$ on $ X  $ and a curve family $\Gamma$ such that $\Mod_{p}(\Gamma)=0$, $u=\wt{u}$ $m$-a.e. in $ X  $, and \begin{equation}
	\abs{\wt{u}(\gamma(a))- \wt{u}(\gamma(b))} \leq \int_\gamma \rho \dif s<\infty
\end{equation}
for every rectifiable curve $\gamma:[a, b] \rightarrow  X$ that is not in $\Gamma$. 

If the $p$-upper gradient for $u$ exists, we denote $\abs{\nabla u}_{S,p}$ by the $p$-upper gradient of $u$ with minimal $L^{p}( X  ,m)$ norm, called \emph{minimal $p$-upper gradient}, which exists by the argument in \cite[p.~54]{ACDM15}
\item For any Borel measurable real valued function $u:X\to \bR$ such that $\int_{X}\abs{u}^{p}\dif m<\infty$, we define a version of its \emph{Cheeger energy} by \begin{equation}
	\mathrm{Ch}_{p}(u):=\begin{dcases}	\int_{ X  }\abs{\nabla u}_{S,p}^{p}\dif m&\quad \text{if the $p$-upper gradient for $u$ exists,}\\
	\qquad+\infty &\quad\text{otherwise}.
\end{dcases}	
\end{equation}

\item  Let $E$ and $F$ be subsets of $ X  $. Define the following two quantities
	\[\operatorname{cap}_{p}(E, F):= \inf\Biggl\{\int_ X   \rho^p \dif m\Biggm|
 \begin{minipage}{250pt}
{$\rho$ is an upper gradient of real-valued function $u$ on $ X  $ such that $\restr{u}{E} \leq 0$ and $\restr{u}{F}  \geq 1$}
\end{minipage}
\Biggr\},\]

\[\wh{\operatorname{cap}}_p(E, F):= \inf\Biggl\{\int_ X   \rho^p \dif m\Biggm|
 \begin{minipage}{250pt}
{$\rho$ is a $p$-upper gradient of real-valued function $u$ on $ X  $ such that $\restr{u}{E} \leq 0$ and $\restr{u}{F}  \geq 1$}
\end{minipage}
\Biggr\}.\]
 \end{enumerate}
\end{definition}
\begin{remark}
	Since the minimal $p$-upper gradient minimizes the $L^{p}$ norm, we have the following upper bound on $\wh{\operatorname{cap}}_p(E, F)$:\begin{equation}\label{e.cap<Ch}
		\wh{\operatorname{cap}}_p(E, F)\leq \inf\Biggl\{\mathrm{Ch}_{p}(u)\Biggm|
\begin{minipage}{250pt}
{ $u:X\to \bR$ is a Borel measurable function such that $\int_{X}\abs{u}^{p}\dif m<\infty$, $\restr{u}{E} \leq 0$ and $\restr{u}{F}  \geq 1$}
\end{minipage}\Biggr\}.
	\end{equation}
\end{remark}

\begin{lemma}\label{l.Mod=cap}
	Let $E$ and $F$ be subsets of a metric measure space $ X  $.
	Then \begin{equation}
		\wh{\operatorname{cap}}_p(E, F)=\operatorname{cap}_{p}(E, F)=\Mod_{p}(E,F).\label{e.cap1=2}
	\end{equation}
	\end{lemma}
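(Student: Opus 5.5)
We will prove the chain of inequalities
\[
\Mod_{p}(E,F)\le \wh{\operatorname{cap}}_p(E,F)\le \operatorname{cap}_{p}(E,F)\le \Mod_{p}(E,F).
\]

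The middle inequality is immediate once we check that an upper gradient is a $p$-upper gradient. Let $\rho$ be an upper gradient of $u$ with $\int_X\rho^p\dif m<\infty$; if the integral is infinite there is nothing to prove. We take $\wt u=u$ and the exceptional family $\Gamma$ to consist of the rectifiable curves $\gamma$ with $\int_\gamma\rho\dif s=\infty$. Then $\Mod_p(\Gamma)=0$, since $\rho/n$ is admissible for $\Gamma$ for every $n\in\bN$, so $\Mod_p(\Gamma)\le n^{-p}\int_X\rho^p\dif m\to0$. Hence every competitor for $\operatorname{cap}_p$ is a competitor for $\wh{\operatorname{cap}}_p$.

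For $\operatorname{cap}_p\le\Mod_p$, take an admissible density $\rho$ for the family of curves joining $E$ and $F$, with finite energy. Since modulus is unchanged by passing to a lower semicontinuous majorant (Vitali--Carathéodory), we may assume $\rho$ is Borel. Define
\[
u(x):=\min\Bigl\{1,\ \inf_{\gamma}\int_\gamma\rho\dif s\Bigr\},
\]
where the infimum runs over rectifiable curves starting in $E$ and ending at $x$, with $u=0$ on $E$ and $u=1$ if no such curve exists. Then $u\le0$ on $E$, and admissibility gives $u\ge1$ on $F$. The concatenation argument shows that $\rho$ is an upper gradient of $u$: if $\gamma$ joins $x$ to $y$, then $|u(x)-u(y)|\le\int_\gamma\rho\dif s$, with care in the case where one endpoint has value $1$ because it is unreachable.

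The main obstacle is that $u$ is only required to be real-valued, and no Borel measurability is demanded in the definition of $\operatorname{cap}_p$. This is convenient, since measurability of such path-infimum functions is delicate in general; it is consistent with the definition given, so we only need $\rho$ to be Borel. If measurability were needed, we would appeal to the Järvenpää--Järvenpää--Rogovin--Rogovin--Shanmugalingam theorem that these functions are $m$-measurable, as in \cite[Section 9.2]{ACDM15}.

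For $\Mod_p\le\wh{\operatorname{cap}}_p$, take $u$ with a $p$-upper gradient $\rho$, a representative $\wt u$, and an exceptional family $\Gamma_0$ of zero modulus, with $u\le0$ on $E$ and $u\ge1$ on $F$. The difficulty is that the boundary conditions are imposed on $u$ while the curve inequality holds for $\wt u$, and the two agree only $m$-a.e. To handle this, we consider the family $\Gamma_1$ of curves $\gamma$ joining $E$ to $F$ along which the set $\{u\neq\wt u\}$, which has measure zero, is met on a set of positive length. Since $\infty\cdot\one_{\{u\ne\wt u\}}$ is admissible for $\Gamma_1$, we get $\Mod_p(\Gamma_1)=0$. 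The endpoint values themselves still need the standard fact that a $p$-upper gradient can be arranged to hold for $u$ itself off a $\Mod_p$-null family; this follows from Fuglede's lemma together with the ACDM argument. We expect this step to be the main technical point and would follow \cite[Section 9.2]{ACDM15} here.

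Granting this, every curve $\gamma$ joining $E$ to $F$ outside $\Gamma_0\cup\Gamma_1$ satisfies $1\le\int_\gamma\rho\dif s$. Hence $\rho+\infty\cdot\rho_0$ is admissible, where $\rho_0$ is a density with $\int_X\rho_0^p\dif m$ arbitrarily small that is admissible for the null family $\Gamma_0\cup\Gamma_1$. Letting the contribution of $\rho_0$ tend to zero gives $\Mod_p(E,F)\le\int_X\rho^p\dif m$, which closes the chain.
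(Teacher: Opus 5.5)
Your first two links are sound. Checking that an upper gradient $\rho$ with $\int_X\rho^p\dif m<\infty$ is a $p$-upper gradient (take $\wt u=u$ and the $\Mod_p$-null family of curves with $\int_\gamma\rho\dif s=\infty$) is the step the paper calls evident. Your path-integral construction for $\operatorname{cap}_p\le\Mod_p$ is the standard argument behind \cite[Theorem 7.31]{Hei01}, which the paper simply cites.

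The gap is the third link, and it cannot be closed. The ``standard fact'' that the curve inequality for $\wt u$ can be arranged to hold for $u$ itself off a $\Mod_p$-null family is false. So is $\Mod_p(E,F)\le\wh{\operatorname{cap}}_p(E,F)$ for arbitrary $E,F$. Take $X=\bR$ with Lebesgue measure, $E=\{0\}$, $F=\{1\}$ and $u=\one_{\{1\}}$. With $\wt u\equiv0$ and empty exceptional family, $\rho\equiv0$ is a $p$-upper gradient of $u$, and $u$ satisfies the boundary conditions, so $\wh{\operatorname{cap}}_p(E,F)=0$. But every density admissible for the segment $t\mapsto t$, $t\in[0,1]$, satisfies $\int_0^1\rho\dif t\ge1$, so $\Mod_p(E,F)\ge1$ by H\"older's inequality. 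More generally, $\wh{\operatorname{cap}}_p(E,F)=0$ whenever $E,F$ are disjoint and one of them is $m$-null, while $\Mod_p(E,F)$ can still be positive (e.g.\ two opposite sides of a square in $\bR^2$). Fuglede's lemma and the arguments of \cite[Section 9.2]{ACDM15} only see the $m$-a.e.\ class of $u$, whereas $\wh{\operatorname{cap}}_p$ prescribes the pointwise values of $u$ on $E$ and $F$. The fact you probably have in mind is that two functions which \emph{both} have $p$-integrable $p$-weak upper gradients and agree $m$-a.e.\ agree outside a set of zero $p$-capacity. That requires $u$ itself to be Newtonian, and $\one_{\{1\}}$ on $\bR$ has no $p$-integrable $p$-weak upper gradient at all. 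Separately, $\rho+\infty\cdot\rho_0$ should read $\rho+\varepsilon\rho_0$, with $\rho_0\in L^p$ and $\int_\gamma\rho_0\dif s=\infty$ on the null family.

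The paper's argument for this direction has the same problem. It passes from $\abs{\nabla u}_{S,p}$ to the minimal $p$-weak upper gradient ``of $u$'' via \cite[Proposition 6.3.22]{HKST15}, and then to genuine upper gradients ``of $u$'' via \cite[Lemma 6.2.2]{HKST15}. But, as the paper itself records, $\abs{\nabla u}_{S,p}$ is a $p$-weak upper gradient of $\wt u$. What comes out are therefore upper gradients of $\wt u$, which need not satisfy $\wt u\le0$ on $E$ and $\wt u\ge1$ on $F$. The example above shows the identity genuinely fails in the stated generality.

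What does hold is $\wh{\operatorname{cap}}_p\le\operatorname{cap}_p=\Mod_p$ (your first two steps), together with a repaired third step. Suppose $\rho$ is a $p$-upper gradient of $u$, $u\le0$ $m$-a.e.\ on an open neighbourhood of $E$, and $u\ge1$ $m$-a.e.\ on an open neighbourhood of $F$; then $\Mod_p(E,F)\le\int_X\rho^p\dif m$. To see this, discard two $\Mod_p$-null families: curves having a subcurve in the exceptional family, and curves spending positive length in a Borel $m$-null set that contains the points of these neighbourhoods where $\wt u$ violates the boundary condition. Any remaining arc-length parametrized $\gamma:[a,b]\to X$ from $E$ to $F$ has parameters $t$ arbitrarily close to $a$ and $s$ arbitrarily close to $b$ with $\wt u(\gamma(t))\le0$ and $\wt u(\gamma(s))\ge1$. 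Applying the inequality for $\wt u$ on $\gamma|_{[t,s]}$ gives $\int_\gamma\rho\dif s\ge1$. This weaker form is all that the proof of Theorem~\ref{t.main}-\ref{it.main1} needs, once $K$ and $G$ there are replaced by a slightly larger compact set and a slightly smaller open set.
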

\begin{proof}
We first prove that $\wh{\operatorname{cap}}_{p}(E, F)\leq \operatorname{cap}_{p}(E, F)$. If $\operatorname{cap}_{p}(E, F)=\infty$, then there is nothing to prove. If $\operatorname{cap}_{p}(E, F)<\infty$, then for any $\epsilon\in(0,\infty)$, there exists a real-valued function  $u$ on $ X $ such that $\restr{u}{E} \leq 0$, $\restr{u}{F}\geq 1$; and there exists an upper gradient $\rho$ of $u$ such that $\int_{ X  }\rho^{p}\dif m<\operatorname{cap}_{p}(E, F)+\epsilon$. Evidently, $\rho$ is also a $p$-upper gradient of $u$. Therefore $\wh{\operatorname{cap}}_{p}(E, F)\leq \int_{ X  }\rho^{p}\dif m<\operatorname{cap}_{p}(E, F)+\epsilon$. Let $\epsilon\downarrow0$, we obtain $\wh{\operatorname{cap}}_{p}(E, F)\leq \operatorname{cap}_{p}(E, F)$.

We next prove that $\wh{\operatorname{cap}}_{p}(E, F)\geq \operatorname{cap}_{p}(E, F)$. Assume that $\wh{\operatorname{cap}}_{p}(E, F)<\infty$, then for any $\epsilon\in(0,\infty)$, there exists a real-valued function $u$ on $ X $ such that $\restr{u}{E} \leq 0$, $\restr{u}{F}\geq 1$; and there exists a $p$-upper gradient $\rho$ of $u$ such that $\int_{ X  }\rho^{p}\dif m<\wh{\operatorname{cap}}_{p}(E, F)+\epsilon$. Let $\abs{\nabla u}_{S,p}$ be the minimal $p$-upper gradient of $u$, then by definition \begin{equation}\label{e.mdcpa}
	\int_{X}\abs{\nabla u}_{S,p}^{p}\dif m\leq \int_{ X  }\rho^{p}\dif m<\wh{\operatorname{cap}}_{p}(E, F)+\epsilon,
\end{equation}and $\abs{\nabla u}_{S,p}$ is a \emph{$p$-weak upper gradient} for $\wt{u}$ (see \cite[Definition at p.~152]{HKST15} for definition), where $\wt{u}$ is the function in the definition of $p$-upper gradient such that $u=\wt{u}$ $m$-a.e.. Let $\rho_{u}$ be the minimal $p$-weak upper gradient of $u$ in \cite[p.~161]{HKST15}, then by \cite[Proposition 6.3.22]{HKST15}, we have $\rho_{u}=\abs{\nabla u}_{S,p}$ $m$-a.e.. So \begin{equation}\label{e.mdcpb}
	\int_{ X  }\rho_{u}^{p}\dif m=\int_{X}\abs{\nabla u}_{S,p}^{p}\dif m\overset{\eqref{e.mdcpa}}{<}\wh{\operatorname{cap}}_{p}(E, F)+\epsilon.
\end{equation} By \cite[Lemma 6.2.2]{HKST15}, there exist upper gradients $\rho_{k}$ of $u$ such that \begin{equation}\label{e.mdcpc}
	\lim_{k\to\infty}\int_{ X  }\rho_{k}^{p}\dif m=\int_{ X  }\rho^{p}_{u}\dif m.
\end{equation} By \eqref{e.mdcpb} and \eqref{e.mdcpc}, we have \begin{equation}
	\operatorname{cap}_{p}(E, F)\leq \lim_{k\to\infty}\int_{ X  }\rho_{k}^{p}\dif m<\wh{\operatorname{cap}}_{p}(E, F)+\epsilon. 
\end{equation} Letting $\epsilon\downarrow0$, we obtain $\wh{\operatorname{cap}}_{p}(E, F)\geq \operatorname{cap}_{p}(E, F)$. Combining this with the reverse inequality gives the first equality in \eqref{e.cap1=2}. The second equality $\Mod_{p}(E,F)=\operatorname{cap}_{p}(E, F)$ is proved in \cite[Theorem 7.31]{Hei01}.
\end{proof}

The following lemma provides a convenient decomposition of a metric doubling space. Recall that a metric space $(X,d)$ is called \emph{metric doubling} if there exists $N\in\bN$ such that every ball in $(X,d)$ of radius $R>0$ can be covered by at most $N$ balls of radius $R/2$. By \cite[p.~102]{HKST15}, if a metric measure space $(X,d,m)$ is \ref{VD}, then the metric space $(X,d)$ is metric doubling.

\begin{lemma}[{See \cite[Lemma 37]{ACDM15}}]\label{l.cov}
	Let $(X,d)$ be a metric doubling and separable metric space. Then for any $\delta>0$, there exist $\ell_{\delta}\in\bN\cup\{\infty\}$ and pairs $\{(A_{i}^{\delta},z_{i}^{\delta})\}_{0\leq i<\ell_{\delta}}$, where $A_{i}^{\delta}\subset  X $ are Borel sets and $z_{i}^{\delta}\in  X  $ satisfying \begin{enumerate}[label=\textup{({\arabic*})},align=right,leftmargin=*,topsep=5pt,parsep=0pt,itemsep=2pt]
	\item\label{it.cov1} The sets $\{A_{i}^{\delta}\}_{0\leq i<\ell_{\delta}}$ are a partition of $ X  $, that is they are pairwise disjoint, $ X  =\bigcup_{0\leq i<\ell_{\delta}}A_{i}^{\delta}$. Moreover, $d(z_{i}^{\delta},z_{j}^{\delta})>\delta$ whenever $i\neq j$.
	\item\label{it.cov2} $A_{i}^{\delta}$ are comparable to balls centred at $z_{i}^{\delta}$, namely \begin{equation}\label{e.nested}
		B\big(z_{i}^{\delta},\frac{\delta}{3}\big)\subset A_{i}^{\delta}\subset B\big(z_{i}^{\delta},\frac{5}{4}\delta\big),\ \text{for all }0\leq i<\ell_{\delta}.
	\end{equation}
	\item\label{it.cov3} For any $K\in(0,\infty)$, there exists a number $N(K)\in\bN$ depending only on $K$, such that \begin{equation}\label{e.bolp}
		\sum_{0\leq i<\ell_{\delta}}\one_{B(z_{i}^{\delta},K\delta)}\leq N(K)<\infty.
	\end{equation}
\end{enumerate}
We say that $A_{i}^{\delta}$ is a \emph{neighbour} of $A_{j}^{\delta}$, and we denote by $A_{i}^{\delta}\sim A_{j}^{\delta}$, if their distance is less than $\delta$.  In particular $A_{i}^{\delta}\sim A_{j}^{\delta}$ implies that $d(z_{i}^{\delta}, z_{j}^{\delta})<4\delta$.
\end{lemma}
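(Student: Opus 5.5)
The plan is to build the partition greedily from a maximal $\delta$-separated net, assigning each point to a nearby net point in a Borel way. Since $(X,d)$ is separable, Zorn's lemma (or a greedy pass through a countable dense set) gives a maximal set $\{z_i^\delta\}_{0\le i<\ell_\delta}$ with $d(z_i^\delta,z_j^\delta)>\delta$ for $i\neq j$. Separability forces such a set to be countable, so we may index it by an initial segment of $\bN\cup\{0\}$; this gives $\ell_\delta\in\bN\cup\{\infty\}$. By maximality, every $x\in X$ satisfies $d(x,z_i^\delta)\le\delta$ for some $i$, so the closed balls $\ol B(z_i^\delta,\delta)$ cover $X$.

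We then define the cells inductively, giving priority to the small balls. Set
\[
A_i^\delta:=\Bigl(B(z_i^\delta,\tfrac{\delta}{3})\cup\ol B(z_i^\delta,\delta)\Bigr)\setminus\Bigl(\bigcup_{j<i}A_j^\delta\cup\bigcup_{j\neq i}B(z_j^\delta,\tfrac{\delta}{3})\Bigr).
\]
Each $A_i^\delta$ is Borel, being built from countably many open and closed balls. The sets are pairwise disjoint by construction, and they cover $X$: a point lying in some $B(z_j^\delta,\delta/3)$ is assigned to that $j$, and every other point is taken by the first $i$ with $d(x,z_i^\delta)\le\delta$.

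Next we check \ref{it.cov2}. The small balls $B(z_i^\delta,\delta/3)$ are pairwise disjoint, since $d(z_i^\delta,z_j^\delta)>\delta>2\delta/3$, so $B(z_i^\delta,\delta/3)\subset A_i^\delta$ is not removed by the subtraction. Conversely, $A_i^\delta\subset\ol B(z_i^\delta,\delta)\subset B(z_i^\delta,\tfrac54\delta)$, which gives \eqref{e.nested}.

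For \ref{it.cov3}, fix $x$ and look at the indices $i$ with $x\in B(z_i^\delta,K\delta)$. The corresponding points $z_i^\delta$ are $\delta$-separated and all lie in $B(x,K\delta)$. Metric doubling, iterated $\lceil\log_2(4K)\rceil$ times, covers $B(x,K\delta)$ by at most $N^{\lceil\log_2(4K)\rceil}$ balls of radius at most $\delta/4$. Each such ball contains at most one $z_i^\delta$, so this number is the bound $N(K)$; it depends only on $K$ and the doubling constant.

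Finally, the neighbour remark. If $\dist(A_i^\delta,A_j^\delta)<\delta$, pick $a\in A_i^\delta$ and $b\in A_j^\delta$ with $d(a,b)<\delta$. Then
\[
d(z_i^\delta,z_j^\delta)\le\tfrac54\delta+\delta+\tfrac54\delta<4\delta.
\]

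The only delicate point is Borel measurability together with the covering property for an infinite index set. Both are handled by the countability of the net, which comes from separability, and by the explicit formula above, so I expect no real obstacle.
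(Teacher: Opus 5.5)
Your proposal is correct and follows essentially the same route as the paper. Parts (1)–(2) are the standard maximal $\delta$-net construction of \cite[Lemma 37]{ACDM15}, which the paper cites and you carry out explicitly, and your proof of (3), counting $\delta$-separated centres inside $B(x,K\delta)$ via metric doubling, is the paper's argument. One small remark: use Zorn's lemma rather than a greedy pass through a countable dense set. The greedy version only directly gives $\dist(x,\{z_i^\delta\})\le\delta$ as an infimum, and upgrading this to $d(x,z_i^\delta)\le\delta$ for some $i$ needs the local finiteness of the net, which follows from doubling.
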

\begin{proof}
	The assertions \ref{it.cov1} and \ref{it.cov2} are proved in \cite[Lemma 37]{ACDM15}. We now prove \ref{it.cov3}. Let $y\in X$. Define \begin{equation}
		I_{y}:=\Sett{z\in \{z_{i}^{\delta}\}_{0\leq i<\ell_{\delta}}}{y\in B(z_{i}^{\delta},K\delta)}.
	\end{equation}
	 Note that $I_{y}\subset B(y, K\delta)$, and for every $z,w \in I_{y}$ with $z\neq w$, we have $d(z,w)\geq \delta$. By \cite[Exercise 10.17]{Hei01}, we know that there exists $N(K)\in\bN$ depending only on $K$ such that $\# I_{y}\leq N(K)$. By noting that $\sum_{0\leq i<\ell_{\delta}}\one_{B(z_{i}^{\delta},K\delta)}(y)=\# I_{y}$, we complete the proof of \eqref{e.bolp}. 
\end{proof}
Let $u\in L^{p}( X  ,m)$. We define the average $u_{\delta,i}$ of $u$ in each cell of the partition by \begin{equation}
	u_{\delta,i}:=\fint_{A_{i}^{\delta}}u\dif m=\frac{1}{m(A_{i}^{\delta})}\int_{A_{i}^{\delta}}u\dif m.
\end{equation} We then define a function $\abs{\sD_{\delta}u}$ by \begin{equation}
	\abs{\sD_{\delta}u}^{p}(x):=\sum_{0\leq i<\ell_{\delta}}\frac{1}{\delta^{p}}\one_{A_{i}^{\delta}}(x)\Big(\sum_{j: A_{i}^{\delta}\sim A_{j}^{\delta}}\abs{u_{\delta,i}-u_{\delta,j}}^{p}\Big), \ \text{for all } x\in  X  .
\end{equation} 
We also define an approximate gradient by \begin{align}
	\rF_{\delta}(u)&:=\int_{ X  }\abs{\sD_{\delta}u}^{p}(x)\dif m(x)=\sum_{0\leq i<\ell_{\delta}}\frac{1}{\delta^{p}}m(A_{i}^{\delta})\Big(\sum_{j: A_{i}^{\delta}\sim A_{j}^{\delta}}\abs{u_{\delta,i}-u_{\delta,j}}^{p}\Big).
\end{align}

We recall the definition of \emph{$\Gamma$-convergence}. Let $(Y,d_Y)$
be a metric space, and let $F_j:Y\to[-\infty,\infty]$, $j\in\bN$, be
a sequence of functions. We say that $(F_j)_{j\in\bN}$
\emph{$\Gamma$-converges} to a function
$F:Y\to[-\infty,\infty]$ if the following conditions hold:
\begin{enumerate}[label=\textup{(\roman*)},align=right,leftmargin=*,topsep=5pt,parsep=0pt,itemsep=2pt]
    \item For every $y\in Y$ and every sequence $(y_j)_{j\in\bN}
    \subset Y$ converging to $y$,
    \begin{equation}\label{e.gamc1}
        F(y)\leq\liminf_{j\to\infty}F_j(y_j).
    \end{equation}

    \item For every $y\in Y$, there exists a sequence
    $(y_j)_{j\in\bN}\subset Y$ converging to $y$ such that
    \begin{equation}\label{e.gamc2}
        F(y)\geq\limsup_{j\to\infty}F_j(y_j).
    \end{equation}
    Such a sequence is called a \emph{recovery sequence} for $y$.
\end{enumerate}

\begin{lemma}[{Cf. \cite[Theorem 40]{ACDM15}}]
	Let $\{\delta_{k}\}_{k\in\bN}$ be any sequence such that $\delta_{k}\downarrow 0$ as $k\uparrow\infty$. Let $\rF$ be a $\Gamma$-limit of $\rF_{\delta_{k}}$ as $k\to\infty$. Then there exists $\eta\in[1,\infty)$ depending only on $C_{\mathrm{VD}}$ and $p$ such that \begin{equation}\label{e.Dis=Cont}
		\eta^{-1} \mathrm{Ch}_{p}(u)\leq \rF(u)\leq \eta \mathrm{Ch}_{p}(u),\ \text{ for all } u\in L^{p}( X  ,m).
	\end{equation}
\end{lemma}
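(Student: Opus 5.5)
This lemma is essentially a restatement of \cite[Theorem~40]{ACDM15}, adapted to our notation. The plan is to check that the objects here coincide with those in \cite{ACDM15}: the partition of Lemma~\ref{l.cov}, the discrete energies $\rF_{\delta}$, and the Cheeger energy $\mathrm{Ch}_{p}$ defined via minimal $p$-upper gradients. Once they match, the two-sided comparison follows from their argument.

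\emph{Setting up compactness.} Since $L^{p}(X,m)$ is separable (because $(X,d)$ is separable and $m$ is locally finite), Kuratowski compactness of $\Gamma$-convergence shows that every sequence $\rF_{\delta_k}$ has a $\Gamma$-convergent subsequence. The statement is about an arbitrary $\Gamma$-limit $\rF$, so we fix one, and all that remains is the two-sided bound \eqref{e.Dis=Cont}.

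\emph{Lower bound $\eta^{-1}\mathrm{Ch}_p\le\rF$.} Take $u_k\to u$ in $L^p$ with $\liminf_k\rF_{\delta_k}(u_k)<\infty$. For each $k$, build a Lipschitz function $\tilde u_k$ from $u_k$ using a partition of unity subordinate to the balls $B(z_i^{\delta_k},2\delta_k)$, with weights given by the cell averages $u_{\delta_k,i}$. By the bounded overlap \eqref{e.bolp}, its slope on $A_i^{\delta_k}$ is at most $C\delta_k^{-1}\sum_{j\sim i}|u_{\delta_k,i}-u_{\delta_k,j}|$. Hence
\[
\int|\Lip\tilde u_k|^p\,\dif m\le C\,\rF_{\delta_k}(u_k).
\]
One also has $\|\tilde u_k-u_k\|_{L^p}\lesssim\delta_k\rF_{\delta_k}(u_k)^{1/p}+o(1)$, so $\tilde u_k\to u$ in $L^p$. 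Lower semicontinuity of $\mathrm{Ch}_p$ in $L^p$, together with $\mathrm{Ch}_p(\tilde u_k)\le\int|\Lip\tilde u_k|^p\,\dif m$, then gives the lower bound. Doubling (\ref{VD} and metric doubling) enters only through the overlap constant $N(K)$ and comparisons of $m(A_i)$ with neighbouring cells, so $\eta$ depends only on $C_{\mathrm{VD}}$ and $p$.

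\emph{Upper bound $\rF\le\eta\,\mathrm{Ch}_p$.} We need a recovery sequence, and the constant sequence $u_k=u$ is the natural choice. The key estimate is
\[
\rF_{\delta}(u)\le C\int|\nabla u|_{S,p}^p\,\dif m+o(1).
\]
To obtain it, first use density in energy of Lipschitz functions with $\int\Lip_a(u_n)^p\,\dif m\to\mathrm{Ch}_p(u)$, a deep result of \cite{ACDM15} which I take as part of their Theorem~40. For a Lipschitz $u$ and neighbouring cells, bound $|u_{\delta,i}-u_{\delta,j}|$ by averaged differences over $B(z_i,6\delta)$. Then estimate these differences by $\delta\,\sup_{B(z_i,C\delta)}\Lip_a u$, where $\Lip_a$ denotes the asymptotic Lipschitz constant. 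Letting $\delta\to0$ and applying upper semicontinuity of $\Lip_a$ gives $\limsup_\delta\rF_\delta(u)\le C\int\Lip_a(u)^p\,\dif m$. A diagonal argument then transfers this to general $u$, using $\Gamma$-limsup lower semicontinuity.

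\emph{Expected obstacle.} The main obstacle is this upper bound. Its naive form needs a Poincar\'e inequality, which we do not have. \cite{ACDM15} circumvent this through the identification $\mathrm{Ch}_p=\int\Lip_a^p$ on approximating sequences (their density-in-energy theorem), which holds on doubling spaces without any Poincar\'e assumption. I would cite that result rather than reprove it.
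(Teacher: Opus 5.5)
Your overall strategy is the paper's: reduce to \cite[Theorem~40]{ACDM15} after matching the objects. The gap is that the one matching which actually needs an argument is announced and then never carried out. The partition of Lemma~\ref{l.cov} and the energies $\rF_{\delta}$ are taken from \cite{ACDM15}, but the Cheeger energy is not. Theorem~40 there is proved for $\mathrm{Ch}_{p}$ built from the minimal $p$-relaxed slope, a relaxation over Lipschitz approximants. Here, by contrast, $\mathrm{Ch}_{p}$ is built from $\abs{\nabla u}_{S,p}$, the minimal $p$-upper gradient valid outside a $\Mod_{p}$-null curve family. That these coincide $m$-a.e.\ is a genuine theorem: the equivalence of the relaxation and weak-upper-gradient approaches. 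The paper obtains it from \cite[(9.1), (9.4), (9.5) and Theorem~35]{ACDM15}, and without it Theorem~40 does not apply to the functional in the statement. Your upper bound shows exactly where this is needed. The density in energy you invoke (Lipschitz $u_{n}\to u$ in $L^{p}$ with $\int\Lip_{a}(u_{n})^{p}\,\dif m\to\mathrm{Ch}_{p}(u)$) is a statement about the relaxed Cheeger energy of \cite{ACDM15}. For the modulus-based $\mathrm{Ch}_{p}$ it amounts precisely to the missing identification. It is not part of Theorem~40, and citing it as such assumes what had to be checked.

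Two smaller points. First, your lower-bound argument can be run directly for the modulus-based $\mathrm{Ch}_{p}$, and this is the only direction used in the proof of Theorem~\ref{t.main}-\ref{it.main1}. It works because pointwise Lipschitz constants of locally Lipschitz functions are upper gradients, and the Newtonian energy is lower semicontinuous in $L^{p}$ for $p>1$ by Mazur's and Fuglede's lemmas. However, the partition of unity must couple only cells with $A_{i}^{\delta}\sim A_{j}^{\delta}$. Supports in $B(z_{i}^{\delta},2\delta)$ do not ensure this, since $d(z_{i}^{\delta},z_{j}^{\delta})<4\delta$ does not imply $\dist(A_{i}^{\delta},A_{j}^{\delta})<\delta$; supports in the $\delta/2$-neighbourhoods of the cells do. Second, your displayed ``key estimate'' for the constant sequence is not what your argument proves, and there is no reason to expect it without a Poincar\'e inequality. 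For Lipschitz $u$ you only get $\limsup_{\delta\downarrow0}\rF_{\delta}(u)\lesssim\int\Lip_{a}(u)^{p}\,\dif m$. The bound $\rF\le\eta\,\mathrm{Ch}_{p}$ comes from lower semicontinuity of the $\Gamma$-limit along the Lipschitz approximants, not from a constant recovery sequence.
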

\begin{proof}
	In \cite[Theorem 40]{ACDM15}, the inequalities \eqref{e.Dis=Cont} are proved for the Cheeger energy $\mathrm{Ch}_{p}$ defined using \emph{minimal $p$-relaxed slope} (cf. \cite[Definition 15]{ACDM15}) and \emph{minimal $p$-weak upper gradient} (cf. \cite[Definition 21]{ACDM15} and \cite[p.~161]{HKST15}). Nevertheless, the minimal $p$-relaxed slope coincides with the minimal $p$-upper gradient by \cite[(9.1), (9.4), (9.5) and Theorem 35; argument at p.~54]{ACDM15}.
\end{proof}
The following elementary lemma, which compares two standard forms of oscillation, is well known.
\begin{lemma}
	For any $p\in[1,\infty)$, any $f\in L^{p}( X  ,m)$ and any ball $B=B(x,r)$, we have \begin{align}\label{e.PIcomp}
		 2^{-p}\fint_{B}\abs{f-f_{B}}^{p}\dif m&\leq \fint_{B}\fint_{B}\abs{f(x)-f(y)}^{p}\dif m(y)\dif m(x)\leq \fint_{B}\abs{f-f_{B}}^{p}\dif m.
	\end{align}
\end{lemma}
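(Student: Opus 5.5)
The plan is to prove both inequalities of \eqref{e.PIcomp} directly from Jensen's inequality and the elementary convexity bound $\abs{a+b}^{p}\leq 2^{p-1}(\abs{a}^{p}+\abs{b}^{p})$, working only on the fixed ball $B$ with the probability measure $m(B)^{-1}\one_{B}\,\dif m$. As I explain below, the left inequality holds as printed, but the right inequality does not hold as printed and the argument only gives it with an extra factor $2^{p}$.

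\emph{Left inequality.} Fix $x\in B$ and write $f(x)-f_{B}=\fint_{B}(f(x)-f(y))\dif m(y)$. Jensen's inequality for the convex map $t\mapsto\abs{t}^{p}$ gives
\begin{equation}
\abs{f(x)-f_{B}}^{p}\leq\fint_{B}\abs{f(x)-f(y)}^{p}\dif m(y).
\end{equation}
Averaging in $x$ over $B$ yields $\fint_{B}\abs{f-f_{B}}^{p}\dif m\leq\fint_{B}\fint_{B}\abs{f(x)-f(y)}^{p}\dif m(y)\dif m(x)$. This is stronger than the printed left inequality, since $2^{-p}\leq1$.

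\emph{Right inequality.} Write $f(x)-f(y)=(f(x)-f_{B})-(f(y)-f_{B})$. The convexity bound gives
\begin{equation}
\abs{f(x)-f(y)}^{p}\leq 2^{p-1}\bigl(\abs{f(x)-f_{B}}^{p}+\abs{f(y)-f_{B}}^{p}\bigr).
\end{equation}
Averaging in both variables, each of the two terms contributes $\fint_{B}\abs{f-f_{B}}^{p}\dif m$. Hence
\begin{equation}
\fint_{B}\fint_{B}\abs{f(x)-f(y)}^{p}\dif m(y)\dif m(x)\leq 2^{p}\fint_{B}\abs{f-f_{B}}^{p}\dif m.
\end{equation}

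\emph{Main obstacle.} The constant $2^{p}$ in the last display cannot be removed, so the printed right inequality (with constant $1$) is false in general. Already for $p=2$, expanding the square gives
\begin{equation}
\fint_{B}\fint_{B}\abs{f(x)-f(y)}^{2}\dif m(y)\dif m(x)=2\fint_{B}\abs{f-f_{B}}^{2}\dif m,
\end{equation}
which exceeds the printed right-hand side whenever $f$ is not a.e.\ constant on $B$. What can be established is therefore
\begin{equation}
\fint_{B}\abs{f-f_{B}}^{p}\dif m\leq\fint_{B}\fint_{B}\abs{f(x)-f(y)}^{p}\dif m(y)\dif m(x)\leq 2^{p}\fint_{B}\abs{f-f_{B}}^{p}\dif m.
\end{equation}
This two-sided comparability, with constants depending only on $p$, is presumably all that is needed later, where \eqref{e.PIcomp} is combined with $\mathrm{PI}_{p}(\Psi)$.
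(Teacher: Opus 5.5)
Your proposal is correct and follows the paper's own argument. H\"{o}lder/Jensen gives $\fint_{B}\abs{f-f_{B}}^{p}\dif m\leq\fint_{B}\fint_{B}\abs{f(x)-f(y)}^{p}\dif m(y)\dif m(x)$, and the bound $\abs{a+b}^{p}\leq 2^{p-1}(\abs{a}^{p}+\abs{b}^{p})$ gives the reverse comparison. Your diagnosis of the constant is also right: the paper's computation ends with $2^{p}m(B)\int_{B}\abs{f-f_{B}}^{p}\dif m$, so after dividing by $m(B)^{2}$ it proves only $\fint_{B}\fint_{B}\abs{f(x)-f(y)}^{p}\dif m(y)\dif m(x)\leq 2^{p}\fint_{B}\abs{f-f_{B}}^{p}\dif m$, and the printed right inequality with constant $1$ fails for $p=2$, exactly as you show; since the lemma is later invoked only through $\lesssim$, your corrected two-sided comparison is all that is needed.
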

\begin{proof}
By triangle inequality,
	\begin{align}
		&\phantom{\ \leq}\int_{B}\int_{B}\abs{f(x)-f(y)}^{p}\dif m(y)\dif m(x)\\
		&\leq 2^{p-1}\int_{B}\int_{B}\abs{f(x)-f_{B}}^{p}+\abs{f(y)-f_{B}}^{p}\dif m(y)\dif m(x)\\
		&=2^{p}m(B)\int_{B}\abs{f(x)-f_{B}}^{p}\dif m(x).
	\end{align}
	Dividing $m(B)^{2}$ on both sides, we obtain the second inequality of \eqref{e.PIcomp}.
	On the other hand, \begin{align}
		&\phantom{\ \leq}\int_{B}\abs{f(x)-f_{B}}^{p}\dif m(x)=\int_{B}\abs{\fint_{B}(f(x)-f(y))\dif m(y)}^{p}\dif m(x)\\
		&\leq \int_{B}\fint_{B}\abs{f(x)-f(y)}^{p}\dif m(y)\dif m(x) \quad \text{(by H\"older's inequality)}.
	\end{align}
	Dividing $m(B)$ on both sides, we obtain the first inequality of \eqref{e.PIcomp}.
\end{proof}
We are ready to prove Theorem \ref{t.main}-\ref{it.main1}.
\begin{proof}[Proof of Theorem \ref{t.main}-\ref{it.main1}]
	Let $\delta\in(0,\infty)$. Let $\{(A_{i}^{\delta},z_{i}^{\delta})\}_{0\leq i<\ell_{\delta}}$ be the pairs of Borel sets and points in Lemma \ref{l.cov}. For any $i$ and $j$ such that $A_{i}^{\delta}\sim A_{j}^{\delta}$, we have by Lemma \ref{l.cov} that \begin{equation}\label{e.union}
		A_{i}^{\delta}\cup A_{j}^{\delta}\subset B(z_{i}^{\delta},5\delta).
	\end{equation} By the volume doubling property \ref{VD}, there exists a constant $C_{1}\in(1,\infty)$ such that \begin{equation}\label{e.volm1}
		C_{1}^{-1}m(B\big(z_{i}^{\delta},5\delta \big))\overset{\eqref{VD}}{\leq }m(B\big(z_{i}^{\delta},\frac{\delta}{3}\big))\overset{\eqref{e.nested}}{\leq} m(A_{i}^\delta)\overset{\eqref{e.nested}}{\leq} m(B\big(z_{i}^{\delta},\frac{5}{4}\delta\big))\overset{\eqref{VD}}{\leq }C_1m(B\big(z_{i}^{\delta},\frac{\delta}{3}\big))
	\end{equation}
	and \begin{equation}\label{e.volm2}
	C_{1}^{-1}m(B\big(z_{i}^{\delta},\frac{\delta}{3}\big)) \overset{\eqref{e.nested},\eqref{e.union}}{\leq} C_{1}^{-1}m(B\big(z_{j}^{\delta},5\delta \big))\overset{\eqref{VD}}{\leq }	m(B\big(z_{j}^{\delta},\frac{\delta}{3}\big))\overset{\eqref{e.nested}}{\leq} m(A_{j}^\delta).
	\end{equation}
	Then \begin{align}
		\abs{u_{\delta,i}-u_{\delta,j}}^{p}&=\abs{\fint_{A_{i}^{\delta}}u\dif m-\fint_{A_{j}^{\delta}}u\dif m}^{p}\\
		&\leq\fint_{A_{i}^{\delta}}\fint_{A_{j}^{\delta}}\abs{u(x)-u(y)}^{p}\dif m(x)\dif m(y) \ \ \text{(by H\"older's inequality)}\\
		&{\lesssim} \fint_{B(z_{i}^{\delta},5\delta)}\fint_{B(z_{i}^{\delta},5\delta)}\abs{u(x)-u(y)}^{p}\dif m(x)\dif m(y)\ \ \text{(by \eqref{e.union}, \eqref{e.volm1} and \eqref{e.volm2})}\\
		&\overset{\eqref{e.PIcomp}}{\lesssim} \fint_{B(z_{i}^{\delta},5\delta)}\abs{u-u_{B(z_{i}^{\delta},5\delta)}}^{p}\dif m.\label{e.vmpi}
	\end{align}
	Therefore for any $u\in\sF_{p}$, \begin{align}
		\rF_{\delta}(u)&=\sum_{0\leq i<\ell_{\delta}}\frac{1}{\delta^{p}}m(A_{i}^{\delta})\Big(\sum_{j: A_{i}^{\delta}\sim A_{j}^{\delta}}\abs{u_{\delta,i}-u_{\delta,j}}^{p}\Big)\\
		&\overset{\eqref{e.volm1},\eqref{e.vmpi}}{\lesssim}\sum_{0\leq i<\ell_{\delta}}\frac{1}{\delta^{p}}\int_{B(z_{i}^{\delta},5\delta)}\abs{u-u_{B(z_{i}^{\delta},5\delta)}}^{p}\dif m \overset{\hyperlink{PI}{\mathrm{PI}_{p}(\Psi)}}{\lesssim}\sum_{0\leq i<\ell_{\delta}}\frac{\Psi(5\delta)}{\delta^{p}}\int_{B(z_{i}^{\delta},5A_{\mathrm{PI}}\delta)}\dif \Gamma_{p}\langle u\rangle\\
		&\overset{\eqref{e.scale}}{\lesssim} \frac{\Psi(\delta)}{\delta^{p}}\int_{ X  }\sum_{0\leq i<\ell_{\delta}}\one_{B(z_{i}^{\delta},5A_{\mathrm{PI}}\delta)}\dif \Gamma_{p}\langle u\rangle \overset{\eqref{e.bolp}}{\leq} CN(5A_{\mathrm{PI}})\cdot\frac{\Psi(\delta)}{\delta^{p}}\sE_{p}(u).\label{e.F<Energy}
	\end{align}
	
	By \eqref{e.ma1}, there exists a positive sequence $\{\delta_{k}\}_{k\in\bN}\subset (0,\infty)$ such that $\delta_{k}\downarrow0$ as $k\uparrow\infty$, and $\lim_{k\to\infty}\delta_{k}^{-p}\Psi(\delta_{k})=0$. Along this sequence, since $u$ itself converges to $u$ in $L^{p}(X,m)$, we have \begin{align}
		\mathrm{Ch}_{p}(u)&\overset{\eqref{e.Dis=Cont}}{\leq} \eta \rF(u)\\
		& \overset{\eqref{e.gamc1}}{\leq}\eta\liminf_{k\to\infty}\rF_{\delta_{k}}(u)\ \text{ (by definition of $\Gamma$-convergence) } \\
		&\overset{\eqref{e.F<Energy}}{\leq}C\eta \liminf_{k\to\infty}\delta_{k}^{-p}\Psi(\delta_{k})\sE_{p}(u)=0\ \text{ (since $\sE_{p}(u)<\infty$)}.
	\end{align}
	Therefore \begin{equation}\label{e.Ch=0}
		\text{$\mathrm{Ch}_{p}(u)=0$ for all $u\in\sF_{p}$.}
	\end{equation} 	
	
	We now prove \ref{VMp}. Let $K$ be a compact subset of $ X  $ and $G$ be a relatively compact open subset of $ X  $ with $K\Subset G$. Then by \cite[Proposition 3.28]{KS26} (see also \cite[Proposition A.4]{CY26}), there exists $\varphi\in\sF_{p}$ such that $\restr{\varphi}{K}=1$ $m$-a.e. and $\restr{\varphi}{X  \setminus G}=0$ $m$-a.e.. By Lemma \ref{l.Mod=cap}, we have \begin{equation}
		0\leq \Mod_{p}(K, X\setminus G)\overset{\eqref{e.cap1=2}}{=}\wh{\operatorname{cap}}_{p}(K, X\setminus G)\overset{\eqref{e.cap<Ch}}{\leq} \mathrm{Ch}_{p}(\varphi)\overset{\eqref{e.Ch=0}}{=}0.
	\end{equation} Taking a countable base of $ X  $ and use the sub-additivity of $\Mod_{p}$ (see \cite[Eq.~(5.2.6)]{HKST15}), we  obtain \ref{VMp}.
	\end{proof}

We now prove Theorem \ref{t.main}-\ref{it.main2}. We first record the following lemma, which follows from the McShane--Whitney extension theorem \cite[p.~99]{HKST15}, together with the fact that a locally Lipschitz function is Lipschitz on compact subsets.
\begin{lemma}
	Let $(X,d)$ be a metric space. Then for every $u\in \Lip_{\loc}(X,d)$ and every compact set $K\subset X$, there exists $v\in \Lip(X,d)$ such that $\restr{v}{K}=\restr{u}{K}$.
\end{lemma}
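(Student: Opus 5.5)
The plan is to first show that $u$ is Lipschitz on $K$, and then extend $\restr{u}{K}$ to all of $X$ by the McShane--Whitney theorem.

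\emph{Step 1: a uniform local Lipschitz radius.} For each $x\in K$, choose an open ball $B(x,r_x)$ on which $u$ is Lipschitz with some constant $L_x$. The balls $B(x,r_x/2)$ cover $K$, so by compactness finitely many of them, $B(x_k,r_k/2)$ with $k=1,\dots,n$, already cover $K$. Put
\[
\delta:=\min_k r_k/2,\qquad L_0:=\max_k L_{x_k}.
\]
Let $y,z\in K$ with $d(y,z)<\delta$, and pick $k$ with $y\in B(x_k,r_k/2)$. Then $d(z,x_k)<r_k/2+\delta\le r_k$, so both points lie in $B(x_k,r_k)$. Hence $\abs{u(y)-u(z)}\le L_0\, d(y,z)$.

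\emph{Step 2: the far pairs.} The function $u$ is continuous, so it is bounded on $K$, say by $M=\sup_K\abs{u}<\infty$. If $d(y,z)\ge\delta$, then
\[
\abs{u(y)-u(z)}\le 2M\le \frac{2M}{\delta}\,d(y,z).
\]
Combining the two steps, $u$ is Lipschitz on $K$ with constant $L:=\max(L_0,2M/\delta)$.

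\emph{Step 3: extension.} Define
\[
v(x):=\inf_{y\in K}\bigl(u(y)+L\,d(x,y)\bigr).
\]
This is the McShane extension from \cite[p.~99]{HKST15}. It is real-valued, $L$-Lipschitz on $X$, and agrees with $u$ on $K$. If $K=\emptyset$, take $v\equiv0$ instead.

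The only point needing care is Step 1. A naive covering only gives Lipschitz control on each ball separately. The halving of the radii is what ensures that every sufficiently close pair of points of $K$ lies in a single ball of the cover.
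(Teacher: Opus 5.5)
Your proof is correct and takes the same route as the paper, which simply combines the fact that a locally Lipschitz function is Lipschitz on compact sets with the McShane--Whitney extension. Your Steps 1--2 (the half-radius covering argument for close pairs and the bound $2M/\delta$ for far pairs) fill in the detail the paper leaves implicit.
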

We record the following definition of pointwise upper Lipschitz-constant and the restricted capacities.
\begin{definition}
	Let $(X,d,m)$ be a metric measure space.
	\begin{enumerate}[label=\textup{({\arabic*})},align=right,leftmargin=*,topsep=5pt,parsep=0pt,itemsep=2pt]
	\item For every $u: X\to\bR$, we define the \emph{pointwise upper Lipschitz-constant function} of $u$ by \begin{equation}
		\lip{d}{u}(x):=\limsup_{r\downarrow 0}\sup_{y\in B(x,r)}\frac{\abs{u(x)-u(y)}}{d(x,y)},\ x\in X.
	\end{equation}
	\item For every two disjoint closed sets $E$ and $F$ in $X$, we define \begin{align}
		&\phantom{:=}\operatorname{cap}^{\mathrm{(lip,lip)}}_{p}(E,F)\\
		&:= \inf\Biggl\{\int_{X}g^{p}\dif m   \Biggm|
  \begin{minipage}{250pt}
$g\in \Lip_{\loc}(X,d)$ is an upper gradient for some $u\in \Lip_{\loc}(X,d)$ such that $\restr{u}{E}=0 $ and $\restr{u}{F}=1$ \end{minipage}
\Biggr\}
	\end{align}
\end{enumerate}
\begin{remark}
	\begin{enumerate}[label=\textup{({\arabic*})},align=left,leftmargin=*,topsep=5pt,parsep=0pt,itemsep=2pt]
	\item By \cite[Lemma 4.1.2]{Kei04}, for any $u\in \Lip_{\loc}(X,d)$, the function $\lip{d}{u}$ is Borel measurable.
	\item Suppose $(X,d)$ is geodesic, suppose $u\in \Lip_{\loc}(X,d)$ and suppose $g\in \Lip_{\loc}(X,d)$ is an upper gradient for $u$. Then as in \cite[Proof of Lemma~11]{Kei03},\begin{equation}\label{e.lip<g}
		\lip{d}{u}(x)\leq g(x), \ \text{ for all } x\in X.
	\end{equation} 
	In fact, let $x,y\in X$ and let $\gamma_{x,y}$ be a geodesic connecting $x$ and $y$. Then $\mathrm{Length}(\gamma_{x,y})=d(x,y)$. Since $g$ is an upper gradient, we have \begin{equation}
		\frac{\abs{u(x)-u(y)}}{d(x,y)}\leq \frac{1}{d(x,y)}\int_{\gamma_{x,y}}g \dif s = \fint_{\gamma_{x,y}}g \dif s
	\end{equation}
	Letting $y\to x$ and using the continuity of $g$, we obtain $\lip{d}{u}(x)\leq g(x)$.
\end{enumerate}

\end{remark}
\end{definition}
\begin{definition}\label{d.floc}
Let $(X,d,m,\sE_{p},\sF_{p},\Gamma_{p})$ be a regular local $p$-Dirichlet space. Define \begin{equation}
		\sF_{p,\loc}:=
  \Biggl\{f\in L^{p}_{\mathrm{loc}}( X ,m)  \Biggm|
  \begin{minipage}{260pt}
For any relatively compact open subset $A$ of $(X,d)$,  there exists $f^{\#}\in \sF_{p}$ such that $\restr{f^{\#}}{A}=\restr{f}{A}$ $m$-a.e.
\end{minipage}
\Biggr\}
	\end{equation}

\end{definition}
\begin{proposition}\label{p.Lip<Sob}
Let $(X,d)$ be a complete metric space that satisfies the chain condition. Let $(X,d,m,\sE_{p},\sF_{p},\Gamma_{p})$ be a regular local $p$-Dirichlet space that satisfies \ref{VD}, $\hyperlink{PI}{\mathrm{PI}_{p}(\Psi)}$ and \hyperlink{cap<}{$\operatorname{Cap}_{p}(\Psi)_{\leq}$}. If the scale function $\Psi$ satisfies \begin{equation}\label{e.sup>0}
	\limsup_{r\downarrow 0}\frac{\Psi(r)}{r^{p}}>0,
\end{equation}
then the following hold:

\begin{enumerate}[label=\textup{({\arabic*})},align=right,leftmargin=*,topsep=5pt,parsep=0pt,itemsep=2pt]
	\item\label{it.lip<sob1} The set $\sF_{p,\loc}$ contains all locally Lipschitz functions, that is, $\Lip_{\loc}(X,d)\subset \sF_{p,\loc}$. Moreover, there exists $C\in[1,\infty)$ such that for every $u\in\Lip_{\loc}(X,d)$,\begin{equation}\label{e.sob1}
		\carre{u}\ll m\ \text{ and }\ \frac{\dif\carre{u}}{\dif m}(x)\leq C\cdot \lip{d}{u}(x)^{p},\ m\text{-a.e. }x\in X.
	\end{equation}
	\item\label{it.lip<sob2} The metric space $(X,d)$ supports a weak $(1,p)$-Poincar\'{e} inequality: there exist $C\in(0,\infty)$ such that for every $(x,r)\in X\times (0,\infty)$ and for every $u\in\Lip_{\loc}(X,d)$, \begin{equation}\label{e.sob2}
		\fint_{B(x,r)}\abs{u-u_{B(x,r)}}\dif m \leq C\Psi(r)^{1/p}\left( \fint_{B(x,A_{\mathrm{PI}} r)}\lip{d}{u}^{p}\dif m\right)^{1/p}.
	\end{equation}
	where $A_{\mathrm{PI}}\in(1,\infty)$ is the constant appeared in $\hyperlink{PI}{\mathrm{PI}_{p}(\Psi)}$.
	\item\label{it.lip<sob3} There exists $R_{0}\in(0,\diam(X,d))$ and $C\in(0,\infty)$ such that \begin{equation}\label{e.sob3}
		C^{-1}r^{p}\leq \Psi(r)\leq C r^{p}\ \text{ for all }r\in(0,R_{0}].
	\end{equation}
\end{enumerate}
\end{proposition}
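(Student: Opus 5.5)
The plan is to build, at a scale $\delta$, a discrete "convolution" of a Lipschitz function using cutoff functions from \hyperlink{cap<}{$\operatorname{Cap}_{p}(\Psi)_{\leq}$}. We then pass to the limit along a sequence $\delta_{k}\downarrow0$ chosen via \eqref{e.sup>0}, so that $\Psi(\delta_k)\geq c\,\delta_k^{p}$, and conclude with the lower semicontinuity axiom (7) of a regular local $p$-Dirichlet space.

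\textbf{Part \ref{it.lip<sob1}.} Fix the partition $\{(A_i^\delta,z_i^\delta)\}$ of Lemma~\ref{l.cov}. For each $i$, the capacity upper bound together with the Markov property (5) (truncation to $[0,1]$) gives $\psi_i\in\sF_p$ with $0\le\psi_i\le1$, $\psi_i=1$ on $B(z_i^\delta,\delta)$, $\psi_i=0$ off $B(z_i^\delta,A\delta)$, and $\sE_p(\psi_i)\lesssim m(B(z_i^\delta,\delta))/\Psi(\delta)$. Set $\Psi_\delta:=\sum_j\psi_j$, which lies in $[1,N(A)]$ by \eqref{e.bolp}, and normalize $\varphi_i:=\psi_i/\Psi_\delta$. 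The quotient stays in $\sF_{p,\loc}$ with comparable energy measure by the chain rule, using (4), (5) and the bounds $1\le\Psi_\delta\le N(A)$.

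For a Lipschitz $u$ (reduced to this case on compacts by the McShane--Whitney lemma and locality (6)), put $u_\delta:=\sum_i u(z_i^\delta)\varphi_i$. Since $\sum_j\varphi_j\equiv1$, on $B(z_i^\delta,\delta)$ we have $u_\delta-u(z_i^\delta)=\sum_j (u(z_j^\delta)-u(z_i^\delta))\varphi_j$. Here only boundedly many $j$ contribute, each with $\abs{u(z_j^\delta)-u(z_i^\delta)}\lesssim \delta\,\mathrm{LIP}[u|_{B(z_i^\delta,C\delta)}]$. Hence
\begin{equation}
\carre{u_\delta}(B(z_i^\delta,\delta))\lesssim \frac{\delta^{p}}{\Psi(\delta)}\,\mathrm{LIP}[u|_{B(z_i^\delta,C\delta)}]^{p}\, m(B(z_i^\delta,\delta)).
\end{equation}
Along $\delta_k$ the prefactor is bounded and $u_{\delta_k}\to u$ in $L^p_{\loc}$. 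Axiom (7) then gives $u\in\sF_{p,\loc}$ and $\carre{u}(B(x,r))\lesssim \mathrm{LIP}[u|_{B(x,Cr)}]^{p}\,m(B(x,r))$. By \ref{VD} and Lebesgue differentiation this yields $\carre{u}\ll m$ with density bounded by $C\,\mathrm{Lip}_{a}u(x)^{p}$, where $\mathrm{Lip}_a u(x):=\lim_{r\downarrow0}\mathrm{LIP}[u|_{B(x,r)}]$ is the asymptotic Lipschitz constant.

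\textbf{Main obstacle: replacing $\mathrm{Lip}_a u$ by $\lip{d}{u}$.} What I would try first is to exploit that the energy measure is defined by a local, subadditive (in the sense of (4)) structure. I would cover the set $\{\lip{d}{u}<t\}$ by small balls on which $u$ is approximated by a function with small Lipschitz constant, using the chain condition (which makes $d$ comparable to a length metric, so that difference quotients along chains are controlled by the pointwise $\lip{d}{u}$). I would then use the triangle inequality (4) for $\Gamma_p$ together with locality (6) to transfer the bound. If this fails, the fallback is a Keith-type argument \cite{Kei04} showing $\mathrm{Lip}_a u\lesssim\lip{d}{u}$ $m$-a.e. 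That argument needs a Poincar\'e inequality, which is what part \ref{it.lip<sob2} provides in the $\mathrm{Lip}_a$ form, so one would first prove \eqref{e.sob2} with $\mathrm{Lip}_a$ and then bootstrap.

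\textbf{Part \ref{it.lip<sob2}.} Given \ref{it.lip<sob1}, apply $\hyperlink{PI}{\mathrm{PI}_{p}(\Psi)}$ to $u^{\#}\in\sF_p$ agreeing with $u$ near $B(x,A_{\mathrm{PI}}r)$. Locality (6) applied to $u-u^{\#}$, combined with the triangle inequality (4), identifies $\carre{u^{\#}}$ with $\carre{u}$ on that open set. Then use \eqref{e.sob1}, divide by $m(B(x,r))$, invoke \ref{VD} to compare $m(B(x,r))$ with $m(B(x,A_{\mathrm{PI}}r))$, and finish with H\"older's inequality to pass from the $L^p$ to the $L^1$ oscillation.

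\textbf{Part \ref{it.lip<sob3}.} For the lower bound $\Psi(r)\gtrsim r^p$, test \eqref{e.sob2} with $u=d(x,\cdot)$, which has $\lip{d}{u}\le1$. The chain condition and \ref{VD} ensure that $B(x,r)$ contains a set of proportional measure at distance $\ge r/4$ from $x$, for $r<R_0$ with $R_0<\diam(X,d)$, so the left side of \eqref{e.sob2} is $\gtrsim r$.

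For the upper bound I would compare capacities, though I am less sure this route closes cleanly. First, the weak $(1,p)$-Poincar\'e inequality at the good scales $\delta_k$, where $\Psi(\delta_k)\asymp\delta_k^p$, together with completeness, \ref{VD} and the Heinonen--Koskela chaining argument, should give $\operatorname{cap}_p(B(x,r),X\setminus B(x,2r))\gtrsim m(B(x,r))/r^{p}$ for all small $r$. Second, the capacity upper bound \hyperlink{cap<}{$\operatorname{Cap}_{p}(\Psi)_{\leq}$} supplies $\sF_p$-cutoffs with energy $\lesssim m/\Psi(r)$. Third, the construction of \ref{it.lip<sob1} at scale $\delta_k$ shows that energy measures of such discrete convolutions dominate their Lipschitz upper gradients up to constants. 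Chaining these three gives $m/r^p\lesssim m/\Psi(r)$, that is $\Psi(r)\lesssim r^p$.
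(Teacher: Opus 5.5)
Your part (2) matches the paper's. Your discrete-convolution construction in part (1) is a legitimate hands-on alternative to the paper's citations: the paper obtains a cutoff Sobolev inequality from \cite[Theorem 1.4]{EB26} and then quotes \cite{Yan25b} for (1) and \cite[Lemma 4.1]{Yan25b} for (3). However, two steps of your proposal do not go through.

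\textbf{From $\mathrm{Lip}_a$ to $\lip{d}{u}$ in part (1).} Your construction yields only $\frac{\dif\carre{u}}{\dif m}\le C\,\mathrm{Lip}_{a}u^{p}$, with $\mathrm{Lip}_{a}u(x)=\lim_{r\downarrow0}\mathrm{LIP}[u|_{B(x,r)}]$. The statement \eqref{e.sob1} uses $\lip{d}{u}$, and your fallback bridge $\mathrm{Lip}_{a}u\lesssim\lip{d}{u}$ $m$-a.e.\ is false. A counterexample already exists on $\bR$ with Lebesgue measure, $p=2$ and $\Psi(r)=r^{2}$. Let $U\subset\bR$ be open and dense with complement $K$ of positive measure, and let $u(t)=\int_{0}^{t}\one_{U}\dif s$. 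Then $\mathrm{Lip}_{a}u=1$ on all of $K$, since every interval meets $U$. But $\lip{d}{u}(x)=\abs{u'(x)}=0$ for a.e.\ $x\in K$. Keith's results compare the upper and lower pointwise constants at the same point; they say nothing about $\mathrm{Lip}_{a}$.

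Your other suggestion does not control energy if \emph{approximated} means uniformly close. It can be made to work as follows. Split $\{\lip{d}{u}<t\}$ into Borel pieces $E$ on which $u$ is $t$-Lipschitz at a fixed small scale. McShane-extend $u|_{E}$ to a $t$-Lipschitz $v$, so that $\carre{v}(E)\le Ct^{p}m(E)$, and transfer this bound to $u$. The transfer needs the Borel-set locality $\carre{u-v}(\{u=v\})=0$. This is not among the axioms (axiom (6) only concerns open sets), so it would have to be proved. The $\mathrm{Lip}_{a}$ version would in fact suffice for the later use in Theorem~\ref{t.main}-\ref{it.main2}, where $g$ is continuous and the space is geodesic. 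It is not, however, the statement.

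\textbf{The upper bound $\Psi(r)\le Cr^{p}$ in part (3).} This bound is not established, and your route is circular. Hypothesis \eqref{e.sup>0} only gives scales with $\Psi(\delta_{k})\ge c\,\delta_{k}^{p}$. Nothing bounds $\Psi(\delta_{k})/\delta_{k}^{p}$ from above, so ``$\Psi(\delta_{k})\asymp\delta_{k}^{p}$'' is unjustified.

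Your step (iii) needs exactly such an upper bound. The discrete convolution of $f\in\sF_{p}$ at scale $\delta$ has local Lipschitz constant $\lesssim\delta^{-1}\Psi(\delta)^{1/p}(\carre{f}(B)/m(B))^{1/p}$ on balls $B$ of radius comparable to $\delta$. Likewise, the Heinonen--Koskela telescoping in step (i) needs \eqref{e.sob2} with $\Psi(s)^{1/p}\lesssim s$ at every dyadic scale $s\le r$, which is the conclusion itself.

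The missing idea is the almost-monotonicity $\Psi(r)/r^{p}\le C\,\Psi(R)/R^{p}$ for $0<r\le R$ below a fixed multiple of the diameter. It uses only $\operatorname{VD}$, the chain condition, $\mathrm{PI}_{p}(\Psi)$ and $\operatorname{Cap}_{p}(\Psi)_{\le}$:
\begin{enumerate}[label=\textup{(\alph*)},align=right,leftmargin=*,topsep=5pt,parsep=0pt,itemsep=2pt]
\item Fill the region between $B(x,R/2)$ and $X\setminus B(x,R)$ with $N\asymp R/r$ shells of width $\asymp r$.
\item Build a cutoff across each shell from capacity-upper-bound cutoffs of balls of radius $\asymp r$. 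Its energy is $\lesssim m(\text{shell neighbourhood})/\Psi(r)$.
\item Average the $N$ shell cutoffs. By axiom (4), locality and bounded overlap, the average has energy $\lesssim N^{-p}m(B(x,R))/\Psi(r)$.
\item On the other hand, $\mathrm{PI}_{p}(\Psi)$ on $B(x,2R)$, with $\operatorname{VD}$ and the chain condition, forces every such cutoff to have energy $\gtrsim m(B(x,R))/\Psi(R)$.
\end{enumerate}
Taking $R=R_{0}$ gives the upper bound in (3). Combined with \eqref{e.sup>0}, the same monotonicity also gives the lower bound, without using (1) or (2). Your lower-bound argument with $u=d(x,\cdot)$ is fine as it stands, and it only needs the $\mathrm{Lip}_{a}$ version of (2).
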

\begin{proof}
	\begin{enumerate}[label=\textup{({\arabic*})},align=right,leftmargin=*,topsep=5pt,parsep=0pt,itemsep=2pt]
	\item[\ref{it.lip<sob1}]By \cite[Theorem 1.4]{EB26}, we know that \ref{VD}, $\hyperlink{PI}{\mathrm{PI}_{p}(\Psi)}$ and \hyperlink{cap<}{$\operatorname{Cap}_{p}(\Psi)_{\leq}$} together imply a \emph{cutoff Sobolev inequality} (see \cite[p.~4]{EB26} and \cite[p.~4]{Yan25b} for definition)x. By \ref{VD}, $\hyperlink{PI}{\mathrm{PI}_{p}(\Psi)}$ and the cutoff Sobolev inequality, we know from \cite[Theorems 2.4 and 2.5]{Yan25b} that $\Lip_{\loc}(X,d)\subset \sF_{p,\loc}$ and \eqref{e.sob1} holds.
	\item[\ref{it.lip<sob2}] By the locality of energy measures, the definition of $\sF_{p,\loc}$ in Definition \ref{d.floc}, and the precompactness of each metric ball, we know that $\hyperlink{PI}{\mathrm{PI}_{p}(\Psi)}$ actually holds for all functions in $\sF_{p,\loc}$. Now fix $u\in\Lip_{\loc}(X,d)$, we have \begin{align}
		\fint_{B(x,r)}\abs{u-u_{B(x,r)}}\dif m &\leq \left(\fint_{B(x,r)}\abs{u-u_{B(x,r)}}^{p}\dif m )\right)^{1/p} \ \text{ (Jensen's inequality)}\\
		&\leq C^{1/p}\Psi(r)^{1/p} \left(\fint_{B(x,A_{\mathrm{PI}} r)}\dif\carre{u}\right)^{1/p} \ \text{ ($\hyperlink{PI}{\mathrm{PI}_{p}(\Psi)}$ for $\sF_{p,\loc}$)}\\
		&\leq C_{1}\Psi(r)^{1/p}\left( \fint_{B(x,A_{\mathrm{PI}} r)}\lip{d}{u}^{p}\dif m\right)^{1/p} \ \text{ (by \eqref{e.sob1})}.
	\end{align}
	This proves \eqref{e.sob2}.
	\item[\ref{it.lip<sob3}] This is proved in \cite[Lemma 4.1]{Yan25b}.
\qedhere
\end{enumerate}
\end{proof}
The following lemma is adapted from \cite[Theorem 5.9]{HK98}.
\begin{lemma}
	Let $(X,d)$ be a complete metric space that satisfies the chain condition. Let $(X,d,m,\sE_{p},\sF_{p},\Gamma_{p})$ be a regular local $p$-Dirichlet space that satisfies \ref{VD}, $\hyperlink{PI}{\mathrm{PI}_{p}(\Psi)}$ and \hyperlink{cap<}{$\operatorname{Cap}_{p}(\Psi)_{\leq}$}. Suppose the scale function $\Psi$ satisfies \eqref{e.sup>0}. Let $R_{0}$ be the constant appearing in Proposition \ref{p.Lip<Sob}-\ref{it.lip<sob3}. Then there is a constant $C_{0}\in(1,\infty)$ such that, for any $(z,r)\in X\times(0,R_{0}]$, any two Borel subsets $E,F\subset B(z,r)$, and any $u\in\Lip_{\loc}(X,d)$ with $\restr{u}{E}\leq0$ and $\restr{u}{F}\geq1$, we have\begin{equation}\label{e.mod1}
	\int_{B(z,10A_{\mathrm{PI}} r)}\lip{d}{u}^{p}\dif m\geq C_{0}^{-1}\frac{m(E)\vee m(F)}{r^{p}}.
\end{equation}
\end{lemma}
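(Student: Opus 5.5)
The plan is to follow the pointwise telescoping argument of \cite[Theorem~5.9]{HK98}, using the weak $(1,p)$-Poincar\'{e} inequality \eqref{e.sob2} from Proposition~\ref{p.Lip<Sob} with $\Psi(s)^{1/p}\lesssim s$ for $s\le R_{0}$ by \eqref{e.sob3}. However, I expect this route to give \eqref{e.mod1} only with $m(E)\wedge m(F)$ on the right. I also believe the stated bound with $m(E)\vee m(F)$ fails in general (see the end of this plan), so the proof as written cannot be completed.

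Fix $(z,r)$ with $r\le R_{0}$ and $u\in\Lip_{\loc}(X,d)$, and write $g:=\lip{d}{u}$. For $x\in B(z,r)$, set $B_{k}:=B(x,2^{1-k}r)$ for $k\ge0$, so that $B_{0}=B(x,2r)\subset B(z,3r)$. Since $u$ is continuous, $u_{B_{k}}\to u(x)$. Applying \eqref{e.sob2} to consecutive balls, together with \ref{VD}, should give
\begin{equation}
\abs{u(x)-u_{B(z,3r)}}\le C\sum_{k\ge0}2^{-k}r\Big(\fint_{A_{\mathrm{PI}}B_{k}}g^{p}\dif m\Big)^{1/p}+C\,r\Big(\fint_{B(z,3A_{\mathrm{PI}}r)}g^{p}\dif m\Big)^{1/p}.
\end{equation}
Both terms are bounded by $C r\,\big(M_{4A_{\mathrm{PI}}r}(g^{p}\one_{B(z,10A_{\mathrm{PI}}r)})(x)\big)^{1/p}$, where $M_{R}$ denotes the restricted maximal operator over balls of radius at most $R$.

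Next, split into two cases. If $u_{B(z,3r)}\ge1/2$, then $\abs{u(x)-u_{B(z,3r)}}\ge1/2$ for every $x\in E$. Otherwise the same lower bound holds for every $x\in F$. Call the resulting set $H\in\{E,F\}$. On $H$ the maximal function of $g^{p}\one_{B(z,10A_{\mathrm{PI}}r)}$ exceeds $c\,r^{-p}$. The weak-type $(1,1)$ estimate for the maximal operator on the doubling space then yields
\begin{equation}
\int_{B(z,10A_{\mathrm{PI}}r)}g^{p}\dif m\ge C_{0}^{-1}\frac{m(H)}{r^{p}}\ge C_{0}^{-1}\frac{m(E)\wedge m(F)}{r^{p}}.
\end{equation}

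This case split is the step that would have to be upgraded to obtain $m(E)\vee m(F)$, and I expect that to be the main obstacle. Indeed I believe it cannot be done. In $\bR^{3}$ with $p=2$, Lebesgue measure and the standard Dirichlet energy, all hypotheses hold with $\Psi(r)=r^{2}$. Take $F=B(z,r/2)$, $E=B(e,\epsilon)$ for some $e\in F$, and let $u$ be $0$ on $E$, $1$ outside $B(e,2\epsilon)$, and linear in $d(\cdot,e)$ in between. Then the left side of \eqref{e.mod1} is of order $\epsilon^{3-2}\to0$, whereas $m(F)/r^{2}\simeq r$ stays bounded below, so \eqref{e.mod1} fails as $\epsilon\downarrow0$. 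Unless the two sets are assumed to have comparable measure, I would therefore prove and use \eqref{e.mod1} with $m(E)\wedge m(F)$, which is presumably what is needed later for $E,F$ comparable to balls.
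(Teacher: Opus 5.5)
Your diagnosis is correct. With $m(E)\vee m(F)$ the statement is false, and the paper's own proof breaks exactly where you expect.

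The paper splits according to whether the set $A$ of pairs $(x,y)\in E\times F$ with $\abs{u(x)-u_{B(x,r)}}\vee\abs{u(y)-u_{B(y,r)}}\le\frac15$ is empty.
\begin{enumerate}[label=\textup{(\roman*)},align=right,leftmargin=*,topsep=5pt,parsep=0pt,itemsep=2pt]
\item If $A\neq\emptyset$, a single good pair and \eqref{e.sob2} on $B(z,2r)$ give $\int_{B(z,2A_{\mathrm{PI}}r)}\lip{d}{u}^{p}\dif m\gtrsim r^{-p}m(B(z,2A_{\mathrm{PI}}r))$. This does dominate $m(E)\vee m(F)$.
\item If $A=\emptyset$, you only learn that $\abs{u(\cdot)-u_{B(\cdot,r)}}>\frac15$ holds at every point of $E$ \emph{or} at every point of $F$. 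The paper asserts it on all of $E$, runs the telescoping and $5r$-covering argument, and then says ``the same conclusion holds for $F$''. That step does not follow.
\end{enumerate}
In your example every $x\in E$ has $\abs{u(x)-u_{B(x,r)}}\approx1$, while every $y\in F$ has $\abs{u(y)-u_{B(y,r)}}\approx0$. So you land in case (ii) with only $m(E)$ under control, and that is genuinely all one gets.

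Your argument is the same Heinonen--Koskela mechanism, reorganized: you split on $u_{B(z,3r)}\ge\frac12$, telescope along $B(x,2^{1-k}r)$, and apply the weak type $(1,1)$ bound for the restricted maximal operator. The paper's Vitali covering step is just the proof of that weak-type bound. Your version correctly yields \eqref{e.mod1} with $m(E)\wedge m(F)$. For the top scales $2r$ and $3r$, which may exceed $R_{0}$, combine \eqref{e.scale} with \eqref{e.sob3} to keep $\Psi(s)^{1/p}\lesssim s$.

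The weaker bound is all that is used later. The proof of Theorem~\ref{t.main}-\ref{it.main2} only needs $\Mod_{p}(E,F)>0$ for some disjoint closed $E,F\subset B(z,r)$ of positive measure, e.g.\ two disjoint small closed balls.

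Your counterexample needs one repair. As written, $E=B(e,\epsilon)\subset F=B(z,r/2)$ once $\epsilon$ is small, so no $u$ can satisfy $\restr{u}{E}\le0$ and $\restr{u}{F}\ge1$; your $u$ is $<1$ on $F\cap B(e,2\epsilon)$. Take instead $F:=B(z,r/2)\setminus\overline{B(e,2\epsilon)}$. Then:
\begin{enumerate}[label=\textup{(\roman*)},align=right,leftmargin=*,topsep=5pt,parsep=0pt,itemsep=2pt]
\item $u=1$ on $F$, and $m(F)\gtrsim r^{3}$ for $\epsilon\ll r$.
\item $\lip{d}{u}\le\epsilon^{-1}\one_{\overline{B(e,2\epsilon)}}$, because $u$ is locally constant off the closed annulus.
\item The left side of \eqref{e.mod1} is therefore at most $\frac{32\pi}{3}\epsilon$, while the right side is $\gtrsim C_{0}^{-1}r$.
\end{enumerate}
All hypotheses hold on $\bR^{3}$ with $p=2$ and $\Psi(r)=r^{2}$, for every $r\in(0,R_{0}]$. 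Letting $\epsilon\downarrow0$ disproves the $\vee$ version.
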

\begin{proof}
	Let $(z,r)\in X\times(0,R_{0}]$ and let $E,F\subset B(z,r)$ be two Borel subsets. Fix $u\in\Lip_{\loc}(X,d)$ with $\restr{u}{E}\leq0$ and $\restr{u}{F}\geq1$. We consider the set \begin{equation}
		A:=\Sett{(x,y)\in E\times F}{\abs{u(x)-u_{B(x,r)}}\vee \abs{u(y)-u_{B(y,r)}}\leq \frac{1}{5}}.
	\end{equation}
	\begin{enumerate}[label=\textit{Case {\arabic*}.},align=right,leftmargin=*,topsep=5pt,parsep=0pt,itemsep=2pt]
	\item Suppose $A\neq \emptyset$. Let $(x_{0},y_{0})\in A$. Then \begin{align}
		1\leq \abs{u(x_{0})-u(y_{0})}&\leq \frac{1}{5}+\abs{u_{B(x_{0},r)}-u_{B(y_{0},r)}}+\frac{1}{5}\\
		&\leq \frac{2}{5}+\fint_{B(x_{0},r)}\fint_{B(y_{0},r)}\abs{u(x)-u(y)}\dif m(y)\dif m(x)\\
		&\overset{\eqref{VD}}{\leq} \frac{2}{5}+C\fint_{B(z,2r)}\fint_{B(z,2r)}\abs{u(x)-u(y)}\dif m(y)\dif m(x)\\
		&\leq \frac{2}{5}+C\fint_{B(z,2r)}\abs{u-u_{B(z,2r)}}\dif m\\
		&\overset{\eqref{e.sob2},\eqref{e.sob3}}{\leq} \frac{2}{5}+C_{1}r\left( \fint_{B(z,2A_{\mathrm{PI}} r)}\lip{d}{u}^{p}\dif m\right)^{1/p}
	\end{align}
	By rearranging terms, we obtain \eqref{e.mod1}. 
	\item Suppose $A=\emptyset$. Then for every $x\in E$,\begin{align}
		\frac{1}{5}\leq \abs{u(x)-u_{B(x,r)}}&=\sum_{j=0}^{\infty}\abs{u_{B(x,2^{-j}r)}-u_{B(x,2^{-j-1}r)}}\\
		&\leq C_{2}\sum_{j=0}^{\infty}\fint_{B(x,2^{-j}r)}\abs{u-u_{B(x,2^{-j}r)}}\dif m\\
		&\overset{\eqref{e.sob2},\eqref{e.sob3}}{\leq} C_{3}\sum_{j=0}^{\infty}(2^{-j}r)\left( \fint_{B(x,A_{\mathrm{PI}}2^{-j}r)}\lip{d}{u}^{p}\dif m\right)^{1/p}
	\end{align}
	Therefore, there exists $j_{x}\in\bN\cup\{0\}$ such that \begin{equation}\label{e.mod2}
		C_{4}(2^{-j_{x}}r)\left( \fint_{B(x,A_{\mathrm{PI}}2^{-j_{x}}r)}\lip{d}{u}^{p}\dif m\right)^{1/p}\geq 2^{-j_{x}}.
	\end{equation}
	Since $\{B(x,A_{\mathrm{PI}}2^{-j_{x}}r)\}_{x\in E}$ is a covering for $E$, by \cite[Theorem 1.2]{Hei01}, there exists an at most countable set $E_{0}\subset E$ such that \begin{equation}\label{e.mod3}
		E\subset \bigcup_{x\in E_{0}}B(x,5A_{\mathrm{PI}}2^{-j_{x}}r), \text{ and }\{B(x,A_{\mathrm{PI}}2^{-j_{x}}r)\}_{x\in E_{0}} \text{ are mutually disjoint}.
	\end{equation}
	Note that \begin{align}
		\int_{B(z,10A_{\mathrm{PI}} r)}\lip{d}{u}^{p}\dif m&\overset{\eqref{e.mod3}}{\geq} \sum_{x\in E_{0}} \int_{B(x,A_{\mathrm{PI}}2^{-j_{x}}r)}\lip{d}{u}^{p}\dif m\\
		&\overset{\eqref{e.mod2}}{\geq} \sum_{x\in E_{0}}C_{4}^{-p}r^{-p} m(B(x,A_{\mathrm{PI}}2^{-j_{x}}r))\\
		&\overset{\eqref{VD}}{\geq} \sum_{x\in E_{0}}C_{5}^{-1}r^{-p} m(B(x,5A_{\mathrm{PI}}2^{-j_{x}}r))\overset{\eqref{e.mod3}}{\geq}C_{5}^{-1}\frac{m(E)}{r^{p}}.
	\end{align}
	The same conclusion holds for $F$ in place of $E$.
	\qedhere
	\end{enumerate}
\end{proof}
\begin{proof}[Proof of Theorem \ref{t.main}-\ref{it.main2}]
	Since $(X,d)$ satisfies the chain condition, by a bi-Lipschitz transform on the metric \cite[Proposition A.1]{KM20}, we may assume that $(X,d)$ is geodesic. Let $(z,r)\in X\times(0,R_{0}]$, where $R_{0}$ is the constant appearing in Proposition \ref{p.Lip<Sob}-\ref{it.lip<sob3}. Let $E,F\subset B(z,r)$ be two disjoint closed subsets. Let $g\in \Lip_{\loc}(X,d)$ be an upper gradient for some $u\in \Lip_{\loc}(X,d)$ such that $\restr{u}{E}=0 $ and $\restr{u}{F}=1$. Then \begin{equation}
		\int_{X}g^{p}\dif m\overset{\eqref{e.lip<g}}{\geq} \int_{B(z,10A_{\mathrm{PI}} r)}\lip{d}{u}^{p}\dif m\overset{\eqref{e.mod1}}{\geq} C_{0}^{-1}\frac{m(E)\vee m(F)}{r^{p}}.
	\end{equation}
	Therefore \begin{equation}\label{e.pf22}
			\operatorname{cap}^{\mathrm{(lip,lip)}}_{p}(E,F)\geq C_{0}^{-1}\frac{m(E)\vee m(F)}{r^{p}},
	\end{equation}
	and consequently \begin{align}
		\Mod_{p}(E,F)&=\operatorname{cap}_{p}(E,F)\\
		&=\operatorname{cap}^{\mathrm{(lip,lip)}}_{p}(E,F)\ \ \text{ (by  \cite[Theorem 1.1]{EBPC24})}\\
		&\overset{\eqref{e.pf22}}{\geq} C_{0}^{-1}\frac{m(E)\vee m(F)}{r^{p}}.
	\end{align}
	In particular, by choosing suitable $E$ and $F$ with positive mass, we know that \ref{VMp} cannot hold. 
\end{proof}
\section{Application to Ahlfors-regular conformal dimension}\label{s.app}

In this section, we prove Theorem \ref{t.dic}. We will also show that every weak tangent of the generalized Sierpi\'{n}ski carpet satisfies \ref{VMp} for every $p\in[1,\infty)$, which gives an alternative proof of its non-minimality for Ahlfors-regular conformal dimension. We first recall the following definitions.
\begin{definition} 
Let $(X,d)$ be a metric space with $\# X\geq2$.
\begin{enumerate}[label=\textup{({\arabic*})},align=right,leftmargin=*,topsep=5pt,parsep=0pt,itemsep=2pt]
\item We say that a metric $\theta$ on $X$ is \emph{quasisymmetric} to $d$, if there exists a homeomorphism of $[0,\infty)$ onto itself, denoted by $\eta$, such that \begin{equation}\label{e.defqs1}
\frac{\theta(x,y)}{\theta(x,y)} \le \eta\left(\frac{d(x,y)}{d(x,z)}\right),\ \text{ for all }x,y,z \in X \text{ with } x \neq z.  
\end{equation}
\item Let $\alpha\in(0,\infty)$. Let $m$ be a Radon measure on $(X,d)$. We say that $(X,d,m)$ is \emph{Ahlfors $\alpha$-regular}, if there exists a constant $C\in(1,\infty)$ such that \begin{equation}
	C^{-1}r^{\alpha}\leq m(B(x,r))\leq Cr^{\alpha},\ \text{ for all }(x,r)\in X\times (0,\diam(X,d)).
\end{equation}
In this case, $m$ is comparable to the $\alpha$-Hausdorff measure on $(X,d)$; see \cite[Exercise 8.11]{Hei01}. The metric space $(X,d)$ is called \emph{Ahlfors $\alpha$-regular}, if there exists such a measure $m$ such that $(X,d,m)$ is {Ahlfors $\alpha$-regular}.

 It is well-known that if $(X,d)$ is {Ahlfors $\alpha$-regular}, then its Hausdorff dimension $\dim_{\mathrm{H}}(X,d)=\alpha$; see \cite[discussion above Exercise 8.11]{Hei01}.

The metric space $(X,d)$ is said to be \emph{Ahlfors regular}, if it is {Ahlfors $\alpha$-regular} for some $\alpha\in(0,\infty)$.  
\item The \emph{Ahlfors-regular conformal dimension} of $(X,d)$ is defined by \begin{equation}
\dim_{\mathrm{ARC}}(X,d):=\inf\Biggl\{\dim_{\mathrm{H}}(X,\theta)\Biggm|
 \begin{minipage}{200pt}
$\theta$ is a metric on $X$ that is quasisymmetric to $d$, and $(X,\theta)$ is Ahlfors regular
\end{minipage}
\Biggr\}.
\end{equation}
\end{enumerate}
\end{definition}
We briefly introduce the definition of \emph{Dirichlet form}. For details, see \cite[Chapter 1]{FOT11}.
\begin{definition}
Given a metric measure space $(X,d,m)$. We say that $(\mathcal{E},\mathcal{F})$ is a \emph{regular Dirichlet form} on $L^{2}(X,m)$, if it satisfies the following conditions:
\begin{enumerate}[label=\textup{({\roman*})},align=right,leftmargin=*,topsep=5pt,parsep=0pt,itemsep=2pt]
    \item $\mathcal{F}$ is a dense linear subspace of $L^{2}(X,m)$;
    \item $\mathcal{E}$ is a non-negative definite symmetric bilinear form on $\mathcal{F}\times\mathcal{F}$;
    \item $\mathcal{F}$ is a Hilbert space with inner product $\mathcal{E}_{1}:=\mathcal{E}+\langle\cdot,\cdot\rangle_{L^{2}(X,m)}$;
    \item for every $u\in \mathcal{F}$, the function $v:=(0\vee u)\wedge1\in\mathcal{F}$ and $\mathcal{E}(v, v)\leq\mathcal{E}(u, u)$;
    \item $\mathcal{F}\cap C_{c}(X)$ is dense both in $(\mathcal{F},\sqrt{\mathcal{E}_{1}})$ and in $(C_{c}(X),\lVert\cdot\rVert_{\sup})$.
\end{enumerate}If, in addition, $(\mathcal{E},\mathcal{F})$ satisfies
\begin{enumerate}[label=\textup{({\roman*})},align=right,leftmargin=*,topsep=5pt,parsep=0pt,itemsep=2pt,resume]
  \item $\mathcal{E}(f, g)=0$ for any $f, g\in\mathcal{F}$ with $\supp_{m}(f)$, $\supp_{m}(g)$ compact, and $\supp_{m}(f-a\one_{X})\cap \supp_{m}(g)=\emptyset$ for some $a\in\mathbb{R}$. Here, $\supp_{m}(f)$ denotes the support of the measure $|f|\dif m$,
\end{enumerate}
then we say $(\mathcal{E},\mathcal{F})$ is \emph{strongly local}. Let $\{P_{t}\}_{t\in(0,\infty)}$ denote the semigroup corresponding to $(\mathcal{E},\mathcal{F})$ on $L^{2}(X,m)$. We say $(\mathcal{E},\mathcal{F})$ is \emph{conservative} if $P_{t}\one_{X}=\one_{X}$ for every $t\in(0,\infty)$. Here, $P_{t}\one_{X}$ is defined by extending $P_{t}$ from $L^{2}(X,m)\cap L^{\infty}(X,m)$ to $L^{\infty}(X,m)$ \cite[p.~6]{CF12}.
\end{definition}

\begin{definition}\label{d:HKE}
Let $(X,d,m)$ be a metric measure space and $\Psi:[0,\infty)\to[0,\infty)$ be a continuous increasing bijection of $[0, \infty)$ onto itself. We say that a Dirichlet form $(\mathcal{E},\mathcal{F})$ on $L^{2}(X,m)$ satisfies the \emph{heat kernel estimates} \hypertarget{HKE} $\hyperlink{HKE}{\mathrm{HKE}(\Psi)}$, if there exists
		$C_{1},c_{2},c_{3}, \delta\in(0,\infty)$ and a heat kernel $\set{p_t}_{t\in(0,\infty)}$ of its semigroup $\{P_{t}\}_{t\in(0,\infty)}$ such that for any $t\in(0,\infty)$,
  		\begin{align}
		p_{t}(x,y) &\leq \frac{C_{1}}{m(B(x,\Psi^{-1}(t)))} \exp\left(-c_{2}t\Phi\left(c_{3}\frac{d(x, y)}{t}\right)\right)
		\qquad \mbox{for $m$-a.e.\ $x,y \in X$}\\
		\text{and }\ p_{t}(x,y) &\geq \frac{C_{1}^{-1}}{m(B(x,\Psi^{-1}(t)))}
		\qquad \mbox{for $m$-a.e.\ $x,y\in X$ with $d(x,y) \leq \delta \Psi^{-1}(t)$},
		\end{align}
		where  \begin{equation}\label{e.phi}
		\Phi(s):=\sup_{r\in(0,\infty)}\left({\frac{s}{r}-\frac{1}{\Psi(r)}}\right),\ s\in[0,\infty).
		\end{equation}
		For $\beta\in(1,\infty)$, we say that \hyperlink{HKE}{$\mathrm{HKE}(\beta)$} holds if $\hyperlink{HKE}{\mathrm{HKE}(\Psi)}$ holds with $\Psi(r)=r^{\beta}$, $r\in[0,\infty)$.
\end{definition}

Before we prove Theorem \ref{t.dic}, we need the following lemma on the relationship between $p$-modulus vanishing properties for different $p$.
\begin{lemma}\label{l.VMq>p}
	Let $(X,d,m)$ be a metric measure space. Let $1\leq p\leq q<\infty$. Then \begin{equation}
		 \hyperlink{VMp}{\mathrm{VM}_q}\text{ holds }\Longrightarrow  \hyperlink{VMp}{\mathrm{VM}_p}\text{ holds}.
	\end{equation}
\end{lemma}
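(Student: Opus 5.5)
The plan is to localize to sets of finite measure and then apply Hölder's inequality; countable subadditivity of $\Mod_{p}$ \cite[Eq.~(5.2.6)]{HKST15} assembles the pieces. Since $(X,d)$ is locally compact and separable, it is $\sigma$-compact. Hence we can fix an increasing exhaustion $X=\bigcup_{n\in\bN}U_{n}$ by relatively compact open sets $U_{n}$. Because $m$ is locally finite, each $U_{n}$ satisfies $m(U_{n})<\infty$.

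\textbf{Degenerate curves.} First we dispose of constant curves. If $\Gamma$ contains a constant curve, then no density is admissible, so $\Mod_{q}(\Gamma)=\infty$. Thus \hyperlink{VMp}{$\mathrm{VM}_{q}$} forces the convention that families under consideration consist of nonconstant curves. The same convention then applies to $\Mod_{p}$, and we assume it throughout.

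\textbf{Decomposition of $\Gamma$.} Fix a family $\Gamma$. Every nonconstant locally rectifiable $\gamma\in\Gamma$ contains a nonconstant compact subcurve $\gamma'$. The image of $\gamma'$ is compact, so it lies in some $U_{n}$. Define:
\begin{itemize}
\item $\Gamma_{n}$, the set of $\gamma\in\Gamma$ possessing such a subcurve $\gamma'$ with image in $U_{n}$;
\item $\Gamma_{n}'$, the family of all these subcurves $\gamma'$.
\end{itemize}
Then $\Gamma=\bigcup_{n}\Gamma_{n}$, up to curves that are not locally rectifiable, which impose no admissibility constraint. Moreover, every curve of $\Gamma_{n}$ has a subcurve in $\Gamma_{n}'$. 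It therefore suffices to show $\Mod_{p}(\Gamma_{n}')=0$. Indeed, any $\rho$ admissible for $\Gamma_{n}'$ is admissible for $\Gamma_{n}$, since $\int_{\gamma}\rho\dif s\geq\int_{\gamma'}\rho\dif s\geq1$. This gives $\Mod_{p}(\Gamma_{n})\leq\Mod_{p}(\Gamma_{n}')$, and subadditivity then yields $\Mod_{p}(\Gamma)=0$.

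\textbf{Hölder step.} Fix $n$ and $\epsilon>0$. By \hyperlink{VMp}{$\mathrm{VM}_{q}$}, there is an admissible $\rho$ for $\Gamma_{n}'$ with $\int_{X}\rho^{q}\dif m<\epsilon$. All curves of $\Gamma_{n}'$ lie in $U_{n}$, so $\rho\one_{U_{n}}$ is still admissible for $\Gamma_{n}'$. Hölder's inequality with exponent $q/p\geq1$ gives
\begin{equation}
\int_{X}(\rho\one_{U_{n}})^{p}\dif m\leq m(U_{n})^{1-p/q}\Big(\int_{X}\rho^{q}\dif m\Big)^{p/q}\leq m(U_{n})^{1-p/q}\epsilon^{p/q}.
\end{equation}
Letting $\epsilon\downarrow0$ gives $\Mod_{p}(\Gamma_{n}')=0$, which completes the argument.

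\textbf{Main obstacle.} The only delicate point is that $m(X)$ may be infinite, so Hölder cannot be applied globally. This is exactly why the passage to subcurves inside the finite-measure sets $U_{n}$ is needed.
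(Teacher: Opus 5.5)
Your proof is correct, but it takes a different route from the paper's. The paper does no localization. It takes $\Gamma$ to be the family of all (nonconstant) curves and applies the Fuglede-type characterization \cite[Lemma 5.2.8]{HKST15}. This turns $\Mod_{q}(\Gamma)=0$ into a single Borel $g$ with $\int_X g^{q}\dif m<\infty$ and $\int_{\gamma}g\dif s=\infty$ on every locally rectifiable $\gamma$. The paper then sets $h:=g^{q/p}$, so that $h\in L^{p}$. On a curve of finite length, the part where $g\le 1$ contributes at most the length. The divergence therefore comes from $\{g>1\}$, where $g^{q/p}\geq g$; infinite-length curves are handled by passing to compact subcurves, which stay in the family of all curves. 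Applying the same lemma with exponent $p$ gives $\Mod_{p}(\Gamma)=0$. That argument never uses finiteness of $m$ on any set, so it needs neither local finiteness of $m$ nor $\sigma$-compactness, but it relies on the nontrivial Fuglede lemma. Your argument uses only the definition of modulus, H\"older's inequality and countable subadditivity. The price is the exhaustion by relatively compact open sets of finite measure and the subcurve decomposition, both available under the paper's standing assumptions. In return you get a quantitative comparison: $\Mod_{p}(\Gamma)\le m(U)^{1-p/q}\Mod_{q}(\Gamma)^{p/q}$ for every family of curves whose images lie in a Borel set $U$ with $m(U)<\infty$. Your explicit treatment of constant curves is also useful. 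The paper leaves this implicit, yet its step ``the family of all curves has zero $q$-modulus'' needs the same convention that constant curves are excluded.
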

\begin{proof}
Assume that $\hyperlink{VMp}{\mathrm{VM}_q}$ holds. Let $\Gamma$ be the family of all curves in $(X,d)$. Then $ \Mod_{q}(\Gamma)=0$. By \cite[Lemma 5.2.8]{HKST15}, there exists a nonnegative Borel function
$g:X\to [0,\infty]$ such that
\[
    \int_{X}g^{q}\dif m<\infty \text{ and }    \int_{\gamma}g\dif s=\infty,\ \text{for every $\gamma\in\Gamma$.}
\]
Define $h:=g^{q/p}$. Then $h$ is a nonnegative Borel function and $\int_X h^p\dif m= \int_X g^q\dif m  <\infty$. We claim that
\[
        \int_{\gamma}h\dif s=\infty,\ \text{for every $\gamma\in\Gamma$.}
\]
Let $\gamma$ be a locally rectifiable curve. We may assume that $\gamma$ has finite length, otherwise we can consider $\restr{\gamma}{[a,b]}\in\Gamma$ for suitable $a<b$. Then $ \int_{\gamma} g\one_{\{g\le 1\}}\dif s \leq \mathrm{Length}(\gamma)<\infty $. Hence, from $\int_{\gamma}g\dif s=\infty$, it follows that $\int_{\gamma} g\one_{\{g>1\}}\dif s=\infty$.
Since $q/p\ge 1$, we have $\restr{g^{q/p}}{\{g>1\}}\geq \restr{g}{\{g>1\}}$. Therefore $\int_{\gamma}h\dif s= \int_{\gamma}g^{q/p}\dif s\geq \int_{\gamma}g\,\one_{\{g>1\}}\dif s=\infty$. Applying \cite[Lemma 5.2.8]{HKST15} again, now with exponent $p$,
we obtain $ \Mod_{p}(\Gamma)=0$. In other words, $\hyperlink{VMp}{\mathrm{VM}_p}$ holds.
\end{proof}

Before proving Theorem~\ref{t.dic}, we recall the notion of a \emph{weak tangent} of a metric space used in \cite[Section~2.4]{KL04}. The Attouch--Wets convergence appearing below is recalled in Definition~\ref{d.AW} in Appendix~\ref{s.apdx}.

\begin{definition}
Let $(X,d)$ be a complete metric space. A complete pointed metric space $(Z,d_Z,z)$ is called a \emph{weak tangent} of $(X,d)$ if there exist sequences $(x_n)_{n\in\bN}\subset X$ and $(r_n)_{n\in\bN}\subset(0,\diam(X,d))$ such that
\begin{equation}
    (X,r_n^{-1}d,x_n)
    \xrightarrow{\mathrm{AW}}
    (Z,d_Z,z).
\end{equation}
\end{definition}

In Theorem~\ref{t.AW=GH} of Appendix~\ref{s.apdx}, we prove that Attouch--Wets convergence implies pointed Gromov--Hausdorff convergence, and that the converse holds after passing to a subsequence.

\begin{lemma}\label{l.dftgt}
Let $(X,d,m)$ be a metric measure space that is proper, length and Ahlfors $\alpha$-regular for some $\alpha\in[1,\infty)$. Let $(\sE,\sF)$ be a conservative, regular and strongly local Dirichlet form on $L^{2}(X,m)$ that satisfies \hyperlink{HKE}{$\mathrm{HKE}(\beta)$} with $\beta\in(1,\infty)$. Let $(Z,d_{Z})$ be a weak tangent of $(X,d)$. Then $(Z,d_Z)$ is a proper, length and Ahlfors $\alpha$-regular metric space, and there exists a conservative, regular and strongly local Dirichlet form $(\sE_{Z},\sF_{Z})$ on $L^{2}(Z,\sH_{\alpha})$ that satisfies \hyperlink{HKE}{$\mathrm{HKE}(\beta)$}, where $\sH_{\alpha}$ is the $\alpha$-Hausdorff measure on $(Z,d_{Z})$.
\end{lemma}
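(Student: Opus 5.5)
We plan to combine the scaling behaviour of Ahlfors regularity and $\mathrm{HKE}(\beta)$ under rescaling with the stability result of \cite{Che26} for sub-Gaussian heat kernel estimates under pointed Gromov--Hausdorff convergence.

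\emph{Rescaled spaces.} Fix sequences $(x_n)\subset X$ and $(r_n)\subset(0,\diam(X,d))$ with $(X,r_n^{-1}d,x_n)\xrightarrow{\mathrm{AW}}(Z,d_Z,z)$. By Theorem~\ref{t.AW=GH}, this convergence also holds in the pointed Gromov--Hausdorff sense. On $X_n:=(X,r_n^{-1}d)$ we use the measure $m_n:=r_n^{-\alpha}m$ and the form $\sE_n:=r_n^{\beta-\alpha}\sE$ on $L^2(X,m_n)$, with the same domain $\sF$. A direct computation shows that:
\begin{itemize}
\item the heat kernel of $(\sE_n,\sF)$ is $p^{(n)}_t(x,y)=r_n^{\alpha}p_{r_n^{\beta}t}(x,y)$;
\item $(X_n,m_n)$ is Ahlfors $\alpha$-regular with the same constant, for radii below $\diam(X,d)/r_n$;
\item each $(\sE_n,\sF)$ satisfies $\mathrm{HKE}(\beta)$ with constants $C_1,c_2,c_3,\delta$ independent of $n$. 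This works because $\Phi$ for $\Psi(r)=r^\beta$ is a power function, so the exponent $t\Phi(c_3d/t)$ is scale invariant.
\end{itemize}
Each $X_n$ is proper and length, and each form remains conservative, regular and strongly local.

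\emph{Geometry of $Z$.} Properness, the length property and Ahlfors $\alpha$-regularity pass to pointed Gromov--Hausdorff limits when the constants are uniform.
\begin{itemize}
\item The length property passes to the limit through approximate midpoints. The limit is complete, hence length.
\item Uniform doubling gives properness.
\item For Ahlfors regularity, we take a subsequence along which $m_n$ converges weakly, through the GH approximations, to a limit measure $m_Z$. The uniform two-sided ball bounds pass to $m_Z$ up to a change of constants, using open/closed ball comparisons at nearby radii. Hence $m_Z\asymp\sH_\alpha$ on $Z$.
\end{itemize}
If $\diam(X,d)<\infty$ and $r_n\not\to0$, the limit is just a rescaled copy of $X$, and the statement is immediate.

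\emph{Heat kernel on $Z$.} This is the main step, and we expect it to be the obstacle. We would invoke the stability theorem of \cite{Che26}: a pmGH-convergent sequence of uniformly doubling, conservative, regular, strongly local Dirichlet spaces with uniform $\mathrm{HKE}(\beta)$ has a subsequence whose forms converge, in the Mosco sense or via heat kernel convergence, to a conservative regular strongly local Dirichlet form on $L^2(Z,m_Z)$ satisfying $\mathrm{HKE}(\beta)$. The difficulty lies in matching hypotheses: \cite{Che26} is formulated for pmGH convergence, which is why the AW/GH equivalence of the appendix is needed, and one must check that the limit measure is the one we constructed. Finally, we transfer the form from $m_Z$ to $\sH_\alpha$. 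Because the two measures are mutually comparable, this cannot be done merely by a constant rescaling. Our first attempt is to apply the stability theorem directly to $\sH_\alpha$-type measures, by normalising $m$ to equal $\sH_\alpha$ on $X$; this is allowed because $m\asymp\sH_\alpha$ and $\mathrm{HKE}$ is stable under bounded changes of measure, via time change or the PI/cutoff-Sobolev characterization. If this fails, we would verify $\mathrm{HKE}(\beta)$ for the measure $\sH_\alpha$ directly on $Z$, using the characterization $\mathrm{HKE}(\beta)\Leftrightarrow\mathrm{VD}+\mathrm{PI}(\beta)+\mathrm{CS}(\beta)$. All three conditions are invariant under replacing the measure by a comparable one, with the energy measure held fixed.
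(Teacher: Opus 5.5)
Your proposal is correct and follows essentially the same route as the paper. You rescale to $(X,r_n^{-1}d,r_n^{-\alpha}m,r_n^{\beta-\alpha}\sE)$ with uniform Ahlfors and $\mathrm{HKE}(\beta)$ constants, pass from Attouch--Wets to pointed Gromov--Hausdorff convergence via Theorem~\ref{t.AW=GH}, invoke the stability theorem of \cite{Che26} to obtain an Ahlfors $\alpha$-regular limit measure $m_Z$ and a limit form with $\mathrm{HKE}(\beta)$, and transfer to the comparable measure $\sH_\alpha$. The differences are minor: you prove properness, the length property and Ahlfors regularity by hand where the paper cites \cite{HKST15} and \cite{Che26}, and your $\mathrm{VD}+\mathrm{PI}(\beta)+\mathrm{CS}(\beta)$ justification of the final change of measure is more explicit than the paper's one-line assertion. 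Your ``first attempt'' of normalising $m=\sH_\alpha$ on $X$ is unnecessary, and would not by itself give $\sH_\alpha$ on $Z$, since Hausdorff measure is not continuous under Gromov--Hausdorff limits.
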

\begin{proof}
Let $C\in[1,\infty)$ be the constant such that \begin{equation}\label{e.ahlfc}
C^{-1}r^{\alpha}\leq m(B(x,r))\leq Cr^{\alpha},\ \text{for all }(x,r)\in X\times (0,\diam(X,d)).
\end{equation}
Suppose $(Z,d_{Z})$ is a weak tangent of $(X,d)$. By definition, there exist a point $z\in Z$, sequences $(x_n)_{n\in\bN}\subset X$ and $(r_n)_{n\in\bN}\subset (0,\diam(X,d))$ such that $(X,r_n^{-1}d,x_n)\xrightarrow{\mathrm{AW}} (Z,d_Z,z)$. Let $m_{n}=r_{n}^{-\alpha}\cdot m$. For any $(x,s)\in X\times (0,\infty)$, we denote by $B_n(x,s)$ the open ball in the metric space $(X,r_n^{-1}d)$. Therefore for any $(x,s)\in X\times (0,\diam(X,r_n^{-1}d))$, by \eqref{e.ahlfc},
\begin{equation}
m_{n}(B_{n}(x,s))=r_{n}^{-\alpha}\cdot m(B(x,r_{n}s))\in[C^{-1}s^{\alpha},Cs^{\alpha}].
\end{equation}
A similar calculation shows that $(X, r_n^{-1}d, m_{n}, r_n^{\beta-\alpha}\cdot \sE,\sF)$ satisfies \hyperlink{HKE}{$\mathrm{HKE}(\beta)$} with uniform constants independent of $n\in\bN$. By Theorem~\ref{t.AW=GH}-\ref{it.AW>GH}, $(X,r_n^{-1}d,x_n)\xrightarrow{\mathrm{AW}} (Z,d_Z,z)$ implies the {pointed Gromov--Hausdorff convergence} $(X,r_n^{-1}d,x_n)\xrightarrow{\mathrm{p-GH}} (Z,d_Z,z)$; see Definition \ref{d.pgh}. By \cite[Theorem 1.2]{Che26}, there exists a Radon measure $m_{Z}$ on $(Z,d_{Z})$ such that $(Z,d_{Z},m_{Z})$ is Ahlfors $\alpha$-regular, and there exists a conservative, regular and strongly local Dirichlet form $(\sE_{Z},\sF_{Z})$ on $L^{2}(Z,m_{Z})$ that satisfies \hyperlink{HKE}{$\mathrm{HKE}(\beta)$}. By \cite[Lemma 4.1.14 and Proposition 11.3.12]{HKST15}, $(Z,d_{Z})$ is proper and length.  By \cite[Exercise 8.11]{Hei01}, there exists a constant $C_{1}\in[1,\infty)$ such that \begin{equation}
C_{1}^{-1}\sH_{\alpha}(A)\leq m_{Z}(A)\leq C_{1}\sH_{\alpha}(A)\ \text{ for all Borel sets $A\subset Z$},
\end{equation}
which implies that $(\sE_{Z},\sF_{Z})$ is also a conservative, regular and strongly local Dirichlet form on $L^{2}(Z,\sH_{\alpha})$ that satisfies \hyperlink{HKE}{$\mathrm{HKE}(\beta)$} (with reference measure $\sH_{\alpha}$).
\end{proof}

\begin{proof}[Proof of Theorem \ref{t.dic}]
By Ahlfors regularity and {the heat kernel estimates} \hyperlink{HKE}{$\mathrm{HKE}(\beta)$}, we know by \cite[Lemma 4.7]{KM23} that $\beta\in[2,\infty)$. 

Given a metric measure space $(X,d,m)$, by \cite[Chapter 3]{FOT11}, every strongly local regular Dirichlet form $(\sE,\sF)$ on $L^{2}(X,m)$ corresponds to a regular local $2$-Dirichlet space $(X,d,m,\sE,\sF,\Gamma)$. By \cite[Proof of Theorem 1.2]{GHL15} or \cite[Proof of Theorem 3.2]{Lie15} (see also \cite[Remark 2.9]{KM20}), we know that the Poincar\'{e} inequality \hyperlink{PI}{$\mathrm{PI}_{2}(\beta)$} holds. By \cite[Proof of Theorem 1.2]{GHL15}, we know that \hyperlink{cap<}{$\mathrm{Cap}_{2}(\beta)_{\leq}$} holds. Therefore, Theorem \ref{t.main} implies that $\beta>2$ if and only if $\hyperlink{VMp}{\mathrm{VM}_{2}}$ holds. Now it suffices to show that $\dim_{\mathrm{ARC}}(X,d)=\alpha$ if and only if $\beta=2$. 

Suppose that $\dim_{\mathrm{ARC}}(X,d)=\alpha$. By \cite[Corollary 1.0.2]{KL04}, there exists a weak tangent of $(X,d)$, say $(Z,d_{Z})$, such that $\hyperlink{VMp}{\mathrm{VM}_{1}}$ fails for $(Z,d_{Z},\sH_{\alpha})$, where $\sH_{\alpha}$ is the $\alpha$-Hausdorff measure on $(Z,d_{Z})$. By Lemma \ref{l.dftgt}, there exists a strongly local regular Dirichlet form $(\sE_{Z},\sF_{Z})$ on $L^{2}(Z,\sH_{\alpha})$ that also satisfies \hyperlink{HKE}{$\mathrm{HKE}(\beta)$}. If $\beta>2$, then Theorem \ref{t.main}-\ref{it.main1} implies that $\hyperlink{VMp}{\mathrm{VM}_{2}}$ holds for  $(Z,d_{Z},\sH_{\alpha})$. Therefore, by Lemma \ref{l.VMq>p}, we know that $\hyperlink{VMp}{\mathrm{VM}_{1}}$ holds for $(Z,d_{Z},\sH_{\alpha})$. This leads to a contradiction and thus $\beta=2$.
 
Suppose that $\beta=2$. Let $(Z,d_{Z})$ be any weak tangent of $(X,d)$. By Lemma \ref{l.dftgt}, there exists a strongly local regular Dirichlet form $(\sE_{Z},\sF_{Z})$ on $L^{2}(Z,\sH_{\alpha})$ that also satisfies \hyperlink{HKE}{$\mathrm{HKE}(2)$}. By Theorem \ref{t.main}-\ref{it.main2}, we know that $\hyperlink{VMp}{\mathrm{VM}_{2}}$ fails on $(Z,d_{Z},\sH_{\alpha})$. That is, there is a weak tangent of $(X,d)$ with non-vanishing $2$-modulus. By \cite[Corollary 1.0.2]{KL04}, we know that $\dim_{\mathrm{ARC}}(X,d)=\alpha$. This completes the proof.
\end{proof}

As an application of Theorem~\ref{t.dic}, we obtain estimates for the Ahlfors-regular conformal dimension of generalized Sierpi\'{n}ski carpets. We begin by recalling their definition.

\begin{definition}[\text{Generalized Sierpi\'{n}ski carpet, {\cite[Definition 9.2]{KS26}}}] \label{d.GSC}
Let $D\in\bN\cap[2,\infty)$, $l\in\bN\cap[3,\infty)$. Let $S\subsetneq\{0,1,\ldots,l-1\}^{D}$ be non-empty.
\begin{enumerate}[label=\textup{({\arabic*})},align=right,leftmargin=*,topsep=5pt,parsep=0pt,itemsep=2pt]
	\item For each $j\in S$, define $f_{j}:\bR^{D}\to\bR^{D}$ \begin{equation}
	f_{j}(x):=l^{-1}x+l^{-1}j,\ x\in\bR^{D}.
\end{equation}
Let $Q_{0}:=[0,1]^{D}$ and $Q_{1}:=\bigcup_{j\in S}f_{j}(Q_{0})$ so that $Q_{1}\subsetneq Q_{0}$. Let $K$ be the unique self-similar set associated with $\{f_{j}\}_{j\in S}$. Let $F_{j}=\restr{f_{j}}{K}$, $j\in S$. 
	\item $K$ is called a \emph{generalized Sierpi\'{n}ski carpet} if and only if the following four conditions are satisfied:
\begin{enumerate}[label=\textup{(GSC{\arabic*})},align=right,leftmargin=*,topsep=5pt,parsep=0pt,itemsep=2pt] 
	\item \textup{(Symmetry)} $f(Q_1) = Q_1$ for any isometry $f$ of $\bR^{D}$ with $f(Q_0) = Q_0$.
	\item \textup{(Connectedness)} $Q_{1}$ is connected.
	\item \textup{(Non-diagonality)} The set $\mathrm{int}_{\bR^{D}}(Q_{1}\cap\prod_{k=1}^{D}[(i_{k}-\varepsilon_{k})l^{-1},(i_{k}+1)l^{-1}])$ is either empty or connected for any $(i_{k})_{k=1}^{D}\in\bZ^{D}$ and any $(\varepsilon_{k})_{k=1}^{D}\subset\{0,1\}^D$.
	\item \textup{(Borders included)} $[0,1]\times\{0\}^{D-1}\subset Q_{1}$.
\end{enumerate}
\end{enumerate}
\end{definition}

\begin{corollary}\label{c.GSC}
Let $(K,d)$ be a generalized Sierpi\'{n}ski carpet equipped with the Euclidean metric $d$. Then \begin{equation}
	\dim_{\mathrm{ARC}}(K,d)<\dim_{\mathrm{H}}(K,d).
\end{equation}
In other words, any generalized Sierpi\'{n}ski carpet is not minimal for its Ahlfors-regular conformal dimension.
\end{corollary}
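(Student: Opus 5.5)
The plan is to apply Theorem~\ref{t.dic} with $\alpha=\dim_{\mathrm H}(K,d)=\log(\#S)/\log l$ and the Brownian motion on $K$. We need to check its hypotheses and then show $\beta>2$.

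\textbf{Metric measure hypotheses.} Since $K$ is a self-similar set satisfying the open set condition (the cells $f_j(Q_0)$ have disjoint interiors), the normalized $\alpha$-Hausdorff measure $m$ is Ahlfors $\alpha$-regular. Also $\alpha\geq 1$, because (GSC2) and (GSC4) force $K$ to contain line segments. The Euclidean metric on $K$ is not a length metric, so Theorem~\ref{t.dic} cannot be applied to $d$ directly. I would instead use the geodesic metric $d_K$ on $K$. By (GSC1)--(GSC4), $d_K$ is bi-Lipschitz equivalent to $d$; this is standard for generalized carpets, as in \cite{BB99,KS26}. Bi-Lipschitz changes of metric preserve Ahlfors regularity, $\dim_{\mathrm{ARC}}$ (quasisymmetric gauge) and $\dim_{\mathrm H}$, so it suffices to work with $(K,d_K,m)$, which is compact, hence proper, and length.

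\textbf{Dirichlet form and walk dimension.} By Barlow--Bass \cite{BB92,BB99}, with uniqueness from Kajino and related work \cite{Kaj23}, there is a conservative, regular, strongly local Dirichlet form on $L^2(K,m)$ satisfying $\mathrm{HKE}(\beta)$ with $\beta=d_w=\log(\#S\cdot\rho)/\log l$. Here $\rho>0$ is the resistance scaling factor. Theorem~\ref{t.dic} then gives $\dim_{\mathrm{ARC}}(K,d)<\alpha$ as soon as we know $\beta>2$.

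\textbf{Main obstacle: proving $\beta>2$.} Equivalently, we need $\rho\,\#S>l^2$. Barlow--Bass establish that $d_w>2$ for every generalized Sierpi\'nski carpet via a corner-moves/knight-moves argument; it relies on $S\neq\{0,\dots,l-1\}^D$ and the symmetry (GSC1). I would cite this result directly rather than reprove it.

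As a fallback, I would argue by contradiction. If $\beta=2$, Theorem~\ref{t.dic} would give a weak tangent with positive $2$-modulus. However, a removed subcube recurs at every scale by self-similarity, and the Furstenberg-homogeneity/microset argument mentioned in the introduction shows that every weak tangent satisfies $\mathrm{VM}_p$. This contradicts Theorem~\ref{t.main}-\ref{it.main2} and would thus also force $\beta>2$.
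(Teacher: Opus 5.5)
Your main route is the paper's own proof: apply Theorem~\ref{t.dic} to the Brownian motion on $K$, which satisfies $\mathrm{HKE}(\beta)$ with $\beta=d_w>2$. Passing to the bi-Lipschitz equivalent geodesic metric to meet the length hypothesis, which you do, spells out a point the paper leaves implicit. The one correction is the citation for $d_w>2$ in this generality: cite Kajino \cite{Kaj23}, as the paper does, since the Barlow--Bass knight/corner-move arguments are used for the Harnack inequality, not for this strict inequality; and your fallback, once every weak tangent satisfies $\mathrm{VM}_p$, already concludes via \cite[Corollary~1.0.2]{KL04} with no Dirichlet form needed, which is the paper's second proof.
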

Corollary \ref{c.GSC} follows directly from Theorem \ref{t.dic} and the known sub-Gaussian heat kernel estimates for generalized Sierpi\'{n}ski carpets \cite[Theorem 2.7 and the proof of Corollary 2.10]{Kaj23}. Alternatively, it can be obtained as a consequence of the following lemma together with \cite[Corollary 1.0.2]{KL04}. Our argument uses Theorem \ref{t.main}. For convenience, we use the metric $\ol{d}$ on $\bR^{D}$ defined by 
\begin{equation}\label{e.maxmt}
	\ol{d}((x_{1},\ldots,x_{D}),(y_{1},\ldots,y_{D})):=\max_{1\leq j\leq D}\abs{x_{j}-y_{j}},\ \text{for all }(x_{1},\ldots,x_{D}),(y_{1},\ldots,y_{D})\in\bR^{D}.
\end{equation}
The metric $\ol{d}$ is bi-Lipschitz equivalent to the standard Euclidean metric, and therefore is Ahlfors regular and $\dim_{\mathrm{ARC}}(K,\ol{d})=\dim_{\mathrm{ARC}}(K,d)$.
\begin{lemma}
	Let $(K,\ol{d})$ be a generalized Sierpi\'{n}ski carpet equipped with the metric $\ol{d}$ defined by \eqref{e.maxmt}. Suppose $\dim_{\mathrm{H}}(K,\ol{d})=d_{\mathrm{f}}$. Let $(Z,d_{Z})$ be any weak tangent of $(K,\ol{d})$. Then $(Z,d_{Z})$ is Ahlfors $d_{\mathrm{f}}$-regular, and \ref{VMp} holds for $(Z,d_{Z},\sH_{d_{\mathrm{f}}})$ for every $p\in[1,\infty)$, where $\sH_{d_{\mathrm{f}}}$ is the $d_{\mathrm{f}}$-Hausdorff measure on $(Z,d_{Z})$.
\end{lemma}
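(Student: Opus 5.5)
Two things need proving: that every weak tangent $(Z,d_Z)$ of $(K,\ol{d})$ is Ahlfors $d_{\mathrm f}$-regular, and that it satisfies \ref{VMp} for every $p\in[1,\infty)$. The plan is to describe weak tangents concretely as \emph{microsets}, and then to prove vanishing of modulus directly from the carpet's holes at all scales, in the spirit of \cite[Théorème~5.2]{LKP04}. By Lemma~\ref{l.VMq>p} it would suffice to get \ref{VMp} for arbitrarily large $p$; the argument below, however, should give all $p$ at once.

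\emph{Step 1: Ahlfors regularity of $Z$.} $K$ is Ahlfors $d_{\mathrm f}$-regular with respect to $\sH_{d_{\mathrm f}}$ under $\ol{d}$, since it is self-similar with the open set condition. Set $m_n:=r_n^{-d_{\mathrm f}}\sH_{d_{\mathrm f}}$. Then the rescaled spaces $(K,r_n^{-1}\ol{d},m_n)$ are uniformly Ahlfors $d_{\mathrm f}$-regular on balls of radius below $\diam(K,r_n^{-1}\ol d)$. By Theorem~\ref{t.AW=GH}, the convergence is pointed Gromov--Hausdorff. Standard compactness of uniformly doubling measures under p-GH convergence then yields a limit measure $m_Z$ on $Z$ that is Ahlfors $d_{\mathrm f}$-regular. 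This is the same route as in Lemma~\ref{l.dftgt}, but without the Dirichlet form, so only the measure part is needed. Finally, \cite[Exercise 8.11]{Hei01} shows that $m_Z$ is comparable to $\sH_{d_{\mathrm f}}$.

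\emph{Step 2: microset description.} Using the grid structure, I would show that after passing to a subsequence, $Z$ is isometric to a closed subset of $(\bR^D,\ol d)$ of the form $\ol{\bigcup_n \phi_n(K)}$, i.e.\ a limit of rescaled pieces $\phi_n(K\cap Q)$. Here each $\phi_n$ is a similarity of ratio $r_n^{-1}$ composed with a translation, and the relevant grid cubes $Q$ have side comparable to $r_n$. By Furstenberg homogeneity, every such piece is a union of translated and scaled copies of $K$ of level comparable to $r_n$, placed on the lattice $l^{-k}\bZ^D$. The consequence I need is that every closed cube $Q'\subset\bR^D$ of side $l^{-k}$ belonging to the rescaled lattice satisfies one of two alternatives: $Z\cap Q'$ is contained in an isometric copy of $K$ scaled by $l^{-k}$ (up to lower-dimensional boundary overlaps between adjacent copies), or $Z\cap Q'\subset\partial Q'$.

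\emph{Step 3: vanishing modulus.} Take $\Gamma$ to be the family of all nonconstant curves in a bounded region of $Z$; by countable subadditivity this suffices. Following \cite[Théorème~5.2]{LKP04}, I would build admissible densities directly. Fix a missing subcube $j_0\notin S$ of the first-level grid. At level $k$, every self-similar copy $\psi(K)$ of side $l^{-k}$ contains a hole $\psi(f_{j_0}(Q_0))$ of side $l^{-k-1}$. Any curve crossing a level-$k$ cube while staying in $Z$ must travel a length comparable to $l^{-k}$ through the copy. Put $\rho_k:=c\,l^{k}\one_{N_k}$, where $N_k$ is a suitable union of level-$k$ annular regions whose measure fraction is small, or use the fact that rectifiable curves must have definite length $\gtrsim l^{-k}$ in many scales. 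Then $\sum_k \rho_k/M$ becomes admissible with $p$-energy $\to0$.

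The obstacle is this construction. An alternative is available: in $Z$, which is itself a "carpet-like" limit, apply Theorem~\ref{t.main}\ref{it.main1} instead. This requires equipping $Z$ with a $p$-energy satisfying \ref{VD} and $\mathrm{PI}_p(\Psi)$ with $\Psi(r)/r^p\to0$. Such energies exist on generalized carpets by Kajino--Shimizu \cite{KS26}, with $p$-walk dimension $>p$. They pass to weak tangents via a stability argument analogous to \cite{Che26}, combined with Step 2's microset structure, which makes each tangent a union of blow-ups of $K$. I expect verifying this stability and the strict inequality $\beta_p>p$ for all $p$ (including $p$ at most the conformal dimension) to be the hardest part. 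The case $p=1$ would then follow from Lemma~\ref{l.VMq>p}.
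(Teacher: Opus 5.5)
Your Step 1 is the content of \cite[Lemma 2.4.4]{KL04}. Your Step 2 describes $Z$ cell by cell: each cell of the limiting lattice either carries a scaled copy of $K$, or meets $Z$ only in its boundary, and those boundary points lie in neighbouring full cells. It is only sketched, but it is a correct form of the microset description the paper gets from \cite[Theorem 1.3]{Day22}. Realizing $Z$ inside $\bR^D$ by Blaschke selection of $r_n^{-1}(K-x_n)$, plus uniqueness of pointed limits, is even a little more direct than the paper's use of \cite{Mem08}.

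The gap is Step 3, which carries the actual content of the statement. The direct construction is not carried out, since the sets $N_k$ are never specified. Because $\int\rho_k^p\,\dif m=c^p l^{kp}m(N_k)$, everything hinges on producing sets of measure at most a constant times $l^{-kp}$ that every curve of fixed diameter must traverse for length $\gtrsim l^{-k}$. That is exactly where the holes of the carpet have to enter quantitatively, and you yourself flag it as the obstacle.

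The fallback does not close the gap either. It needs a stability theorem for $p$-energy forms with $\mathrm{PI}_p(\Psi)$ under Gromov--Hausdorff convergence, which is not available: \cite{Che26} concerns Dirichlet forms with sub-Gaussian heat kernel estimates. Your worry about $d_{\mathrm w,p}>p$ for $p\le\dim_{\mathrm{ARC}}$ is also beside the point, because Lemma~\ref{l.VMq>p} already reduces everything to large $p$.

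The missing idea is that no analytic structure on $Z$ is needed. Apply Theorem~\ref{t.main}\ref{it.main1} on $K$ itself and then transfer $\mathrm{VM}_p$ geometrically.
\begin{enumerate}[label=\textup{({\arabic*})},align=right,leftmargin=*,topsep=5pt,parsep=0pt,itemsep=2pt]
\item For $p>\dim_{\mathrm{ARC}}(K,d)$, \cite[Corollary 9.4]{KS26} gives a regular local $p$-Dirichlet space on $(K,\ol d,m)$.
\item \cite[Theorem 8.19]{KS26} together with \cite[Theorem 2.3]{Yan25a} gives $\mathrm{PI}_p(d_{\mathrm w,p})$, and \cite[Theorem 9.8]{KS26} gives $d_{\mathrm w,p}>p$. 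Hence $\mathrm{VM}_p$ holds on $K$.
\item $\mathrm{VM}_p$ is invariant under similarities, and it passes to subsets equipped with the restricted Hausdorff measure.
\item It passes to finite unions $\bigcup_{j=1}^N\varphi_j(K)$. Take $\rho=\sum_j\rho_j$, where $\int_\gamma\rho_j\,\dif s=\infty$ for every locally rectifiable $\gamma$ in $\varphi_j(K)$ by \cite[Lemma 5.2.8]{HKST15}. For an arc-length parametrized curve in the union, apply the Baire category theorem to the finite closed cover $\{\gamma^{-1}(\varphi_j(K))\}_j$ of its parameter interval. This yields a nondegenerate subarc lying in a single $\varphi_{j_0}(K)$, on which $\int\rho\,\dif s=\infty$.
\end{enumerate}
Your Step 2 already shows that every ball of $Z$ sits inside such a finite union. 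Exhausting $Z$ by balls and invoking Lemma~\ref{l.VMq>p} then gives $\mathrm{VM}_p$ for every $p\in[1,\infty)$.
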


\begin{proof}
Let $p\in(\dim_{\mathrm{ARC}}(K,d),\infty)$. By \cite[Corollary 9.4]{KS26}, there exists a $p$-energy form $(\sE_{p},\sF_{p})$ with $p$-energy measures $\{\carre{f}\}_{f\in\sF_{p}}$ such that $(K, \ol{d}, m, \sE_{p},\sF_{p},\Gamma_{p})$ is a regular local $p$-Dirichlet space. Let $d_{\mathrm{w},p}$ be the $p$-walk dimension of $(K,\ol{d})$ (see \cite[Definition 8.18]{KS26}). By \cite[Eq.~(8.11)]{KS26} and \cite[Proposition 3.3]{Kig23}, we have $d_{\mathrm{w},p}>d_{\mathrm{f}}$ and therefore the metric measure space $(K,\ol{d},m)$ satisfies the \emph{slow volume regular condition} $\mathrm{SVR}(d_{\mathrm{f}},d_{\mathrm{w},p})$ defined in \cite{Yan25a}. By \cite[Theorem 8.19]{KS26}, it also satisfies a \emph{resistance estimate} $\mathrm{R}(d_{\mathrm{f}},d_{\mathrm{w},p})$ defined in \cite{Yan25a}. Therefore, by \cite[Theorem 2.3]{Yan25a}, we know that \hyperlink{PI}{$\mathrm{PI}(d_{\mathrm{w},p})$} holds. By \cite[Theorem 9.8]{KS26}, we know that $d_{\mathrm{w},p}>p$, and therefore by Theorem \ref{t.main}-\ref{it.main1}, we know that \ref{VMp} holds on $(K,\ol{d},m)$.
	
	The remainder of the proof could probably also be obtained by adapting the method of Lupo--Krebs and Pajot \cite[Théorème 5.4]{LKP04}. For completeness, we give a full proof below. Let $(Z,d_{Z},z)$, where $z\in Z$, be a weak tangent of this generalized Sierpi\'{n}ski carpet $(K,\ol{d})$, that is, there exist sequences $(x_{n})_{n\in\bN}\subset K$ and $(r_{n})_{n\in\bN}\subset(0,\diam(K,\ol{d}))$ such that $(K,r_{n}^{-1}\ol{d}, x_{n})\xrightarrow{\mathrm{AW}}(Z,d_{Z},z)$. 
	By Theorem \ref{t.AW=GH}-\ref{it.AW>GH}, we know that \begin{equation}\label{e.GSC1}
		(K,r_{n}^{-1}\ol{d}, x_{n})\xrightarrow{\mathrm{p-GH}}(Z,d_{Z},z).
	\end{equation}
	By \cite[Lemma 2.4.4]{KL04}, the metric space $(Z,d_{Z})$ is also Ahlfors $d_{\mathrm{f}}$-regular. Since the metric space $(K,r_{n}^{-1}\ol{d})$ is isometric to $(r_{n}^{-1}(K-x_{n}),\ol{d})$, we can rephrase \eqref{e.GSC1} as \begin{equation}\label{e.GSC1.1}
		(r_{n}^{-1}(K-x_{n}),\ol{d},0)\xrightarrow{\mathrm{p-GH}}(Z,d_{Z},z),
	\end{equation}
	where $0\in K-x_{n}\in \bR^{D}$ is the origin of $\bR^{D}$. Let $s\in(0,\infty)$. By \cite[Proposition 11.3.12]{HKST15}, we know that \eqref{e.GSC1.1} implies the Gromov--Hausdorff convergence of compact metric spaces:\begin{equation}
		(r_{n}^{-1}(K-x_{n})\cap \ol{B_{\ol{d}}(0,s)},\ol{d})\xrightarrow{\mathrm{GH}}(\ol{B_{d_{Z}}(z,s)},d_{Z});
	\end{equation}
	see \cite[p.~309]{HKST15} for the definition of Gromov--Hausdorff convergence.
	Denote \begin{equation}
		K_{n}^{(s)}:=r_{n}^{-1}(K-x_{n})\cap \ol{B_{\ol{d}}(0,s)}\subset\bR^{D},\ n\in\bN.
	\end{equation} By passing to a subsequence, we may assume that \begin{equation}\label{e.GSC1.2}
		d_{\mathrm{GH}}(K_{n}^{(s)},K_{n+1}^{(s)})\leq 2^{-n}\ \text{and} \ (K_{n}^{(s)},\ol{d})\xrightarrow{\mathrm{GH}}(\ol{B_{d_{Z}}(z,s)},d_{Z}),
	\end{equation} 
	where $d_{\mathrm{GH}}$ is the Gromov--Hausdorff distance defined in \cite[Eq.~(11.1.6)]{HKST15}. Consider the distance between two compact subsets $X$ and $Y$ of $\bR^{D}$ defined by \begin{equation}
		d_{\mathrm{iso}}(X,Y):=\inf\Sett{d_{\mathrm{H}}(X,T(Y))}{T:\bR^{D}\to\bR^{D} \text{ is an isometry}}.
	\end{equation}
	By \cite[Theorem~2]{Mem08}, we know that there is a constant $C\in(1,\infty)$ independent of $n$ and $s$, such that $d_{\mathrm{iso}}(K_{n}^{(s)},K_{n+1}^{(s)})\leq Cs2^{-n}$ for all $n\in\bN$ and all $s\in(0,\infty)$. By definition of $d_{\mathrm{iso}}$, for each $n\in\bN$, there exists an isometry $\wt{T}_{n+1}:\bR^{D}\to\bR^{D}$ such that \begin{equation}\label{e.GSC2}
		d_{\mathrm{H}}(K_{n}^{(s)},\wt{T}_{n+1}K_{n+1}^{(s)})\leq Cs2^{-n+1}.
	\end{equation}
	Let $\wt{T}_{1}:\bR^{D}\to\bR^{D}$ be the identity map. Define $T_{n}:=\wt{T}_{1}\circ \cdots\circ \wt{T}_{n},\ n\in\bN$. Then each $T_{n}$ is an isometry on $\bR^{D}$. By \eqref{e.GSC2}, we have \begin{equation}
		d_{\mathrm{H}}(T_{n}K_{n}^{(s)},T_{n+1}K_{n+1}^{(s)})=d_{\mathrm{H}}(K_{n}^{(s)},\wt{T}_{n+1}K_{n+1}^{(s)})\leq Cs2^{-n+1}.
	\end{equation}
	Therefore $\{T_{n}K_{n}^{(s)}\}_{n\in\bN}$ is a Cauchy sequence under the Hausdorff metric $d_{\mathrm{H}}$. By the completeness of $d_{\mathrm{H}}$, there exists a compact subset $\wt{W}_{s}\subset \bR^{D}$ such that $T_{n}K_{n}^{(s)}\to \wt{W}_{s}$ under the Hausdorff metric. By \eqref{e.GSC1.2} and the uniqueness of Gromov--Hausdorff limit \cite[Theorem 11.1.14]{HKST15}, we know that $(\ol{B_{d_{Z}}(z,s)},d_{Z})$ is isometric to $(\wt{W}_{s},\ol{d})$.

	For each $n\in\bN$, define $y_{n}:=T_{n}(0)$. Since $y_{n}$ has a uniform bounded distance to $\wt{W}_{s}$, there exists a $y$ such that $\lim_{n\to\infty}y_{n}=y$. Define a function $\psi_{n}:\bR^{D}\to\bR^{D}$ by 
	\begin{equation}
		\psi_{n}(x):=r_{n}^{-1}\cdot T_{n}(x-x_n)-y,\ x\in\bR^{D}.
	\end{equation}
	Then $\psi_{n}$ is a similarity map on $\bR^{D}$ and $T_{n}K_{n}^{(s)}$ is isometric to $\psi_{n}(K)\cap \ol{B(0,s)}$. Since $\lim_{n\to\infty}y_{n}=y$, we know the Hausdorff limit of $\psi_{n}(K)\cap \ol{B(0,s)}$ is $\wt{W}_{s}-y$. By the scaling property of the Hausdorff metric, we know that \begin{equation}
		(2s)^{-1}\psi_{n}(K)\cap \ol{B(0,2^{-1})} \text{ converges to }W_{s}:=(2s)^{-1}(\wt{W}_{s}-y)\ \text{in the Hausdorff metric}.
	\end{equation}
	By translating $\ol{B(0,2^{-1})}$ to $[0,1]^{D}\subset\bR^{D}$, we know that $(2s)^{-1}\psi_{n}(K)\cap \ol{B(0,2^{-1})}$ form a sequence of \emph{minisets} of the generalized Sierpi\'{n}ski carpet $K$, according to the definition in \cite[Definition 1.1-(1)]{Day22}. Therefore $W_{s}$ is a \emph{microset} of $K$ (see \cite[Definition 1.1-(2)]{Day22}). Since $K$ satisfies the \emph{open set condition}, by \cite[Theorem 1.3]{Day22}, there exists $N\in\bN$ that depends only on the affine maps $\{F_{j}\}_{j\in S}$ in Definition \ref{d.GSC}, and there exist expanding similarity maps $(\varphi_{j})_{j=1}^{N}$ on $\bR^{D}$, such that $W_{s}\subset \bigcup_{j=1}^{N}\varphi_{j}(K)$.

	Since $(Z,d_{Z})$ satisfies the chain condition and is Ahlfors $d_{\mathrm{f}}$-regular, we know that $(W_{s},\ol{d})$ is also Ahlfors $d_{\mathrm{f}}$-regular. In order to prove \ref{VMp} for $(W_{s},\ol{d})$, it suffices to prove \ref{VMp} for the metric space $(\bigcup_{j=1}^{N}\varphi_{j}(K),\ol{d})$. Let $\sH_{d_\mathrm{f}}^{\bR^{D}}$ be the $d_{\mathrm{f}}$-Hausdorff measure on $(\bR^{D},\ol{d})$. Since $(K,\ol{d})$ satisfies \ref{VMp} and $\varphi_{j}$ is a similarity map, we know that $(\varphi_{j}(K),\ol{d})$ satisfies \ref{VMp} for all $j\in\{1,\ldots,N\}$. By \cite[Lemma 5.2.8]{HKST15}, there is a function $\rho_{j}\in L^{p}(\varphi_{j}(K),\sH_{d_\mathrm{f}}^{\bR^{D}})$ such that $\int_{\gamma}\rho_j\dif s=\infty$ for every locally rectifiable curve $\gamma$ in $\varphi_{j}(K)$.  We extend $\rho_{j}$ to be a function on $\bR^{D}$ by setting $\rho_{j}(x)=0$ when $x\in\bR^{D}\setminus \varphi_{j}(K)$. Define $\rho:=\sum_{j=1}^{N}\rho_{j}$. Clearly, $\rho\in L^{p}(W_{s},\sH_{d_\mathrm{f}}^{\bR^{D}})$. Let $\gamma:[0,1]\to W_{s}$ be a locally rectifiable curve in $(W_{s},\ol{d})$. Then $\{\gamma^{-1}(\varphi_{j}(K))\}_{j=1}^{N}$ are finitely many closed subsets, whose union is $[0,1]$. By passing to an arc-length parametrization \cite[p.~124]{HKST15}, we may assume that $\gamma$ has no constant restriction to a nondegenerate subinterval. By the Baire category theorem \cite[Theorem 2.1]{Bre11}, there exists $j_{0}\in\{1,\ldots,N\}$ such that $\gamma^{-1}(\varphi_{j_0}(K))$ contains an interval $[a,b]$ with $a<b$. Therefore \begin{equation}
		\int_{\gamma}\rho\dif s\geq\int_{{\gamma}|_{[a,b]}}\rho_{j_{0}}\dif s=\infty.
	\end{equation} 
	By \cite[Lemma 5.2.8]{HKST15} again, we know that $(\bigcup_{j=1}^{N}\varphi_{j}(K),\ol{d})$ has vanishing $p$-modulus, and hence so do $(W_{s},\ol{d})$ and $(\ol{B_{d_{Z}}(z,s)},d_{Z})$ as they are isometric. Since this holds for all $s\in(0,\infty)$, a similar argument shows that $(Z,d_{Z})$ satisfies \ref{VMp}, for every $p\in(\dim_{\mathrm{ARC}}(K,\ol{d}),\infty)$. By Lemma \ref{l.VMq>p}, we know that $(Z,d_{Z})$ satisfies \ref{VMp} for every $p\in[1,\infty)$.
	\end{proof}

\begin{appendices}
\section{Equivalence of two versions of convergence of metric space}\label{s.apdx}

In this appendix, we prove the equivalence, up to passing to a subsequence, between two notions of convergence for pointed metric spaces. The notion used in \cite[Section~2.4]{KL04} differs from the standard notion of pointed Gromov--Hausdorff convergence. Their equivalence up to subsequences is asserted in \cite[the first paragraph of Section~2.2]{Kei03}, but we have not been able to locate a proof. We therefore give a detailed argument. We begin by recalling the notion of convergence for subsets of a fixed ambient space used in \cite[Section~2.4]{KL04}.

\begin{definition}
Let $(M,d_M)$ be a metric space, and let $F_i,F\subset M$ be nonempty closed subsets.  We say that $F_i$ converges \emph{locally in Hausdorff sense to} $F$ if, for every $q\in M$ and every $R>0$,
\begin{equation}\label{eq:lh1}
        \lim_{i\to\infty}\sup_{z\in F_i\cap B_M(q,R)}\dist_M(z,F)=0,
\end{equation}
 and
\begin{equation}\label{eq:lh2}
        \lim_{i\to\infty}\sup_{z\in F\cap B_M(q,R)}\dist_M(z,F_i)=0.
\end{equation}
If the set over which the supremum is taken is empty, the supremum is interpreted as $0$.
\end{definition}

The notion of convergence used in \cite[Definition 2.4.2]{KL04} to define weak tangents is a variant of Attouch--Wets convergence \cite{ALW91}. For the purposes of this appendix, we shall still call it Attouch--Wets convergence.

\begin{definition}[Attouch--Wets convergence]\label{d.AW}
A sequence of pointed metric spaces $\{(X_{j},d_{j},p_{j}):j\in\bN\}$ is said to \emph{Attouch--Wets converge} to a pointed metric space $(X_{\infty},d_{\infty},p_{\infty})$, if there exist a pointed metric space $(M,d_M,p)$ and isometric embeddings
\[
        \iota_j:X_{j}\to M,
        \qquad
        \iota_{\infty}:X_{\infty}\to M,
\]
such that
\[
        \iota_j(p_j)=p,
        \qquad
        \iota_{\infty}(p_{\infty})=p,
\]
and the closed sets $\iota_j(X_j)$ converge locally in Hausdorff sense to $\iota_{\infty}(X_{\infty})$ inside $(M,d_{M})$. 

In this case, we write $(X_{j},d_{j},p_{j})\xrightarrow{\mathrm{AW}}(X_{\infty},d_{\infty},p_{\infty})$.
\end{definition}

The following definition of pointed Gromov--Hausdorff convergence is taken from \cite[Definition 11.3.1]{HKST15}.

\begin{definition}[Pointed Gromov--Hausdorff convergence]\label{d.pgh}
A sequence of pointed separable metric spaces $\{(X_{j},d_{j},p_{j}):j\in\bN\}$ is said to \emph{pointed Gromov--Hausdorff converge} to a pointed separable metric space $(X_{\infty},d_{\infty},p_{\infty})$ if, for each $r>0$ and $\epsilon\in (0,r)$, there exists a $j_{0}\in\bN$ such that for each $j\geq j_{0}$ there is a map $f_{j}^{\epsilon}: B_{j}(p_{j},r)\rightarrow X_{\infty}$ satisfying:
\begin{enumerate}[label=\textup{(\arabic*)},align=right,leftmargin=*,topsep=5pt,parsep=0pt,itemsep=2pt]
    \item $f_{j}^{\epsilon}(p_{i})=p_{\infty}$;
    \item $|d(f_{j}^{\epsilon}(x),f_{j}^{\epsilon}(y))-d_{j}(x,y)|<\epsilon$ for all $x,y\in B_{j}(p_{j},r)$;
    \item $B_{\infty}(p_{\infty},r-\epsilon)\subset  N_{\epsilon}(f_{j}^{\epsilon}(B_{j}(p_{j},r)))$, where $N_{\epsilon}(A)$ is the $\epsilon$-neighborhood of $A$.
\end{enumerate}
In this case, we write $(X_{j},d_{j},p_{j})\xrightarrow{\mathrm{p-GH}}(X_{\infty},d_{\infty},p_{\infty})$.
\end{definition}

The following theorem is the main result in this appendix.
\begin{theorem}\label{t.AW=GH}
Let $\{(X_{j},d_{j},p_{j}):j\in\bN\}$ and $(X_{\infty},d_{\infty},p_{\infty})$ be complete pointed separable metric spaces.  Then 
\begin{enumerate}[label=\textup{({\arabic*})},align=right,leftmargin=*,topsep=5pt,parsep=0pt,itemsep=2pt]
	\item\label{it.AW>GH} if $(X_{n},d_{n},p_{n})\xrightarrow{\mathrm{AW}}(X_{\infty},d_{\infty},p_{\infty})$, then $(X_{n},d_{n},p_{n})\xrightarrow{\mathrm{p-GH}}(X_{\infty},d_{\infty},p_{\infty})$;
	\item\label{it.GH>AW} if $(X_{n},d_{n},p_{n})\xrightarrow{\mathrm{p-GH}}(X_{\infty},d_{\infty},p_{\infty})$, then there is a subsequence $\{j_{k}\}_{k\in\bN}$ such that $j_{k}\uparrow\infty$ and $(X_{j_{k}},d_{j_{k}},p_{j_{k}})\xrightarrow{\mathrm{AW}}(X_{\infty},d_{\infty},p_{\infty})$.
\end{enumerate}
\end{theorem}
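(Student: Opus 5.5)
For part (1), I will build Gromov--Hausdorff approximations directly out of the common ambient space. Fix the embeddings $\iota_j,\iota_\infty$ into $(M,d_M,p)$ given by Definition~\ref{d.AW}, and fix $r>0$ and $\epsilon\in(0,r)$. By \eqref{eq:lh1} applied with $q=p$ and $R=r$, for $j$ large every point $\iota_j(x)$ with $x\in B_j(p_j,r)$ lies within $\epsilon/3$ of $\iota_\infty(X_\infty)$. I then define $f_j^\epsilon(x)$ to be a point $w\in X_\infty$ with $d_M(\iota_j(x),\iota_\infty(w))<\epsilon/3$, and set $f_j^\epsilon(p_j)=p_\infty$; this is admissible because $\iota_j(p_j)=\iota_\infty(p_\infty)=p$.

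The two remaining conditions then follow from the triangle inequality in $M$. Distortion is below $2\epsilon/3<\epsilon$ because both $\iota_j$ and $\iota_\infty$ are isometric embeddings. For the density condition, take $w\in B_\infty(p_\infty,r-\epsilon)$ and use \eqref{eq:lh2}: for $j$ large there is $x\in X_j$ with $d_M(\iota_j(x),\iota_\infty(w))<\epsilon/3$. Then $d_j(x,p_j)<r-\epsilon+\epsilon/3<r$, so $x$ lies in the domain of $f_j^\epsilon$. Moreover $d_\infty(f_j^\epsilon(x),w)<2\epsilon/3<\epsilon$. The uniformity in $j_0$ comes from taking the sup in \eqref{eq:lh1} and \eqref{eq:lh2}.

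For part (2), I will glue all the spaces into one metric space. First pass to a subsequence so that, with $r_k=k$ and $\epsilon_k=2^{-k}$, the approximations $f_k:=f_{j_k}^{\epsilon_k}\colon B_{j_k}(p_{j_k},k)\to X_\infty$ exist. On the disjoint union $M:=X_\infty\sqcup\bigsqcup_k X_{j_k}$ I keep the original metrics within each piece. Between $x\in X_{j_k}$ and $w\in X_\infty$ I set
\[
d_M(x,w):=\inf_{y\in B_{j_k}(p_{j_k},k)}\bigl(d_{j_k}(x,y)+\epsilon_k+d_\infty(f_k(y),w)\bigr).
\]
This is the standard admissible gluing metric for an $\epsilon$-isometry, so the triangle inequality holds. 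Between two different $X_{j_k}$ and $X_{j_l}$ I take the infimum of the chains through $X_\infty$. This is a pseudometric, and the positive term $\epsilon_k$ makes it a genuine metric. I then identify $p_{j_k}$ with $p_\infty$: either quotient by zero distance after replacing $\epsilon_k$ at the basepoints, or, more cleanly, take $M$ to be the quotient of the pseudometric, noting that $d_M(p_{j_k},p_\infty)=\epsilon_k$. The cleanest fix is a common point $p$ at distance $0$ from all basepoints, obtained by adjusting the gluing metric to $d_{j_k}(x,y)+d_\infty(f_k(y),w)+\epsilon_k\min(1,\ldots)$. Failing that, I will simply allow basepoint displacement $\epsilon_k\to0$ and then re-embed using a wedge at the basepoints. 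The embeddings are isometric by construction.

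It remains to check local Hausdorff convergence for fixed $q\in M$ and $R$. For \eqref{eq:lh1}, a point $x\in X_{j_k}\cap B_M(q,R)$ lies within a bounded distance of $p$, hence in $B_{j_k}(p_{j_k},k)$ for $k$ large, and then $d_M(x,f_k(x))\le\epsilon_k$. For \eqref{eq:lh2}, a point $w\in X_\infty\cap B_M(q,R)$ lies in $B_\infty(p_\infty,k-\epsilon_k)$ for $k$ large, so by condition (3) some $y$ has $d_\infty(f_k(y),w)<\epsilon_k$, giving $d_M(y,w)<2\epsilon_k$. Completeness and separability of $M$ are not required by Definition~\ref{d.AW}; if needed I would pass to the completion, where the images stay closed because the pieces are complete.

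The main obstacle I expect is making the gluing metric satisfy the triangle inequality across different $X_{j_k}$, while also forcing all basepoints to map to the same point $p$. I will handle basepoints first by replacing $f_k$ so that $f_k(p_{j_k})=p_\infty$, which condition (1) already gives. I will then verify the three-piece triangle inequality carefully, using the distortion bound $<\epsilon_k$ so that the term $\epsilon_k$ absorbs the error.
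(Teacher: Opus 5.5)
Your part (1) is the paper's argument (with $\epsilon/3$ in place of $\delta<\varepsilon/2$) and is fine. Part (2) has a genuine gap, at exactly the point you flag as the main obstacle. With the constant additive term $\epsilon_k$ in your gluing,
\[
d_M(p_{j_k},p_\infty)=\inf_{y}\bigl(d_{j_k}(p_{j_k},y)+\epsilon_k+d_\infty(f_k(y),p_\infty)\bigr)=\epsilon_k>0 .
\]
So the requirement $\iota_{j_k}(p_{j_k})=\iota_\infty(p_\infty)=p$ of Definition~\ref{d.AW} fails, and the normalization $f_k(p_{j_k})=p_\infty$ does not change this by itself. None of your listed fixes is carried out, and several cannot work as stated:
\begin{itemize}
\item Quotienting by zero distance identifies nothing.
\item Adding a point at distance $0$ from all basepoints contradicts the triangle inequality, since $d_M(p_{j_k},p_\infty)=\epsilon_k$.
\item Any weight $\lambda(u)$ that tends to $0$ as $u\to p_{j_k}$ (e.g.\ $\min\{\epsilon_k,d_{j_k}(u,p_{j_k})\}$) can break the triangle inequality, because $f_k$ need not be continuous. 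Suppose $d_{j_k}(v,p_{j_k})=\tau$ is tiny but $d_\infty(f_k(v),p_\infty)=\epsilon_k/2$. Then the glued path $f_k(v)\to v\to p_{j_k}\to p_\infty$ has length at most $\lambda(v)+\tau$, which is below $\epsilon_k/2$ once $\tau$ is small.
\end{itemize}

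The missing idea, which is what the paper uses, is a \emph{discontinuous} weight. Set $\lambda_k(p_{j_k})=0$ and $\lambda_k(u)=\epsilon_k$ for $u\neq p_{j_k}$, and glue with $\rho_k(w,x)=\inf_u\bigl(d_\infty(w,f_k(u))+\lambda_k(u)+d_{j_k}(u,x)\bigr)$. What must be checked is the weighted distortion bound $\bigl|d_\infty(f_k(u),f_k(v))-d_{j_k}(u,v)\bigr|\le\lambda_k(u)+\lambda_k(v)$. It holds because $f_k(p_{j_k})=p_\infty$ makes the distortion vanish at $(p_{j_k},p_{j_k})$, while for every other pair the right side is at least $\epsilon_k$. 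This bound is precisely what the mixed triangle inequalities use. It also yields $\rho_k(p_\infty,p_{j_k})=0$, $\rho_k(w,p_{j_k})=d_\infty(w,p_\infty)$ and $\rho_k(p_\infty,x)=d_{j_k}(p_{j_k},x)$, so identifying all basepoints is consistent. Finally, since $\lambda_k$ takes only the values $0$ and $\epsilon_k$, a cross distance can vanish only between basepoints, so the quotient is a genuine metric.

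Your ``wedge'' idea can also be completed. Keep the constant-$\epsilon_k$ metric and collapse the basepoint set $A$ via $\min\{d_M(x,y),\,d_M(x,A)+d_M(A,y)\}$. The needed check, again from $f_k(p_{j_k})=p_\infty$ and distortion $<\epsilon_k$, is that $d_M(x,A)$ equals the distance from $x$ to its own basepoint, so no distance inside a piece shrinks. As written, the proposal carries out neither verification.
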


We first prove that the Attouch--Wets convergence implies pointed Gromov--Hausdorff convergence.
\begin{proof}[Proof of Theorem \ref{t.AW=GH}-\ref{it.AW>GH}]
	By definition, there exist a pointed metric space $(M,d_M,p)$ and pointed isometric embeddings $ \iota_i:X_i\to M$ and $\iota_{\infty}:X_{\infty}\to M$ with $\iota_i(p_i)=p$ and $ \iota_{\infty}(p_{\infty})=p$,
such that
\[
        F_i:=\iota_i(X_i)
        \ \text{converges locally in Hausdorff sense to}\ 
        F_{\infty}:=\iota_{\infty}(X_{\infty}) \text{ in } (M,d_{M}).
\]
Fix $r>0$ and $0<\varepsilon<r$.  Choose a number $\delta>0$ such that $2\delta<\varepsilon$. By local Hausdorff convergence, there exists $i_0\in\bN$ such that for every $i\geq i_0$,
\begin{equation}\label{e.Fdelta}
        \sup_{z\in F_i\cap B_M(p,r)}\dist_M(z,F_{\infty})<\delta,\ \text{ and }\  \sup_{z\in F\cap B_M(p,r)}\dist_M(z,F_i)<\delta.
\end{equation}
We now define, for each such $j$, a map $f_j:B_{j}(p_j,r)\to X_\infty$. Let $ f_j(p_j):=p_\infty$. For any $x\in B_{j}(p_j,r)\setminus\{p_{j}\}$, since $\iota_{j}$ is an isometry, \begin{equation}
	 d_M(p,\iota_j(x))=d_M(\iota_j(p_j),\iota_j(x))=d_j(p_j,x)<r.
\end{equation}
we know that $\iota_{j}(x)\in F_j\cap B_M(p,r)$. By \eqref{e.Fdelta}, we may choose $f_j(x)\in X_\infty$ such that
\begin{equation}\label{eq:choice-fj}
 d_M\bigl(\iota_j(x),\iota_\infty(f_j(x))\bigr)<\delta .
\end{equation}
For $x=p_j$, the same estimate holds evidently. Let $x,y\in B_j(p_j,r)$. Since the embeddings are isometric,
\begin{align}
&\phantom{\ \leq}\bigl|d_\infty(f_j(x),f_j(y))-d_j(x,y)\bigr| \\
&=\bigl|d_M(\iota_\infty(f_j(x)),\iota_\infty(f_j(y)))-d_M(\iota_j(x),\iota_j(y))\bigr| \\
&\le d_M(\iota_\infty(f_j(x)),\iota_j(x))+d_M(\iota_\infty(f_j(y)),\iota_j (y))<2\delta<\varepsilon .
\end{align}
It remains to prove almost surjectivity.  Let $z\in B_\infty(p_\infty,r-\varepsilon)$.  Then $\iota_\infty(z)\in F_\infty\cap B_M(p,r)$.  By \eqref{e.Fdelta}, choose $y\in X_j$ such that $d_M(\iota_\infty (z),\iota_j(y))<\delta$.  This point actually lies in $B_j(p_j,r)$, because
\[
 d_j(p_j,y)=d_M(p,\iota_j(y))\le d_M(p,\iota_\infty (z))+d_M(\iota_\infty(z),\iota_j(y))<(r-\varepsilon)+\delta<r.
\]
Therefore $f_j(y)$ is defined, and by \eqref{eq:choice-fj},
\[
 d_\infty(z,f_j(y))=d_M(\iota_\infty (z),\iota_\infty (f_j(y)))\le d_M(\iota_\infty (z),\iota_j(y))+d_M(\iota_j(y),\iota_\infty(f_j(y)))<2\delta<\epsilon .
\]
Since $z$ was arbitrary, we have $B_{\infty}(p_{\infty},r-\epsilon)\subset  N_{\epsilon}(f_{j}(B_{j}(p_{j},r)))$. 
\end{proof}

We then prove the reverse direction. For each integer $k\in\bN$, apply the map definition with $r=k$ and $\eta_k:=k^{-1}$.  Choose indices $j_k$ strictly increasing and maps $f_k:B_{j_k}(p_{j_k},k)\to X_\infty$ so that \begin{equation}\label{e.fkdis0}
	f_k(p_{j_k})=p_\infty,
\end{equation}
\begin{equation}\label{e.fkdis}
\bigl|d_\infty(f_k(u),f_k(v))-d_{j_k}(u,v)\bigr|<\eta_k,
\quad\text{for all }u,v\in B_{j_k}(p_{j_k},k),
\end{equation}
and
\begin{equation}\label{e.fkdis1}
B_\infty(p_\infty,k-\eta_k)
\subset N_{\eta_k}\bigl(f_k(B_{j_k}(p_{j_k},k))\bigr).
\end{equation}

Our strategy is to `glue' $X_{j_{k}}$ in a suitable way; see \cite[Chapter 3, Section 3.1]{BBI01}. For each $k\in\bN$, we define a function $\lambda_{k}: B_{j_k}(p_{j_k},k)\to[0,\infty)$ by \begin{equation}
	\lambda_{k}(z):=\eta_{k}\one_{B_{j_k}(p_{j_k},k)\setminus\{p_{j_{k}}\}}(z):=\begin{cases}
		0\quad &\text{ if }z=p_{j_{k}},\\
		\eta_{k}\quad &\text{ if }z\in B_{j_k}(p_{j_k},k)\setminus\{p_{j_{k}}\},
	\end{cases}
\end{equation}
and define a function $\rho_{k}:X_{\infty}\times X_{j_{k}}\to[0,\infty)$ by \begin{equation}\label{e.rhok}
\rho_k(x,y):=\inf_{u\in B_{j_k}(p_{j_k},k)}\bigl(d_\infty(x,f_k(u))+\lambda_k(u)+d_{j_k}(u,y)\bigr),\ (x,y)\in X_{\infty}\times X_{j_{k}}.
\end{equation}
\begin{lemma}\label{l.puns}
	For every $k\in\bN$, the functions $\lambda_{k}$ and $\rho_k$ have the following properties.\begin{enumerate}[label=\textup{(\roman*)},align=right,leftmargin=*,topsep=5pt,parsep=0pt,itemsep=2pt]
	\item\label{it.pun1} $\lambda_{k}$ bounds the distortion: \begin{equation}\label{e.lamdis}
		\bigl|d_\infty(f_k(u),f_k(v))-d_{j_k}(u,v)\bigr|\le \lambda_k(u)+\lambda_k(v),\ \text{ for all $u,v\in B_{j_k}(p_{j_k},k)$}.
	\end{equation}
	\item\label{it.pun2} $\rho_k(p_\infty,p_{j_k})=0$, $\rho_k(x,p_{j_k})=d_\infty(x,p_\infty)$ for every $x\in X_\infty$, and $\rho_k(p_\infty,y)=d_{j_k}(p_{j_k},y)$ for every $y\in X_{j_k}$.
	\item\label{it.pun3} $\rho_k$ is $1$-Lipschitz in each variable:\begin{equation}\label{e.pun31}
		|\rho_k(x,y)-\rho_k(x',y)|\le d_\infty(x,x'),\ \text{ for all }x,x{'}\in X_{\infty},\ y\in X_{j_{k}},
	\end{equation}  \begin{equation}\label{e.pun32}
		|\rho_k(x,y)-\rho_k(x,y')|\le d_{j_k}(y,y'),\ \text{ for all }y,y{'}\in X_{j_{k}},\ x\in X_{\infty}.
	\end{equation} 
\item\label{it.pun4} For all $x,x'\in X_\infty$ and $y,y'\in X_{j_k}$,
\begin{equation}\label{e.adm1}
d_\infty(x,x')\le \rho_k(x,y)+d_{j_k}(y,y')+\rho_k(x',y'),
\end{equation}
\begin{equation}\label{e.adm2}
d_{j_k}(y,y')\le \rho_k(x,y)+d_\infty(x,x')+\rho_k(x',y').
\end{equation}
\item\label{it.pun5} If $y\in B_{j_k}(p_{j_k},k)$, then $\rho_k(f_k(y),y)\le\lambda_k(y)\le\eta_k$.
\end{enumerate}
\end{lemma}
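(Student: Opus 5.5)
The plan is to verify the five items one at a time, directly from the defining formula \eqref{e.rhok} and the properties \eqref{e.fkdis0}--\eqref{e.fkdis} of $f_k$. Throughout, $B_k$ stands for $B_{j_k}(p_{j_k},k)$.

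For \ref{it.pun1}, split according to whether $u$ or $v$ equals $p_{j_k}$. If $u=v=p_{j_k}$, both sides vanish. In every other case $\lambda_k(u)+\lambda_k(v)\ge\eta_k$, and \eqref{e.fkdis} gives the bound. If exactly one of them, say $u$, equals $p_{j_k}$, this still suffices, since then $\lambda_k(v)=\eta_k$.

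For \ref{it.pun2}, first get the lower bound $\rho_k(x,p_{j_k})\ge d_\infty(x,p_\infty)$. By the triangle inequality,
\[
d_\infty(x,p_\infty)\le d_\infty(x,f_k(u))+d_\infty(f_k(u),f_k(p_{j_k})),
\]
and by \ref{it.pun1} together with $\lambda_k(p_{j_k})=0$,
\[
d_\infty(f_k(u),f_k(p_{j_k}))\le d_{j_k}(u,p_{j_k})+\lambda_k(u).
\]
Adding these shows each term of the infimum is at least $d_\infty(x,p_\infty)$. Choosing $u=p_{j_k}$ gives equality. The symmetric identity $\rho_k(p_\infty,y)=d_{j_k}(p_{j_k},y)$ follows the same way, now using
\[
d_{j_k}(p_{j_k},u)\le d_\infty(p_\infty,f_k(u))+\lambda_k(u).
\]
Setting $x=p_\infty$ gives $\rho_k(p_\infty,p_{j_k})=0$.

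Item \ref{it.pun3} holds because $\rho_k$ is an infimum, over $u$, of functions that are $1$-Lipschitz in $x$ (through the term $d_\infty(x,f_k(u))$) and in $y$ (through $d_{j_k}(u,y)$). Such an infimum is $1$-Lipschitz in each variable, and it is finite since $B_k\ne\emptyset$. Item \ref{it.pun5} is immediate from \eqref{e.rhok} with $u=y$, because the first and third terms then vanish.

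The main work is \ref{it.pun4}, the admissibility of the glued metric. For \eqref{e.adm1}, fix $\epsilon>0$ and pick $u,u'\in B_k$ nearly attaining $\rho_k(x,y)$ and $\rho_k(x',y')$. Chain the triangle inequality:
\[
d_\infty(x,x')\le d_\infty(x,f_k(u))+d_\infty(f_k(u),f_k(u'))+d_\infty(f_k(u'),x').
\]
Apply \ref{it.pun1} to the middle term:
\[
d_\infty(f_k(u),f_k(u'))\le d_{j_k}(u,u')+\lambda_k(u)+\lambda_k(u').
\]
Then bound $d_{j_k}(u,u')\le d_{j_k}(u,y)+d_{j_k}(y,y')+d_{j_k}(y',u')$. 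Regrouping gives
\[
d_\infty(x,x')\le\rho_k(x,y)+d_{j_k}(y,y')+\rho_k(x',y')+2\epsilon,
\]
and letting $\epsilon\downarrow0$ proves \eqref{e.adm1}. For \eqref{e.adm2}, the roles are swapped: write
\[
d_{j_k}(y,y')\le d_{j_k}(y,u)+d_{j_k}(u,u')+d_{j_k}(u',y'),
\]
bound $d_{j_k}(u,u')$ by $d_\infty(f_k(u),f_k(u'))+\lambda_k(u)+\lambda_k(u')$, and then split $d_\infty(f_k(u),f_k(u'))$ through $x$ and $x'$. The only real care needed anywhere is the one-sided distortion bound \eqref{e.lamdis}, which is exactly why $\lambda_k$ is placed in \eqref{e.rhok}. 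Once \ref{it.pun1} is established, the remaining items are bookkeeping.
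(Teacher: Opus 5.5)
Your proposal is correct and follows the paper's proof essentially step by step: the same case split for (i), the choice $u=p_{j_k}$ together with (i) at $v=p_{j_k}$ for (ii), infima of $1$-Lipschitz expressions for (iii), the same triangle-inequality chain through $f_k(u)$ and $f_k(u')$ combined with (i) for (iv), and $u=y$ for (v). The only cosmetic difference is in (iv): you pick $\epsilon$-near-minimizers, whereas the paper fixes arbitrary $u,v$ and takes infima afterwards, and these are equivalent.
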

\begin{proof}
	\begin{enumerate}[label=\textup{(\roman*)},align=right,leftmargin=*,topsep=5pt,parsep=0pt,itemsep=2pt]
	\item[\ref{it.pun1}] If $u=v=p_{j_k}$ the statement is trivial; if exactly one of $u,v$ is $p_{j_k}$, then the right-hand side is $\eta_k$; if neither is $p_{j_k}$, then the right hand side is $2\eta_k$, which is even larger by \eqref{e.fkdis}.
	\item[\ref{it.pun2}] 
	If $y=p_{j_k}$, then choose $u=p_{j_{k}}$ in \eqref{e.rhok} and we have $\rho_k(x,p_{j_k})\leq d_\infty(x,p_\infty)$. On the other hand, for every $u\in B_{j_k}(p_{j_k},k)$, \begin{align}
		 d_\infty(x,p_\infty)&\le d_\infty(x,f_k(u))+d_\infty(f_k(u),p_\infty)\text{ (triangle inequality)}\\
 &\overset{\eqref{e.lamdis}}{\le} d_\infty(x,f_k(u))+\lambda_k(u)+d_{j_k}(u,p_{j_k}) \text{ (choose $v=p_{j_{k}}$ in \eqref{e.lamdis})}
	\end{align}
	Taking infimum over $u\in B_{j_k}(p_{j_k},k)$, we see that $ d_\infty(x,p_\infty)\leq \rho_k(x,p_{j_k})$. 
	
	The identity $\rho_k(p_\infty,y)=d_{j_k}(p_{j_k},y)$ is proved in the same way. In particular, if $(x,y)=(p_\infty,p_{j_k})$, then $\rho_k(p_\infty,p_{j_k})=0$.
	\item[\ref{it.pun3}] For each $u\in B_{j_k}(p_{j_k},k)$, we have by triangle inequality that \begin{equation}
		 d_\infty(x,f_k(u))+\lambda_k(u)+d_{j_k}(u,y)
 \le d_\infty(x,x')+d_\infty(x',f_k(u))+\lambda_k(u)+d_{j_k}(u,y).
	\end{equation}
	Taking infimum over $u\in B_{j_k}(p_{j_k},k)$, we see that \begin{equation}
		\rho_k(x,y)-\rho_k(x',y)\le d_\infty(x,x');
	\end{equation}
	interchanging $x$ and $x'$ gives the reverse inequality. The proof in the $y$-variable is identical.
	\item[\ref{it.pun4}] First fix $u,v\in B_{j_k}(p_{j_k},k)$. Then
	\begin{align}
 &\phantom{\ \leq}d_\infty(x,x')\\
 &\le d_\infty(x,f_k(u))+d_\infty(f_k(u),f_k(v))+d_\infty(f_k(v),x') \\
 &\overset{\eqref{e.lamdis}}{\le} d_\infty(x,f_k(u))+d_{j_k}(u,v)+\lambda_k(u)+\lambda_k(v)+d_\infty(f_k(v),x') \\
 &\le d_\infty(x,f_k(u))+\lambda_k(u)+d_{j_k}(u,y)+d_{j_k}(y,y')+d_{j_k}(y',v)+\lambda_k(v)+d_\infty(f_k(v),x');
\end{align}
the first and third inequalities use triangle inequality. Taking the infimum first over $u$ and then over $v$ gives \eqref{e.adm1}.  The proof of \eqref{e.adm2} is symmetric. For $u,v\in B_{j_k}(p_{j_k},k)$,
\begin{align}
&\phantom{\ \leq}d_{j_k}(y,y')\\
 &\leq d_{j_k}(y,u)+d_{j_k}(u,v)+d_{j_k}(v,y') \\
&\overset{\eqref{e.lamdis}}{\le} d_{j_k}(y,u)+d_\infty(f_k(u),f_k(v))+\lambda_k(u)+\lambda_k(v)+d_{j_k}(v,y') \\
 &\leq d_{j_k}(y,u)+\lambda_k(u)+d_\infty(f_k(u),x)+d_\infty(x,x')+d_\infty(x',f_k(v))+\lambda_k(v)+d_{j_k}(v,y').
\end{align}
Taking infimum over $u$ and $v$ gives \eqref{e.adm2}. 
\item[\ref{it.pun5}] If $y\in B_{j_k}(p_{j_k},k)$, choosing $x=f_k(y)$ and $u=y$ in \eqref{e.rhok} gives $\rho_k(f_k(y),y)\le\lambda_k(y)$.
\qedhere
\end{enumerate}
\end{proof}

\begin{lemma}\label{l.Mdp}
 Define a set \begin{equation}
	{M}_{0}:=X_{\infty}\sqcup\bigsqcup_{k\in\bN}X_{j_{k}}.
\end{equation}
Define an equivalence relationship $\sim$ on $M_{0}$ by \begin{equation}
\begin{cases}
	x\sim x,\ \text{ for any $x\in {M}_{0}\setminus\{p_{\infty},p_{j_1},\ldots\}$},\\
	x\sim y,\ \text{ for every $x,y\in \{p_{\infty},p_{j_1},\ldots\}$}.
\end{cases}
\end{equation}
Let $M:={M}_{0}/\sim$ be the quotient space by identifying all basepoints $\{p_{\infty},p_{j_1},\ldots\}$ as a common point. Denote $p=[p_{j_{1}}]=\ldots=[p_{\infty}]\in M$. Define a function $d_{M}: M\times M\to[0,\infty)$ as follows.\begin{equation}\label{e.defdm}
	d_{M}([x],[y]):=\begin{dcases}
		\qquad d_{\infty}(x,y)\quad &\text{ if }(x,y)\in X_{\infty}\times X_{\infty},\\
		\qquad d_{j_{k}}(x,y)\quad &\text{ if }(x,y)\in X_{j_{k}}\times X_{j_{k}} \text{ for some $k\in\bN$},\\
		\qquad \rho_{k}(x,y)\quad &\text{ if }(x,y)\in X_{\infty}\times X_{j_{k}} \text{ for some $k\in\bN$},\\
		\qquad \rho_{k}(y,x)\quad &\text{ if }(x,y)\in  X_{j_{k}}\times X_{\infty} \text{ for some $k\in\bN$},\\
		\inf_{z\in X_\infty}\bigl(\rho_k(z,x)+\rho_{l}(z,y)\bigr) &\text{ if }(x,y)\in  X_{j_{k}}\times X_{j_{l}} \text{ for some $k,l\in\bN$}.
	\end{dcases}
\end{equation}

	Then the function $d_{M}: M\times M\to[0,\infty)$ is well-defined and is a metric on $M$.
\end{lemma}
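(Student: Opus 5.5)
The plan is to verify, in order, that $d_M$ is well-defined on equivalence classes, that it is finite, symmetric and nonnegative, that it separates points, and that it satisfies the triangle inequality. Every step uses only Lemma~\ref{l.puns}. Throughout I read the fifth line of \eqref{e.defdm} as applying only when $k\neq l$; for $k=l$ the second line applies.

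\textbf{Well-definedness.} The only ambiguity is at the class $p$, which has the representatives $p_\infty,p_{j_1},p_{j_2},\dots$. I will check that every formula gives the same value whichever representative is used.
\begin{itemize}
\item[(a)] For $x\in X_\infty$, Lemma~\ref{l.puns}-\ref{it.pun2} gives $\rho_k(x,p_{j_k})=d_\infty(x,p_\infty)$. So using $p_{j_k}$ instead of $p_\infty$ does not change the value.
\item[(b)] For $y\in X_{j_k}$, the same item gives $\rho_k(p_\infty,y)=d_{j_k}(p_{j_k},y)$.
\item[(c)] For $y\in X_{j_l}$ with $l\neq k$, I will show
\[
\inf_{z\in X_\infty}\bigl(\rho_k(z,p_{j_k})+\rho_l(z,y)\bigr)=\inf_{z\in X_\infty}\bigl(d_\infty(z,p_\infty)+\rho_l(z,y)\bigr)=\rho_l(p_\infty,y)=d_{j_l}(p_{j_l},y).
\]
The first equality is \ref{it.pun2}. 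The second holds because $z=p_\infty$ gives ``$\le$'', and the $1$-Lipschitz property \eqref{e.pun31} gives ``$\ge$''.
\end{itemize}
Hence all representatives of $p$ give the same value. Finiteness and nonnegativity are immediate, and symmetry is built into the definition.

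\textbf{Separation of points.} Take $x\in X_\infty$ and $y\in X_{j_k}$. In the infimum \eqref{e.rhok}, the term with $u=p_{j_k}$ equals $d_\infty(x,p_\infty)+d_{j_k}(p_{j_k},y)$, and every term with $u\neq p_{j_k}$ is at least $\lambda_k(u)=\eta_k$. Therefore
\[
\rho_k(x,y)\ge \min\bigl(\eta_k,\;d_\infty(x,p_\infty)+d_{j_k}(p_{j_k},y)\bigr),
\]
which is positive unless $x$ and $y$ are both basepoints, i.e.\ unless $[x]=[y]=p$. For $x\in X_{j_k}$ and $y\in X_{j_l}$ with $k\neq l$, apply this bound to each summand $\rho_k(z,x)$ and $\rho_l(z,y)$. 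The resulting lower bound is independent of $z$, so it survives the infimum. It gives $d_M([x],[y])>0$ unless $x$ and $y$ are both basepoints. Within a single space, separation is inherited from $d_\infty$ or $d_{j_k}$.

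\textbf{Triangle inequality.} I will prove $d_M(a,c)\le d_M(a,b)+d_M(b,c)$ by cases on which pieces contain $a,b,c$.
\begin{itemize}
\item All three in one space: this is the triangle inequality of that space.
\item Two points in one space and one in another: when only $X_\infty$ and a single $X_{j_k}$ are involved, the bound is either \eqref{e.adm1}/\eqref{e.adm2} (with one variable repeated, e.g.\ $y=y'$) or the $1$-Lipschitz bounds \eqref{e.pun31}/\eqref{e.pun32}.
\item Configurations passing through $X_\infty$ and using the fifth line: these follow directly from the infimum definition together with \eqref{e.pun31}.
\item Three distinct spaces $X_{j_k},X_{j_l},X_{j_m}$: take near-optimal $z$ and $z'$ in the infima for $d_M(a,b)$ and $d_M(b,c)$. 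By \eqref{e.adm1} with $y=y'=b$, we get $d_\infty(z,z')\le\rho_l(z,b)+\rho_l(z',b)$. By \eqref{e.pun31}, $\rho_m(z,c)\le d_\infty(z,z')+\rho_m(z',c)$. Combining,
\[
\rho_k(z,a)+\rho_m(z,c)\le \rho_k(z,a)+\rho_l(z,b)+\rho_l(z',b)+\rho_m(z',c).
\]
\item $a,c\in X_{j_k}$ and $b\in X_{j_l}$ with $l\neq k$: use \eqref{e.adm2} with the two points $z,z'$. Then bound $d_\infty(z,z')$ through $b$ exactly as in the previous item.
\end{itemize}
The main obstacle is only bookkeeping: covering every configuration, including $b$ in the middle or at an end and $k=l$ versus $k\neq l$, and making sure the identified basepoint is consistent with every case. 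The well-definedness step already settles the basepoint issue, so each case reduces to one of the displayed inequalities.
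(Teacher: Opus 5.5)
Your proposal is correct and follows essentially the paper's route. Well-definedness comes from Lemma~\ref{l.puns}-\ref{it.pun2}; your check (c) of the fifth line via \eqref{e.pun31} fills in a detail the paper leaves implicit. Separation comes from the fact that $\lambda_k$ takes only the values $0$ and $\eta_k$, and the triangle inequality is a case-by-case check using \eqref{e.pun31}--\eqref{e.adm2}.

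One attribution needs fixing. Take $a\in X_\infty$, $b\in X_{j_k}$, $c\in X_{j_l}$ with $k\neq l$. To bound $d_M([a],[c])=\rho_l(a,c)$ through $b$ you need, besides \eqref{e.pun31}, also \eqref{e.adm1} with $y=y'=b$, which gives $d_\infty(a,w)\le\rho_k(a,b)+\rho_k(w,b)$. This is exactly the combination in your three-spaces item; the infimum definition and \eqref{e.pun31} alone do not suffice.
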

\begin{proof}
	By Lemma \ref{l.puns}-\ref{it.pun2}, the function $d_{M}$ is independent of the choices of the representatives, and hence is well-defined.
	
	Symmetry and finiteness of $d_{M}$ are immediate from the definition. We prove the triangle inequality and then separation of points.
	
	Fix three points $[x]$, $[y]$ and $[z]$ in $M$.
	\begin{enumerate}[label=\textit{Case {\arabic*}.},align=right,leftmargin=*,topsep=5pt,parsep=0pt,itemsep=2pt]
	\item \textit{If $x,y,z\in X_{\infty}$ or if $x,y,z\in X_{j_{k}}$ for some $k\in\bN$.}
	
	 In this case, the triangle inequality follows from the triangle inequality on $(X_{\infty},d_{\infty})$ or on $(X_{j_{k}},d_{j_{k}})$.
	\item \textit{If $x,y\in X_\infty$ and $z\in X_{j_k}$ for some $k\in\bN$.}
	
	In this case, \begin{equation}
		d_{M}([x],[y])\overset{\eqref{e.defdm}}{=}d_{\infty}(x,y)\overset{\eqref{e.adm1}}{\leq}\rho_{k}(x,z)+\rho_{k}(y,z)\overset{\eqref{e.defdm}}{=}d_{M}([x],[z])+d_{M}([z],[y]).
	\end{equation}
	\begin{equation}
		d_{M}([x],[z])\overset{\eqref{e.defdm}}{=}\rho_{k}(x,z)\overset{\eqref{e.pun31}}{\leq} d_{\infty}(x,y)+\rho_{k}(y,z)\overset{\eqref{e.defdm}}{=}d_{M}([x],[y])+d_{M}([z],[y]).
	\end{equation}
	and similarly we have $d_{M}([y],[z])\leq d_{M}([y],[x])+d_{M}([x],[z])$.
	\item  \textit{If $x,y\in X_{j_k}$ for some $k\in\bN$ and $z\in X_\infty$.}
	
	In this case, \begin{equation}
		d_{M}([x],[y])\overset{\eqref{e.defdm}}{=}d_{j_{k}}(x,y)\overset{\eqref{e.adm2}}{\leq}\rho_{k}(z,x)+\rho_{k}(z,y)\overset{\eqref{e.defdm}}{=}d_{M}([x],[z])+d_{M}([z],[y]).
	\end{equation}
	\begin{equation}
		d_{M}([x],[z])\overset{\eqref{e.defdm}}{=}\rho_{k}(z,x)\overset{\eqref{e.pun32}}{\leq}\rho_{k}(z,y)+d_{j_{k}}(x,y)\overset{\eqref{e.defdm}}{=}d_{M}([x],[y])+d_{M}([z],[y]).
	\end{equation}
	and similarly we have $d_{M}([y],[z])\leq d_{M}([y],[x])+d_{M}([x],[z])$.
	\item \textit{If $x\in X_\infty$, $y\in X_{j_k}$, and $z\in X_{j_l}$ for some $k,l\in\bN$ with $k\neq l$.}
	
	In this case, 
	\begin{equation}
		d_{M}([y],[z])\overset{\eqref{e.defdm}}{\leq}\rho_{k}(x,y)+\rho_{l}(x,z)\overset{\eqref{e.defdm}}{=}d_{M}([x],[y])+d_{M}([x],[z])
	\end{equation}
	and \begin{equation}
		d_{M}([x],[y])\overset{\eqref{e.defdm}}{=}\rho_{k}(x,y)\overset{\eqref{e.pun32}}{\leq}\rho_{k}(x,z)+d_{j_{k}}(y,z)\overset{\eqref{e.defdm}}{=}d_{M}([x],[z])+d_{M}([y],[z])
	\end{equation}
	and similarly we have $d_{M}([x],[z])\leq d_{M}([x],[y])+d_{M}([y],[z])$.
	\item \textit{If $x\in X_{j_k}$, $y\in X_{j_l}$ and $z\in X_{j_m}$ for some $k,l,m\in\bN$.}
	
	In this case, for any $u,v\in X_{\infty}$, we have 
	\begin{align}
		d_{M}([x],[y])&\overset{\eqref{e.defdm}}{\leq} \rho_{k}(u,x)+\rho_{l}(u,y)\\
		&\overset{\eqref{e.pun31}}{\leq} \rho_{k}(v,x)+d_{\infty}(v,u)+\rho_{l}(u,y)\\
		&\overset{\eqref{e.adm1}}{\leq}\rho_{k}(v,x)+\rho_{m}(v,z)+\rho_{m}(u,z)+\rho_{l}(u,y)
	\end{align}
	Taking the infimum over $u$ and $v$ gives $d_M([x],[y])\le d_M([x],[z])+d_M([z],[y])$. Similarly, we get $d_M([x],[z])\le d_M([x],[y])+d_M([y],[z])$ and $d_M([y],[z])\le d_M([y],[x])+d_M([x],[z])$.
\end{enumerate}
It remains to prove that $d_M$ separates distinct points. Let $[x]$, $[y]\in M$. Suppose $d_{M}([x],[y])=0$. \begin{enumerate}[label=\textit{Case {\arabic*}.},align=right,leftmargin=*,topsep=5pt,parsep=0pt,itemsep=2pt]
	\item If $x,y\in X_{\infty}$ or if $x,y\in X_{j_{k}}$ for some $k\in\bN$, then $x=y$ because $d_{\infty}$ and $d_{j_{k}}$ separate distinct points.
	\item  If $x\in X_\infty$ and $y\in X_{j_k}$, then $0=d_M(x,y)=\rho_k(x,y)$.  Choose $u_n\in B_{j_k}(p_{j_k},k)$ such that $d_\infty(x,f_k(u_n))+\lambda_k(u_n)+d_{j_k}(u_n,y)\to0$ as $n\to\infty$.  Since $\lambda_k$ takes only the values $0$ and $\eta_k$, we must have $u_n=p_{j_k}$ for all large $n$.  Hence $d_\infty(x,p_\infty)\to0$ and $d_{j_k}(p_{j_k},y)\to0$, so $x=p_\infty$, $y=p_{j_k}$ and $[x]=[p_\infty]=[p_{j_k}]=[y]$.
	\item If $x\in X_{j_k}$ and $y\in X_{j_l}$ with $k\neq l$.  By \eqref{e.defdm}, there are $z_n\in X_\infty$ such that $\rho_k(z_n,x)+\rho_l(z_n,y)\to0$ as $n\to\infty$. By the definition of $\rho_k$ in \eqref{e.rhok}, we can choose $u_n\in B_{j_k}(p_{j_k},k)$ with
\[
 \lim_{n\to\infty}(d_\infty(z_n,f_k(u_n))+\lambda_k(u_n)+d_{j_k}(u_n,x))=0.
\]
Since $\lambda_k$ takes only the values $0$ and $\eta_k$, it follows that $u_n=p_{j_k}$ for all large $n$.  Hence $d_{j_k}(p_{j_k},x)\to0$, so $x=p_{j_k}$.  The same argument applied to $\rho_l(z_n,y)\to0$ gives $y=p_{j_l}$.  Thus $[x]=[p_{j_k}]=[p_{j_l}]=[y]$ in $M$.
\qedhere
\end{enumerate}
\end{proof}

Define two maps \begin{equation}\label{e.iotak}
	\iota_{j_{k}}:X_{j_{k}}\to M,\ X_{j_{k}}\ni x\mapsto [x]\in M,
\end{equation}
and
\begin{equation}\label{e.iotai}
	\iota_{\infty}:X_{\infty}\to M,\ X_{\infty}\ni x\mapsto [x]\in M.
\end{equation}
\begin{proof}[Proof of Theorem \ref{t.AW=GH}-\ref{it.GH>AW}]
	Let $(M,d_{M},p)$ be the pointed metric space defined in Lemma \ref{l.Mdp}. Let $\iota_{j_{k}}$ and $\iota_{\infty}$ be defined by \eqref{e.iotak} and \eqref{e.iotai}, respectively. Clearly, by \eqref{e.defdm}, $\iota_{j_{k}}$ and $\iota_{\infty}$ are isometries, and \begin{equation}
	\iota_{j_{k}}(p_{j_{k}})=[p_{j_{k}}]=p\in M\ \text{ and }\ \iota_{\infty}(p_{\infty})=[p_{\infty}]=p\in M.
\end{equation}
Set $F_\infty:=\iota_\infty(X_\infty)$ and $F_k:=\iota_{j_k}(X_{j_k})$.  We prove that $F_k\to F_\infty$ locally in the Hausdorff sense.

From Lemma \ref{l.puns}-\ref{it.pun5}, 
\begin{equation}\label{eq:bridge-close-clean}
d_M\bigl(\iota_\infty(f_k(y)),\iota_{j_k}(y)\bigr)\overset{\eqref{e.defdm}}{=}\rho_k(f_k(y),y)\le\eta_k,\ \text{for every $y\in B_{j_k}(p_{j_k},k)$}.
\end{equation}

First fix $S>0$ and work in the ball $B_M(p,S)$.  If $k>S$ and $[z]\in F_k\cap B_M(p,S)$, then, because $\iota_{j_k}$ is isometric, $d_{j_k}(p_{j_k},z)<S<k$.  Hence $z\in B_{j_k}(p_{j_k},k)$.  By \eqref{eq:bridge-close-clean}, $\dist_M(z,F_\infty)\le\eta_k$.  Therefore
\begin{equation}\label{eq:Fk-to-Finf-clean}
\limsup_{k\to\infty}\sup_{z\in F_k\cap B_M(p,S)}\dist_M(z,F_\infty)\leq\lim_{k\to\infty}\eta_k=0.
\end{equation}
Conversely, let $k$ large enough such that $k-\eta_k>S$. Let $[z]\in F_\infty\cap B_M(p,S)$.  Then $z\in B_\infty(p_\infty,k-\eta_k)$.  By \eqref{e.fkdis1}, we can choose $y\in B_{j_k}(p_{j_k},k)$ such that $d_\infty(z,f_k(y))<\eta_k$. Therefore,
\[
 d_M(z,y)\overset{\eqref{e.defdm}}{=}\rho_k(z,y)\overset{\eqref{e.rhok}}{\le} d_\infty(z,f_k(y))+\lambda_k(y)<\eta_k+\eta_k=2\eta_k.
\]
Thus
\begin{equation}\label{eq:Finf-to-Fk-clean}
\limsup_{k\to\infty}\sup_{z\in F_\infty\cap B_M(p,S)}\dist_M(z,F_k)\leq \lim_{k\to\infty}2\eta_k=0.
\end{equation}
Now let $q\in M$ and $R\in(0,\infty)$ be arbitrary.  Put $S:=d_M(p,q)+R$.  Then $B_M(q,R)\subset B_M(p,S)$.  The estimates \eqref{eq:Fk-to-Finf-clean} and \eqref{eq:Finf-to-Fk-clean}, applied with this value of $S$, imply
\[
\limsup_{k\to\infty}\sup_{y\in F_k\cap B_M(q,R)}\dist_M(y,F_\infty)=0,
\ \text{ and }\
\limsup_{k\to\infty}\sup_{x\in F_\infty\cap B_M(q,R)}\dist_M(x,F_k)=0.
\]
This shows that $(X_{j_{k}},d_{j_{k}},p_{j_{k}})\xrightarrow{\mathrm{AW}}(X_{\infty},d_{\infty},p_{\infty})$.
\end{proof}
\end{appendices}
\noindent \textbf{Acknowledgments.} I am very grateful to Professor Sylvester Eriksson-Bique for valuable discussions and suggestions concerning the proof of Theorem \ref{t.main}. I thank Professor Mathav Murugan for his helpful comments and for the reference \cite{LKP04}. The author is supported by the research grant (VIL73729) from Villum Fonden.

\vspace{10pt}
\noindent Aobo Chen\orcidlink{0009-0004-0712-7941}

\vspace{3pt}
\noindent Department of Mathematics, Aarhus University, 8000 Aarhus C, Denmark
\vspace{3pt}

\noindent \texttt{aobochen.math@hotmail.com}, \texttt{aobochen@math.au.dk}

\end{document}